\title[]{Global energy minimizers for a diffusion-aggregation model on sphere }
\author[Fetecau]{Razvan C. Fetecau}
\address[Razvan C. Fetecau]{\newline Department of Mathematics, Simon Fraser University, 8888 University Dr., Burnaby, BC V5A 1S6, Canada}
\email{van@math.sfu.ca}
\author[Park]{Hansol Park}
\address[Hansol Park]{\newline Department of Mathematics, National Tsing Hua University, Section 2, Kuang-Fu Road, Hsinchu 30013, Taiwan}
\email{hansolpark@math.nthu.edu.tw}
\author[Vaidya]{Vishnu Vaidya}
\address[Vishnu Vaidya]{\newline Department of Mathematics, Indian Institute of Science Education and Research, Dr. Homi Bhabha Road, Pune 411008, India}
\email{vishnu.vaidya@students.iiserpune.ac.in}
\newtheorem{theorem}{Theorem}[section]
\newtheorem{lemma}{Lemma}[section]
\newtheorem{proposition}{Proposition}[section]
\newtheorem{remark}{Remark}[section]
\newcommand{\bbr}{\mathbb R}
\newcommand{\bbs}{\mathbb S}
\newcommand{\calA}{\mathcal{A}}
\newcommand{\calF}{\mathcal{F}}
\newcommand{\calP}{\mathcal{P}}
\def\d{\mathrm{d}}
\newcommand{\dS}{\mathrm{d}S} % area measure on sphere
\newcommand{\dx}{\mathrm{d}S (x)}
\newcommand{\dy}{\mathrm{d}S (y)}
\newcommand{\rhou}{\rho_{\text{uni}}}
\newcommand{\rhoeps}{\rho^\epsilon}
\newcommand{\dm}{d} % dimension
\begin{document}

\subjclass[2020]{35A15, 35B38, 58K05, 82D60}
\keywords{free energy, global minimizers, nonlinear diffusion, phase transitions, dipolar potential, polymer orientation}

\begin{abstract}
We investigate the ground states of a free energy functional on sphere. The energy consists of an entropy and a nonlocal interaction term that are in competition with each other, as they favour spreading and aggregation, respectively.  Specifically, the entropy corresponds to slow nonlinear diffusion and the interaction term is modeled by a quadratic interaction potential. We investigate the transitions that occur in the equilibria and the global minimizers of the energy, in terms of the strength of the nonlocal attractive interactions. We consider separately various ranges of the diffusion exponent, which give qualitatively different behaviours of equilibria and ground states. In terms of applications, we note that the energy we consider here is a generalization to nonlinear diffusion of the Onsager free energy with dipolar potential, used to study phase transitions in polymer orientation.
\end{abstract}

\maketitle 
%\tableofcontents

\section{Introduction}
\label{sect:intro}
In this paper we investigate the minimizers of the free energy functional
\begin{equation}
\label{energy-sphere}
E[\rho]=\frac{1}{m-1}\int_{\bbs^\dm}\rho(x)^m \dx +\frac{\kappa}{4}\iint_{\bbs^\dm\times \bbs^\dm}\|x-y\|^2 \rho(x)\rho(y)\dx \dy,
\end{equation}
defined on the space $\mathcal{P}_{ac}(\bbs^\dm)$ of absolutely continuous probability measures\footnote{Note that throughout the paper we refer to an absolutely continuous measure directly by its density, and by an abuse of notation we write $\rho\in \calP_{ac}(\bbs^\dm)$ instead of $\d \rho  = \rho \,\d S \in \calP_{ac}(\bbs^\dm)$.} on the unit sphere $\bbs^\dm$.  Here, $m>0$ is the diffusion exponent ($m=1$ requires special consideration), $\kappa>0$ represents the interaction strength, and $\|\cdot\|$ denotes the Euclidean distance in $\bbr^{\dm+1}$. Also, the integration is with respect to the surface area measure $\dS$ of the sphere, and the notion of absolute continuity is with respect to the measure $\dS$.

The functional \eqref{energy-sphere} falls in a class of aggregation-diffusion energies extensively studied in various contexts, in particular in self-organizing phenomena such as swarming or flocking of biological organisms \cite{keller1970initiation, patlak1953random, TBL}, emergent behaviour in robotic swarms \cite{Markdahl_etal2018, OlfatiSaber2006}, self-assembly of nanoparticles \cite{HoPu2006}, and opinion formation \cite{MotschTadmor2014}.  In its general form, the energy \eqref{energy-sphere} is defined on probability measures on a generic Riemannian manifold $M$: 
\begin{equation}
\label{eqn:energy-M}
E[\rho]=\frac{1}{m-1} \int_M \rho(x)^m \d x +\frac{1}{2}\iint_{M \times M} W(x, y)\rho(x)\rho(y)\d x \d y,
%E[\rho]=\int_M \rho(x)\log\rho(x)\dx +\frac{1}{2}\iint_{M \times M} W(x, y)\rho(x)\rho(y)\dx \dy,
\end{equation}
where $W: M \times M \to \bbr$ is an interaction potential, and integration is with respect to the Riemannian volume measure $\d x$.  The energy \eqref{eqn:energy-M} relates to the following nonlinear nonlocal evolution equation for the population density $\rho$:
\begin{equation}
\label{eqn:model}
\partial_t\rho(x)- \nabla_M \cdot(\rho(x)\nabla_M W*\rho(x))=\Delta \rho^m(x),
\end{equation}
where
\[
W*\rho(x)=\int_M W(x, y)\rho(y) \d y,
\]
and $\nabla_M \cdot$ and $\nabla_M $ represent the Riemannian divergence and gradient, respectively \cite{AGS2005}. Specifically, critical points of the energy functional correspond to steady states of \eqref{eqn:model}. We aso note that for $m=1$, which corresponds to linear diffusion, the entropy term in \eqref{eqn:energy-M} is replaced by $\int_M \rho(x) \log \rho(x) \d x$. 

Both the free energy functional \eqref{eqn:energy-M} and its corresponding gradient flow \eqref{eqn:model} have been extensively studied on the Euclidean space $M=\bbr^\dm$. A partial list of issues addressed by analysts includes the existence of global energy minimizers for interaction potentials of Riesz type \cite{CalvezCarrilloHoffmann2017,CaHoMaVo2018,CaDePa2019}, uniqueness and qualitative properties (such as monotonicity and radial symmetry) of energy minimizers or steady states of \eqref{eqn:model}  \cite{BuFeHu14,Kaib17,CaHiVoYa2019,DelgadinoXukaiYao2022}, and well-posedness and long time behaviour of solutions to the evolution equation \eqref{eqn:model} \cite{CarrilloCraigYao2019, CaHiVoYa2019, CaFeGo2024}. We also refer here to the review papers \cite{CaMcVi2006,CarrilloCraigYao2019} and the influential monograph \cite{AGS2005}. The literature on general manifolds is much less developed however. The case when $M$ is a Cartan--Hadamard manifold was studied recently in \cite{FePa2024b, FePa2024a} for linear diffusion and in \cite{CaFePa2025a, CaFePa2025b} for nonlinear diffusion. As far as the manifold setup is concerned, most of the research has focused in fact on the model without the diffusion term (when only nonlocal interactions are considered). The well-posedness of the model without diffusion on manifolds  was studied in \cite{FePaPa2020, FePa2021, WuSlepcev2015}, and emergent behaviours were studied on a variety of specific manifolds such as sphere \cite{HaChCh2014, FeZh2019, FePaPa2020}, unitary matrices \cite{HaKoRy2018, Lohe2009}, hyperbolic space \cite{FePa2023a, FeZh2019, Ha-hyperboloid}, the special orthogonal group \cite{FeHaPa2021} and Stiefel manifolds \cite{HaKaKi2022}, as well as on general Riemannian manifolds of bounded curvature \cite{FePa2023b}.

In the present paper we consider the case
\begin{equation}
\label{eqn:M-and-W}
M=\bbs^{\dm},\qquad\text{ and }\quad W(x,y)=\frac{\kappa}{2}\|x-y\|^2.
\end{equation}
The interactions corresponding to this potential are purely {\em attractive}, i.e., any two points experience a pairwise attractive interaction. Consequently, the interaction energy favours points to aggregate together. On the other hand, the entropy component favours spreading, so the two components of the energy \eqref{energy-sphere} compete with each other. On compact manifolds such as the sphere, diffusion by itself leads to global energy minimizers that are uniform densities on the entire space. Together with the attractive interactions, diffusion may still dominate (and lead to uniform states) if the attraction is sufficiently weak. The main interest in this paper is to study the competition between attraction and diffusion in terms of the size of the interaction strength $\kappa$.

Apart from its own intrinsic interest as an aggregation-diffusion energy on the sphere, one of the main motivations for this work is to consider nonlinear diffusion in the Onsager free energy on the sphere \cite{Onsager1949}. The free energy \eqref{energy-sphere} for the special case $m=1$ -- i.e., with the entropic term given by $ \displaystyle \int_{\bbs^\dm} \rho(x) \log \rho(x) \dx$ -- has been used to study isotropic-nematic phase transitions in rod-like polymers and has an extensive literature on its own (see \cite{ConstantinKevrekidisTiti2004, fatkullin2005critical} and the references therein). In this context, $\rho(x)$ represents the probability distribution function for the orientation of a polymer viewed as a rigid rod of unit vector $x \in \bbs^2$, and the interaction potential in \eqref{eqn:M-and-W} is called the dipolar potential; note that since points on $\bbs^\dm$ have unit norm, an equivalent expression of the potential used in this literature is $-\kappa \, x \cdot y$.  It was shown for $\dm=2$ \cite{fatkullin2005critical}, and later generalized to arbitrary dimension in \cite{FrouvelleLiu2012, DegondFrouvelleLiu2014} that a phase transition occurs at a certain critical value of $\kappa$, where the isotropic state (given by the uniform distribution) loses stability to a nematic equilibrium density.

In this paper we work with general exponent $m>1$, which corresponds to slow nonlinear diffusion, and investigate the ground states of the energy functional \eqref{energy-sphere} in terms of the interaction strength $\kappa$. Depending on the values of $m$ (we distinguish between the cases $1<m<2$, $m=2$ and $m>2$), we identify various critical values of $\kappa$ that lead to phase transitions in the ground states, similar to the isotropic-nematic transitions for the Onsager energy. We also note that, different from the case of linear diffusion, we also account here for equilibria that have support strictly included in $\bbs^\dm$. This adds a new type of transition, where ground states change from being fully supported to being strictly supported on the sphere. For the range $1<m\leq 2$ we characterize the global energy minimizers and their phase transitions, in general dimension $\dm \geq 1$. For $m>2$, we carry out our investigations only in dimension $\dm =2$, which in fact is the most important case as far as applications to phase transitions in polymer orientations are concerned.

%\textcolor{red}{HP: dependent mobility  will be added. Some physical meaning of nonlinear diffusion.}

%... discuss density dependent mobility ... interaction strength versus inverse temperature ... dipolar potential (as used in this paper) versus Maier-Saupe potential (which does not distinguish between different rod orientations). With linear diffusion: Fokker-Planck equation ...  the interaction potential models excluded volume effects due to steric forces ... the gradient flow of the Onsager function is sometimes referred to as Doi or Smoluchowski equation

We also note that on top of the numerous applications to self-collective behaviour in sciences and engineering, there have been recent interest in such models in the context of inverse problems and machine learning \cite{Bongini_etal2017,huilearning, RotskoffEijnden2022}. In particular, some of these works include manifold setups as in the present paper \cite{Maggioni-etal21}. Very recently, an interacting particle system was used in the context of artificial intelligence, more specifically for large language models \cite{Geshkovski_etal2025}. In this application, the phenomena of clustering and synchronization is important for learning tasks. We point out that the system used in \cite{Geshkovski_etal2025} is set up on the unit sphere of general dimension, and the dependence of solutions on the relative strength of the noise and diffusion is listed as an interesting question that remains to be investigated. 
%Finally, we mention that minimizing discrete interaction energies is an active topic of research in approximation theory \cite{Anderson_etalAMS, Bilyk_etalJFA, HardinSaff2005}.

%Free energy with linear diffusion:
%\[
%E_1[\rho]=\int_{\bbs^\dm}\rho(x)\log \rho(x)\dx +\frac{\kappa}{4}\iint_{\bbs^\dm\times\bbs^\dm}\|x-y\|^2\rho(x)\rho(y)\dx \dy\\
%\]

{\em Summary.} The summary of this paper is as follows. In Section \ref{sect:cp} we identify the possible critical points of the energy functional. In Section \ref{sect:unif-distr} we focus on the uniform density and study its stability. In Section \ref{sect:bifurcations-mg1} we investigate the critical values of $\kappa$, the equilibria, and the global energy minimizers when the diffusion exponent is in the range $1<m<2$. Sections \ref{sect:m2} and \ref{sect:bifurcations-mg2} are similar in scope, and they cover the cases $m=2$ and $m>2$, respectively. Some technical details of the results are deferred to the Appendix.

%%%%%%%%%%%%%%%
\section{Critical points of the energy}
\label{sect:cp}
In this section we identify the critical points of the energy functional, which are the candidates for ground states. For any $x,y\in \bbs^\dm$, we have 
\[
\|x-y\|^2=\|x\|^2-2\langle x, y\rangle+\|y\|^2=2-2\langle x, y\rangle,
\] 
which allows to simplify the energy as
\begin{equation}
\label{eqn:energy-cmform}
E[\rho]= \frac{1}{m-1}\int_{\bbs^\dm}\rho(x)^m \dx-\frac{\kappa}{2}\iint_{\bbs^{\dm}\times \bbs^{\dm}}\langle x, y\rangle \rho(x)\rho(y)\dS(x)\dS(y)+\frac{\kappa}{2}.
\end{equation}
The centre of mass $c_\rho \in \bbr^{\dm+1}$ of a density $\rho\in \calP_{ac}(\bbs^\dm)$ is defined as
\begin{equation}
\label{eqn:cm}
c_\rho=\int_{\bbs^{\dm}}x\rho(x) \dS(x).
\end{equation}
It holds that
\begin{equation}
%\label{eqn:cm-normsq}
\begin{aligned}
\|c_\rho\|^2 &=\left\langle
\int_{\bbs^{\dm}}x\rho(x) \dS(x), \int_{\bbs^{\dm}}y\rho(y) \dS(y)
\right\rangle \\[2pt]
&=\iint_{\bbs^\dm\times \bbs^\dm}\langle x, y\rangle \rho(x)\rho(y)\dS(x)\dS(y),
\end{aligned}
\end{equation}
and hence, the energy \eqref{energy-sphere} can be written as
\begin{equation}
\label{eqn:energy-s}
E[\rho]= \frac{1}{m-1}\int_{\bbs^\dm}\rho(x)^m \dx -\frac{\kappa}{2}\|c_\rho\|^2+\frac{\kappa}{2}.
\end{equation}

The Euler--Lagrange equation for the functional \eqref{eqn:energy-s} is given by
\begin{equation}
\label{eqn:equil}
\left(\frac{m}{m-1}\right)\rho(x)^{m-1}-\kappa \langle c_\rho, x\rangle = \lambda, \qquad x\in \mathrm{supp}(\rho),
\end{equation}
for some constant $\lambda$.  We first note that the uniform distribution on the sphere, 
\begin{equation}
\label{eqn:rho-uni}
\rhou(x) = \frac{1}{|\bbs^\dm|}, \qquad \forall x \in \bbs^\dm,
\end{equation}
where $|\bbs^\dm|$ denotes the area of $\bbs^\dm$, is a solution of \eqref{eqn:equil} for all $\kappa>0$. Indeed, in this case, $c_{\rhou} =0$, and the constant $\lambda$ is given by
\begin{equation}
\label{eqn:lambda-uni}
\lambda_{\text{uni}} = \left(\frac{m}{m-1}\right) \frac{1}{|\bbs^\dm|^{m-1}}.
\end{equation}

Our main goal is to identify the global energy minimizers, among the critical points satisfying \eqref{eqn:equil}. The global minimizers must be radially symmetric, a property that can be inferred from the following argument. Consider a global minimizer $\rho$, and take a rotation $R:\bbs^\dm\to\bbs^\dm$ that preserves its centre of mass $c_\rho$. Note that the center of $\rho'=R_\#\rho$ is also $c_\rho$ (i.e., $c_{\rho'}=c_{\rho}$), and $E[\rho']=E[\rho]$. Then, any convex combination of $\rho$ and $\rho'$,
\[
\rho_\alpha=(1-\alpha)\rho+\alpha\rho', \qquad \text{ for } 0<\alpha<1,
\]
yields
\[
E[\rho_\alpha]<(1-\alpha)E[\rho]+\alpha E[\rho']=E[\rho],
\]
whenever $\rho\neq \rho'$; here we used the strict convexity of the entropy and the fact that $c_\rho=c_{\rho'}=c_{\rho_\alpha}$ for all $0<\alpha<1$. Therefore, to be a global minimizer, we need $\rho=\rho'$ 
 and hence, $\rho$ is invariant under rotations that preserve its centre of mass. We infer that any global minimizer $\rho$ must have rotational symmetry.
  
%{\color{red} HP: $\rho'=R_{\#}\rho$ is equal to $\rho$ for any rotation $R:\bbs^\dm\to\bbs^\dm$ preserving the centre of mass vector ($c_\rho$). In other words, $\rho(x)=\rho(y)$ if and only if $\dist(x,v)=\dist(y,v)$ for $v=c_\rho/\|c_\rho\|$. This is why $\rho$ has the rotational symmetry.}

We now investigate the radially symmetric solutions of \eqref{eqn:equil}. We find that \eqref{eqn:equil} admits two types of solutions: equlibria supported on the entire sphere ($\mathrm{supp}(\rho)=\bbs^\dm$), and equilibria that are supported on a strict subset of the sphere ($\mathrm{supp}(\rho) \subsetneq \bbs^\dm$). Write the equilibria as
\[
\rho(x)=\begin{cases}
\left(\frac{m-1}{m} \right)^{\frac{1}{m-1}} \left(\lambda+\kappa\langle c_\rho, x\rangle\right)^{\frac{1}{m-1}},\quad &\text{for }x\in \mathrm{supp}(\rho),\\[5pt]
0,&\text{otherwise}.
\end{cases}
\]
Note that for $x\in \mathrm{supp}(\rho)$, $\lambda+\kappa\langle c_\rho, x\rangle\geq0$.

Without loss of generality, we can assume $c_\rho=\|c_\rho\| x_0$ for some unit vector $x_0 \in \bbs^\dm$. Define $\theta_x=\arccos\langle x_0, x\rangle \in [0,\pi]$, to write $\langle c_\rho, x\rangle=\|c_\rho\|\cos\theta_x$. The condition $\lambda+\kappa\langle c_\rho, x\rangle\geq0$ on $\mathrm{supp}(\rho)$ can be simplified into
\begin{equation}
\label{ineq:cos}
-\frac{\lambda}{\kappa\|c_\rho\|}\leq\cos\theta_x, \qquad \forall x \in \mathrm{supp}(\rho).
\end{equation}
Given that $\cos \theta_x$ ranges in $[-1,1]$, we now distinguish the two types equilibria, in terms of the relative size of $\lambda$. Note that \eqref{ineq:cos} has no solutions if $\lambda\leq -\kappa\|c_\rho\|$.

{\em a. Equilibria fully supported on $\bbs^\dm$ ($\lambda \geq  \kappa \|c_\rho\|)$.} In this case $-\frac{\lambda}{\kappa\|c_\rho\|}\leq -1$, so $\theta_x$ has full range $[0,\pi]$. The equilibrium is given by
\begin{equation}
\label{eqn:equil-fs}
\rho(x)=\left(\frac{m-1}{m}\right)^{\frac{1}{m-1}}\left(\lambda+\kappa\|c_\rho\|\cos\theta_x\right)^{\frac{1}{m-1}},\qquad\forall x\in \bbs^\dm.
\end{equation}
To determine $\rho$ one needs to find $\lambda$ and $\|c_\rho\|$. These can be found by requiring that $\rho$ has unit mass and centre of mass at $c_\rho$, as following.

From the definition of $c_\rho$ we compute
\begin{equation}
\label{eqn:cm-sq}
\begin{aligned}
\|c_\rho\|^2&=\left\langle c_\rho, \int_{\bbs^\dm}x\rho(x)\dS(x)\right\rangle\\
&= \int_{\bbs^{\dm}}\langle c_\rho, x\rangle \rho(x)\dS(x)\\
&=\dm w_{\dm}\|c_\rho\|\int_0^\pi \cos\theta \left(\frac{m-1}{m}\right)^{\frac{1}{m-1}}\left(\lambda+\kappa\|c_\rho\|\cos\theta\right)^{\frac{1}{m-1}}\sin^{\dm-1}\theta\d\theta,
\end{aligned}
\end{equation}
where for the last equal sign we used hyperspherical coordinates and that $|\bbs^{\dm-1}|=\dm w_{\dm}$, where $w_{\dm}$ denotes the volume of the $\dm$-dimensional unit ball. Together with the unit mass condition, this leads to the following two equations to be solved for $\lambda$ and $\|c_\rho\|$:
\begin{equation}
\label{eqn:system-fs-mg1}
\begin{aligned}
1&=\dm w_\dm\int_0^\pi \left(\frac{m-1}{m}\right)^{\frac{1}{m-1}}\left(\lambda+\kappa\|c_\rho\|\cos\theta\right)^{\frac{1}{m-1}}\sin^{\dm-1}\theta \, \d\theta, \\
\|c_\rho\|& =\dm w_\dm\int_0^\pi \left(\frac{m-1}{m}\right)^{\frac{1}{m-1}}\left(\lambda+\kappa\|c_\rho\|\cos\theta\right)^{\frac{1}{m-1}}\sin^{\dm-1}\theta \cos\theta \, \d\theta.
\end{aligned}
\end{equation}
\medskip

{\em b. Equilibria supported on a strict subset of $\bbs^\dm$ ($-\kappa \|c_\rho\| < \lambda < \kappa \|c_\rho\|$).} In this case we have $-1<-\frac{\lambda}{\kappa\|c_\rho\|}<1$, and \eqref{ineq:cos} restricts $\theta_x$ to $\left[0,\arccos\left(-\frac{\lambda}{\kappa\|c_\rho\|}\right)\right]$. The equilibrium $\rho$ can be expressed as
\begin{equation}
\label{eqn:equil-cs}
\rho(x)=\begin{cases}
\left(\frac{m-1}{m}\right)^{\frac{1}{m-1}}\left(\lambda+\kappa\|c_\rho\|\cos\theta_x\right)^{\frac{1}{m-1}},\qquad&\text{ if }0\leq\theta_x\leq \arccos\left(-\frac{\lambda}{\kappa\|c_\rho\|}\right),\\[5pt]
0,\qquad&\text{ otherwise}.
\end{cases}
\end{equation}

Again, $\lambda$ and $\|c_\rho\|$ need to be found. Denote by $\phi = \arccos\left(-\frac{\lambda}{\kappa\|c_\rho\|}\right)$. Then, similar to how system \eqref{eqn:system-fs-mg1} was derived, we find in this case:
\begin{equation}
\label{eqn:system-cs-mg1}
\begin{aligned}
1&=\dm w_\dm\int_0^\phi \left(\frac{m-1}{m}\right)^{\frac{1}{m-1}}\left(\lambda+\kappa\|c_\rho\|\cos\theta\right)^{\frac{1}{m-1}}\sin^{\dm-1}\theta \, \d\theta, \\
\|c_\rho\|& =\dm w_\dm\int_0^\phi \left(\frac{m-1}{m}\right)^{\frac{1}{m-1}}\left(\lambda+\kappa\|c_\rho\|\cos\theta\right)^{\frac{1}{m-1}}\sin^{\dm-1}\theta \cos\theta \, \d\theta,
\end{aligned}
\end{equation}
which has to be solved for $\lambda$ and $\|c_\rho\|$.

\begin{remark}
\label{rmk:linear-diff}
We note that for the free Onsager functional with linear diffusion, all critical points are fully supported on the sphere \cite{fatkullin2005critical, FrouvelleLiu2012}. In considering nonlinear diffusion, the set of admisible equilibria is richer, which leads to more sophisticated phase transitions, in particular in the case $m>2$.
\end{remark}

%\textcolor{blue}{TO DO: Present linear diffusion energy, and its Euler-Lagrange and critical points.}
%%%%%%%%%%%%%%%

\section{Stability of the uniform distribution}
\label{sect:unif-distr}

In this section we will study the full nonlinear stability of the uniform distribution $\rhou$ given by \eqref{eqn:rho-uni}. Consider a general family of perturbations $\{ \rho^\epsilon \} \subset \calP_{ac}(\bbs^\dm) $ of the uniform distribution. In particular, we have 
\begin{equation}
\label{rho-eps-cond}
\rho^0 = \rhou, \qquad \int_{\bbs^\dm} \rhoeps(x) \dx  = 1, \quad \text{ for all } \epsilon.
\end{equation}
The energy of $\rho^\epsilon$ is given by (use \eqref{eqn:energy-cmform}):
\[
E[\rhoeps]= \frac{1}{m-1}\int_{\bbs^\dm}\rhoeps(x)^m \dx-\frac{\kappa}{2}\iint_{\bbs^{\dm}\times \bbs^{\dm}}\langle x, y\rangle \rhoeps(x)\rhoeps(y)\dS(x)\dS(y)+\frac{\kappa}{2}.
\]
%\[
%E[\rhoeps]=\frac{1}{m-1}\int_{\bbs^\dm}\rhoeps(x)^m \dx +\frac{\kappa}{4}\iint_{\bbs^\dm\times \bbs^\dm}\|x-y\|^2\rhoeps(x)\rhoeps(y)\dx \dy.
%\]
The uniform distribution is a local minimizer provided 
\[
\frac{\d^2}{\d \epsilon^2} E[\rhoeps]_{\big|_{\epsilon = 0}}\geq 0,
\]
for all perturbations $\rho^\epsilon$ that satisfy \eqref{rho-eps-cond}.

Denote 
\[
\delta \rhoeps = \frac{\d }{\d \epsilon} \rhoeps, \qquad \text{ and } \qquad \delta^2 \rhoeps = \frac{\d^2 }{\d \epsilon^2} \rhoeps.
\]
Note that by the unit mass condition in \eqref{rho-eps-cond}, we have
\begin{equation}
\label{eqn:zero-int}
\int_{\bbs^\dm} \delta \rhoeps(x) \dx  = 0, \qquad \text{ and } \qquad \int_{\bbs^\dm} \delta^2 \rhoeps(x) \dx  = 0, \qquad \textrm{ for all } \epsilon.
\end{equation}

Now compute:
\begin{align*}
\frac{\d}{\d \epsilon} E[\rhoeps] &= \frac{1}{m-1}\int_{\bbs^\dm} m \rhoeps(x)^{m-1} \delta \rhoeps(x) \dx -\frac{\kappa}{2}\iint_{\bbs^\dm\times \bbs^\dm} \langle x,y \rangle \left( \delta \rhoeps(x)\rhoeps(y) + \rhoeps(x) \delta \rhoeps(y) \right) \dx \dy \\
&= \frac{1}{m-1}\int_{\bbs^\dm} m \rhoeps(x)^{m-1} \delta \rhoeps(x) \dx -\kappa \iint_{\bbs^\dm\times \bbs^\dm} \langle x,y \rangle \, \rhoeps(x) \delta \rhoeps(y) \dx \dy,
\end{align*}
%\[
%\frac{\d}{\d \epsilon} E[\rhoeps] = \frac{1}{m-1}\int_{\bbs^\dm} m \rhoeps(x)^{m-1} \delta \rhoeps(x) \dx +\frac{\kappa}{4}\iint_{\bbs^\dm\times \bbs^\dm}\|x-y\|^2 \left( \delta \rhoeps(x)\rhoeps(y) + \rhoeps(x) \delta \rhoeps(y) \right) \dx \dy,
%\]
and
\begin{equation}
\label{eqn:d2Eeps}
\begin{aligned}
\frac{\d^2}{\d \epsilon^2} E[\rhoeps] &= m \int_{\bbs^\dm} \rhoeps(x)^{m-2} \left( \delta \rhoeps(x)\right)^2 \dx + \frac{m}{m-1}\int_{\bbs^\dm} \rhoeps(x)^{m-1} \delta^2 \rhoeps(x) \dx \\[5pt]
& \quad -\kappa \iint_{\bbs^\dm\times \bbs^\dm} \langle x,y \rangle (\delta \rhoeps(x) \delta \rhoeps(y) + \rhoeps(x) \delta^2 \rhoeps(y) ) \dx \dy
%&\quad +\frac{\kappa}{4}\iint_{\bbs^\dm\times \bbs^\dm}\|x-y\|^2 \left( \delta^2 \rhoeps(x)\rhoeps(y) + 2 \delta \rhoeps(x) \delta \rhoeps(y) + \rhoeps(x) \delta^2 \rhoeps(y)\right) \dx \dy.
\end{aligned}
\end{equation}

Evaluate \eqref{eqn:d2Eeps} at $\epsilon = 0$.  Using that $\rho^0(x)$ is constant and that $\delta^2 \rhoeps(x)$ integrates to $0$, we find that the second term in the r.h.s. vanishes:
\[
\int_{\bbs^\dm} \rho^0(x)^{m-1} \delta^2 \rho^0(x) \dx = \frac{1}{|\bbs^\dm |^{m-1}}  \int_{\bbs^\dm} \delta^2 \rho^0(x) \dx = 0.
\]

Also,
\[
\int_{\bbs^\dm} x \rho^0(x) \dx  = 0,
\]
and hence,
\begin{align*}
\iint_{\bbs^\dm\times \bbs^\dm} \langle x,y \rangle \rho^0(x) \delta^2 \rho^0(y) \dx \dy &=\bigg \langle \int_{\bbs^\dm} x \rho^0(x) \dx, \int_{\bbs^\dm} y \, \delta^2 \rho^0(x) \dy \bigg \rangle =0.
\end{align*}

Now drop the superscript $0$ and denote $\delta \rho = \delta \rhoeps_{\, | \epsilon = 0}$. We find from \eqref{eqn:d2Eeps}:
\begin{equation}
\label{eqn:d2Eeps0}
\begin{aligned}
\frac{\d^2}{\d \epsilon^2} E[\rhoeps]_{\big| {\epsilon = 0}}&= \frac{m}{|\bbs^\dm|^{m-2}} \int_{\bbs^\dm}  \left( \delta \rho(x) \right)^2 \dx -\kappa \left \| \int_{\bbs^\dm} x \delta \rho(x) \dx \right \|^2.
\end{aligned}
\end{equation}

For the uniform density to be a local minimum, the expression above has to be positive, i.e., 
\[
\kappa < \frac{m}{|\bbs^\dm|^{m-2}} \cdot \frac{\int_{\bbs^\dm} \left( \delta \rho(x) \right)^2 \dx}{\left | \int_{\bbs^\dm} x \delta \rho(x) \dx \right|^2},
\]
for all perturbations $\delta \rho$ that have zero mass.

Define the following functional on $\calA = \{ \psi \in L^2(\bbs^\dm): \int_{\bbs^\dm} \psi(x) \dx = 0, \psi \nequiv 0\}$:
\begin{equation}
\label{eqn:calF}
\calF [\psi] = \frac{\int_{\bbs^\dm} \left( \psi(x) \right)^2 \dx}{\left | \int_{\bbs^\dm} x \psi(x) \dx \right|^2}.
\end{equation}
The uniform distribution is a local minimum (and hence, stable) provided
\begin{equation}
\label{eqn:cond-kappa}
\kappa < \frac{m}{|\bbs^\dm|^{m-2}} \cdot \inf_{\psi \in \calA} \calF[\psi],
\end{equation}
and unstable otherwise. 

\begin{lemma}
\label{lem:minF}
Let $\mathcal{F}: \calA \to \bbr$ be given by \eqref{eqn:calF}. Then,
\[
\inf_{\psi \in \calA} \calF[\psi] =\frac{\dm+1}{\bbs^\dm}.
\]  
\end{lemma}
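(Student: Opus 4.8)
The plan is to minimize $\calF[\psi]$ by expanding $\psi$ in spherical harmonics and exploiting orthogonality. Write $\psi = \sum_{\ell \geq 0} \psi_\ell$, where $\psi_\ell$ is the projection of $\psi$ onto the space $\mathcal{H}_\ell$ of degree-$\ell$ spherical harmonics on $\bbs^\dm$. The zero-mass constraint $\int_{\bbs^\dm}\psi\,\dS = 0$ says precisely that $\psi_0 = 0$. The key observation is that each coordinate function $x \mapsto x_i$ (for $i = 1, \dots, \dm+1$) is a degree-one spherical harmonic, i.e.\ an element of $\mathcal{H}_1$. Hence for the numerator, Parseval gives $\int_{\bbs^\dm}\psi(x)^2\,\dS = \sum_{\ell \geq 1}\|\psi_\ell\|_{L^2}^2$, while for the denominator, $\int_{\bbs^\dm} x\,\psi(x)\,\dS$ depends only on the degree-one component $\psi_1$, since $x_i$ is orthogonal to every $\mathcal{H}_\ell$ with $\ell \neq 1$.

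First I would argue that the infimum is attained by functions with $\psi_\ell = 0$ for all $\ell \geq 2$: adding any such higher-degree component only increases the numerator while leaving the denominator unchanged, so it is suboptimal. Thus $\inf_{\psi \in \calA}\calF[\psi] = \inf_{\psi_1 \in \mathcal{H}_1 \setminus\{0\}} \calF[\psi_1]$, and the problem reduces to a computation purely within the $(\dm+1)$-dimensional space $\mathcal{H}_1$. On $\mathcal{H}_1$ we may write $\psi_1(x) = \langle a, x\rangle$ for some $a \in \bbr^{\dm+1}$. Then the denominator becomes $\left\|\int_{\bbs^\dm} x \langle a, x\rangle \,\dS\right\|^2$, which by symmetry of the sphere equals $\left(\frac{|\bbs^\dm|}{\dm+1}\right)^2 \|a\|^2$, using that $\int_{\bbs^\dm} x_i x_j \,\dS = \frac{|\bbs^\dm|}{\dm+1}\delta_{ij}$. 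The numerator is $\int_{\bbs^\dm}\langle a,x\rangle^2\,\dS = \frac{|\bbs^\dm|}{\dm+1}\|a\|^2$. Taking the ratio, the $\|a\|^2$ cancels and one is left with $\calF[\psi_1] = \frac{|\bbs^\dm|/(\dm+1)}{(|\bbs^\dm|/(\dm+1))^2} = \frac{\dm+1}{|\bbs^\dm|}$, which is independent of $a$; hence the infimum (in fact a minimum, attained on all of $\mathcal{H}_1\setminus\{0\}$) equals $\frac{\dm+1}{|\bbs^\dm|}$, matching the claimed value (the statement's $\bbs^\dm$ in the denominator should read $|\bbs^\dm|$).

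The main obstacle is not conceptual but bookkeeping: one must justify the spherical-harmonic decomposition in $L^2(\bbs^\dm)$ (standard), verify that coordinate functions span $\mathcal{H}_1$ exactly, and correctly evaluate the second-moment integral $\int_{\bbs^\dm} x_i x_j\,\dS = \frac{|\bbs^\dm|}{\dm+1}\delta_{ij}$ — the off-diagonal terms vanish by the reflection symmetry $x_i \mapsto -x_i$, and the diagonal value follows from summing $\sum_i \int x_i^2\,\dS = \int \|x\|^2\,\dS = |\bbs^\dm|$ together with rotational symmetry, which forces all $\dm+1$ diagonal entries to be equal. An alternative to the harmonic-decomposition step, if one wants to avoid invoking that machinery, is to note directly that for fixed denominator the numerator $\int\psi^2$ is minimized (by an $L^2$-projection / Cauchy–Schwarz argument) by the component of $\psi$ in $\mathrm{span}\{x_1,\dots,x_{\dm+1}\}$, which reduces matters to the same finite-dimensional computation; I would present whichever is cleaner, probably the harmonic version since it makes the orthogonality transparent.
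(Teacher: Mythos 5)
Your proof is correct, but it takes a genuinely different route from the paper. The paper argues variationally: it asserts existence of a global minimizer of $\calF$ on $\calA$, derives the Euler--Lagrange equation $\psi(x)|s_\psi|^2 - (\int\psi^2)\, s_\psi\cdot x = \lambda$, shows $\lambda=0$ by integrating, concludes that every critical point is a multiple of $x\mapsto s_{\bar\psi}\cdot x$, and then evaluates $\calF$ there using hyperspherical coordinates and the identity $\int_0^\pi\cos^2\theta\sin^{\dm-1}\theta\,\d\theta = \frac{1}{\dm+1}\int_0^\pi\sin^{\dm-1}\theta\,\d\theta$. You instead decompose $\psi$ into spherical harmonics (or, equivalently, project onto $\mathrm{span}\{x_1,\dots,x_{\dm+1}\}$), observe that the denominator sees only the degree-one component while the numerator can only decrease when higher modes are discarded, and then do the same finite-dimensional computation via $\int_{\bbs^\dm}x_ix_j\,\dS=\frac{|\bbs^\dm|}{\dm+1}\delta_{ij}$, which is the Cartesian form of the paper's trigonometric identity. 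Your route buys something concrete: it yields the inequality $\calF[\psi]\ge\frac{\dm+1}{|\bbs^\dm|}$ directly for every $\psi$, so you never need the existence of a minimizer --- a step the paper justifies only loosely (lower semicontinuity of $\calF$ plus compactness of $\bbs^\dm$ does not by itself give a minimizer over the infinite-dimensional, non-compact set $\calA$). The paper's approach, on the other hand, needs no spectral machinery and parallels the Euler--Lagrange analysis used for the energy itself. One small point worth a sentence in a write-up: if $\psi_1=0$ the denominator of $\calF[\psi]$ vanishes, so such $\psi$ are irrelevant to the infimum; and you correctly note the typo in the statement ($\bbs^\dm$ should be $|\bbs^\dm|$).
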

\begin{proof} 
We will present a direct proof, though this result can also be inferred from the known facts on the free energy with linear diffusion -- see Remark \ref{rmk:minF-linear}.

By Cauchy--Schwarz inequality, we see immediately that $\calF$ is bounded below by $1$. Also, $\calF$ is lower-semicontinuous and $\bbs^\dm$ is compact, from which we infer that $\calF$ admits a global minimizer in $\calA$.

Denote by $s_\psi$ the centre of mass of $\psi$:
\[
s_\psi = \int_{\bbs^\dm} x \psi(x) \dx.
\]
By calculating the first variation and setting it to zero, i.e., 
\[
\d \calF [\psi](\delta \psi) = 0,
\]
we find the following Euler-Lagrange equation for the critical points:
\[
\int_{\bbs^\dm} \left( \psi(x) |s_\psi|^2 - \left( \int_{\bbs^\dm} \psi^2(y) \dy \right) s_\psi \cdot x \right) \delta \psi(x) \dx =0,
\]
for all perturbations $\delta \psi \in \calA$. From here we find
\begin{equation}
\label{eqn:EL-Fcal}
\psi(x) |s_\psi|^2 - \left( \int_{\bbs^\dm} \psi^2(y) \dy \right) s_\psi \cdot x = \lambda,
\end{equation}
for a constant $\lambda$. By integrating this equation over $\bbs^\dm$ on both sides, and using that $\psi$ integrates to $0$, we find $\lambda =0$.

Consider a critical point $\bar{\psi}$ and denote $C:= \int_{\bbs^\dm} \bar{\psi}^2(y) \d y$. From \eqref{eqn:EL-Fcal} (where $\lambda = 0$), we can write
\[
\bar{\psi}(x) = \frac{C}{|s_{\bar{\psi}}|^2} \; s_{\bar{\psi}} \cdot x.
\]
By taking the square of this expression and integrating, we find
\[
C = \frac{|s_{\bar{\psi}}|^4}{\int_{\bbs^\dm} (s_{\bar{\psi}} \cdot x)^2  \dx},
\]
and hence,
\begin{equation}
\label{eqn:barpsi}
\bar{\psi}(x) = \frac{|s_{\bar{\psi}}|^4}{\int_{\bbs^\dm} (s_{\bar{\psi}} \cdot x)^2  \dx} \; s_{\bar{\psi}} \cdot x.
\end{equation}

All the critical points of $\calF$ are in the form above.  Evaluate $\calF$ at such $\bar{\psi}$ to find
\begin{align*}
\calF[\bar{\psi}] &= \frac{C}{|s_{\bar{\psi}}|^2} \\
& = \frac{|s_{\bar{\psi}}|^2}{\int_{\bbs^\dm} (s_{\bar{\psi}} \cdot x)^2  \dx}.
\end{align*}

Assume by symmetry that $s_{\bar{\psi}}$ is in the direction of the fixed unit vector $x_0 \in \bbs^\dm$, and use the hyperspherial coordinates used previously ($\theta$ denotes the angle between $x$ and $s_{\bar{\psi}}$). Then we get:
\[
\calF[\bar{\psi}] = \frac{1}{|\bbs^{\dm-1}| \int_0^\pi \cos^2 \theta  \sin^{\dm-1} \theta \d \theta}.
\]
The expression above can be simplified by using:
\begin{equation}
\label{eqn:cossin-rel}
\int_0^\pi \cos^2 \theta  \sin^{\dm-1} \theta \d \theta = \frac{1}{\dm +1} \int_0^\pi \sin^{\dm-1} \theta \d\theta,
\end{equation}
and 
\[
|\bbs^{\dm}| = |\bbs^{\dm-1}| \int_0^\pi \sin^{\dm-1} \theta\d\theta,
\]
to write
\begin{equation}
\label{eqn:Fmin}
\calF[\bar{\psi}] = \frac{\dm+1}{|\bbs^{\dm}|}.
\end{equation}

Note that $\calF$ is invariant under scalar multiplication $\psi \to \lambda \psi$, which is reflected in the fact that the value of $\calF[\bar{\psi}]$ does not depend on the magnitude of $s_{\bar{\psi}}$. Put it differently, up to symmetry and a scalar multiplication, the critical points in the form \eqref{eqn:barpsi} are unique. Since $\calF$ admits a global minimizer, $\bar{\psi}$ must necessarily achieve this mimimum value. 
\end{proof}

\begin{remark}
\label{rmk:minF-linear} Lemma \ref{lem:minF} also follows directly from the known results for linear diffusion. Indeed, consider the free energy with linear diffusion, which can be simplified and written as
\[
E_1[\rho] =\int_{\bbs^\dm}\rho(x)\log \rho(x)\dx-\frac{\kappa}{2}\left\|\int_{\bbs^\dm}x\rho(x)\dx\right\|^2+\frac{\kappa}{2}.
\]

By taking perturbations $\rho^\epsilon$ of the uniform distribution and performing a similar calculation as for $m \neq 1$, we calculate
\[
\frac{\d^2}{\d \epsilon^2} E_1[\rhoeps]_{\big| {\epsilon = 0}} = \frac{1}{|\bbs^\dm|^{-1}} \int_{\bbs^\dm}  \left( \delta \rho(x) \right)^2 \dx -\kappa \left \| \int_{\bbs^\dm} x \delta \rho(x) \dx \right \|^2,
\]
which is in fact \eqref{eqn:d2Eeps0} for $m=1$. Since the uniform distribution is a global energy minimizer for $\kappa \leq \dm +1$ \cite{fatkullin2005critical, FrouvelleLiu2012}, the second variation calculated above needs to be non-negative, for all $\delta \rho \in \calA$. Hence, $| \bbs^\dm | \mathcal{F}[\delta \rho]\geq \kappa$, for all $\delta \rho \in \calA$ and $\kappa \geq \dm+1$, from which we conclude
\[
\inf_{\psi \in \calA} \mathcal{F}[\psi] \geq \frac{\dm+1}{|\bbs^\dm|}.
\]
\end{remark}

The considerations above lead to the following result.
\begin{proposition}[Stability of the uniform distribution]
\label{prop:stab-unif}
The uniform distribution $\rhou$ is a stable critical point of the energy functional $E$ if and only if $\kappa < \kappa_1$, where
\begin{equation}
\label{eqn:kappa1}
\kappa_1:= \frac{m (\dm+1)}{|\bbs^\dm|^{m-1}}.
\end{equation}
\end{proposition}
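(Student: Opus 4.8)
The plan is to read off the threshold $\kappa_1$ directly from the second-variation analysis developed above, together with Lemma~\ref{lem:minF}. Substituting $\inf_{\psi\in\calA}\calF[\psi]=\frac{\dm+1}{|\bbs^\dm|}$ from Lemma~\ref{lem:minF} into the stability criterion \eqref{eqn:cond-kappa} gives
\[
\frac{m}{|\bbs^\dm|^{m-2}}\cdot\inf_{\psi\in\calA}\calF[\psi]=\frac{m(\dm+1)}{|\bbs^\dm|^{m-1}}=\kappa_1,
\]
so that \eqref{eqn:cond-kappa} reads precisely $\kappa<\kappa_1$. What remains is to spell out the two implications with strict inequalities in the right places.

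For the sufficiency, I would recast Lemma~\ref{lem:minF} as the sharp bound $\bigl\|\int_{\bbs^\dm}x\,\delta\rho(x)\dx\bigr\|^2\le\frac{|\bbs^\dm|}{\dm+1}\int_{\bbs^\dm}(\delta\rho(x))^2\dx$ for every $\delta\rho\in\calA$, and insert it into the identity \eqref{eqn:d2Eeps0} to obtain the coercive lower bound
\[
\frac{\d^2}{\d\epsilon^2}E[\rhoeps]\big|_{\epsilon=0}\ \ge\ \frac{|\bbs^\dm|}{\dm+1}\,(\kappa_1-\kappa)\int_{\bbs^\dm}(\delta\rho(x))^2\dx\ >\ 0,
\]
valid for all admissible families $\rhoeps$ satisfying \eqref{rho-eps-cond} with $\delta\rho\not\equiv0$. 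Hence, when $\kappa<\kappa_1$, $\rhou$ is a strict local minimizer of $E$, i.e., stable.

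For the necessity, I would test with the optimizer exhibited in the proof of Lemma~\ref{lem:minF}: take $\bar\psi(x)=\langle s,x\rangle$ for a fixed nonzero $s\in\bbr^{\dm+1}$, which by \eqref{eqn:barpsi} realizes $\calF[\bar\psi]=\frac{\dm+1}{|\bbs^\dm|}$ and has zero mean. Since $\rhou$ is a positive constant and $\bar\psi$ is bounded, $\rhoeps:=\rhou+\epsilon\bar\psi$ is an admissible family in $\calP_{ac}(\bbs^\dm)$ for $|\epsilon|$ small, with $\delta\rho=\bar\psi$; then \eqref{eqn:d2Eeps0} yields $\frac{\d^2}{\d\epsilon^2}E[\rhoeps]\big|_{\epsilon=0}=\frac{|\bbs^\dm|}{\dm+1}(\kappa_1-\kappa)\int_{\bbs^\dm}\bar\psi(x)^2\dx\le0$, which is strictly negative once $\kappa>\kappa_1$. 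Since the first variation vanishes (because $\int_{\bbs^\dm}\delta\rho(x)\dx=0$ and $c_{\rhou}=0$), this forces $E[\rhoeps]<E[\rhou]$ for small $\epsilon\neq0$ when $\kappa>\kappa_1$, so $\rhou$ is not a local minimizer and is unstable.

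The step I expect to require the most care is the borderline value $\kappa=\kappa_1$, where the second variation \eqref{eqn:d2Eeps0} is only nonnegative and degenerates precisely along the $(\dm+1)$-dimensional space spanned by the coordinate functions $\langle s,\cdot\rangle$. I would handle it by taking ``stable'' to mean that \eqref{eqn:d2Eeps0} is \emph{strictly} positive on all of $\calA$, so that $\kappa=\kappa_1$ is excluded by definition, consistently with the strict inequality in \eqref{eqn:cond-kappa}; if instead one insists on genuine dynamical stability at $\kappa=\kappa_1$, the sign has to be read off from the higher-order expansion of $\epsilon\mapsto E[\rhou+\epsilon\bar\psi]$ --- here the cubic term drops out because $\bar\psi$ is odd under $x\mapsto-x$ whereas $\bbs^\dm$ is symmetric, so one is pushed to fourth order and, more carefully, to a Lyapunov--Schmidt reduction onto the degenerate subspace that also records the stabilizing effect of the transverse directions. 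Everything else is a routine assembly of \eqref{eqn:d2Eeps0}, \eqref{eqn:cond-kappa}, and Lemma~\ref{lem:minF}.
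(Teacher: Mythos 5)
Your proposal is correct and follows essentially the same route as the paper, which simply substitutes Lemma \ref{lem:minF} into the criterion \eqref{eqn:cond-kappa} derived from the second variation \eqref{eqn:d2Eeps0}. Your write-up is in fact somewhat more careful than the paper's one-line proof, in that you make the two implications explicit (coercive lower bound for sufficiency, testing with the optimizer $\bar\psi(x)=\langle s,x\rangle$ for necessity) and you flag the degenerate borderline case $\kappa=\kappa_1$, which the paper resolves only implicitly by its convention that stability means strict positivity of the second variation.
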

\begin{proof}
The proof follows from the previous calculations, by using Lemma \ref{lem:minF} in \eqref{eqn:cond-kappa}. 
\end{proof}

\begin{remark}
\label{rmk:lin-consist}
We note that in the limit $m \searrow 1$, Proposition \ref{prop:stab-unif} recovers the stability result from the classical Onsager free energy with linear diffusion \cite{fatkullin2005critical,FrouvelleLiu2012}. Indeed, in the case of linear diffusion, the uniform distribution is stable for $\kappa<\dm +1$, and unstable otherwise. In the context of polymer orientation \cite{fatkullin2005critical,FrouvelleLiu2012}, this critical interaction strength indicates the phase transition from isotropic to nematic states.
\end{remark}

%{\color{blue}
%\begin{remark}
%Let $\kappa_1>\kappa_2$. Let $\rho_1$ be the global minimizer of $E_{m,\kappa_1}[\cdot]$. Then, we have
%\[
%\inf E_{m,\kappa_1}=E_{m,\kappa_1}[\rho_1]\geq E_{m,\kappa_2}[\rho_1]\geq %\inf E_{m, \kappa_2}.
%\]
%Finally, we have
%\[
%\inf E_{m,\kappa_1}\geq\inf E_{m, \kappa_2}.
%\]
%The equality $E_{m,\kappa_1}[\rho_1]= E_{m,\kappa_2}[\rho_1]$ holds if and only if $\iint_{\bbs^\dm\times\bbs^\dm}\|x-y\|^2\rho_1(x)\rho_1(y)\dx\dy=0$ and it implies $\rho_1(x)$ is the Dirac delta measure concentrated at one point on $\bbs^\dm$. 
%\end{remark}
%}

%We will find in the next section that for $\kappa<\kappa_1$, the uniform distribution is the only critical point of the energy (and hence, a global minimizer), and at $\kappa=\kappa_1$ a bifurcation occurs.

In the next sections, we investigate the existence and bifurcations of critical points of the energy. We will consider the following cases separately i) $1<m<2$, ii) $m=2$, and iii) $m>2$. The case $m=2$ is considered separately, as it constitutes a degenerate case. The range $m>2$ is very different qualitatively, and it is only considered for $\dm =2$.
%%%%%%%%%%%%%%%

\section{Case $1<m<2$: Critical values of $\kappa$ and energy minimizers}
\label{sect:bifurcations-mg1}

%For convenience, we list beforehand the notations that will be used in this section:
%\begin{itemize}
%\item $\kappa_1$: value of $\kappa$ at which a fully supported equilibrium $\rho_\kappa$ gets born from the uniform distribution $\rhou$; this is the same as $\kappa_1$ from Proposition \ref{prop:stab-unif}.
%\item $\kappa_2$: value of $\kappa$ at which a bifurcation from the fully supported equilibrium occurs. Depending on the range of $m$, at $\kappa=\kappa_2$ we have the emergence of a compactly supported\footnote{We use improperly, but for the convenience of writing, the phrase ``compactly supported equilibria" to mean equilibria supported on a strict subset of $\bbs^\dm$.} equilibrium ($1<m<2$) or of a measure-valued equilibrium with a singular component ($0<m<1-2/\dm$). 
%\end{itemize}

%\subsection{Case $1<m<2$}  
%\label{subsect:mg1}
In this section we investigate the range $1<m<2$. We identify two critical values of $\kappa$, which we present in order. We then investigate the energy minimizers and also provide some numerical results.

\subsection{Bifurcation from the uniform distribution} 
\label{subsect:fs}
We first study a bifurcation that occurs at $\kappa = \kappa_1$ (see Proposition \ref{prop:stab-unif}). Consider the fully supported steady state \eqref{eqn:equil-fs}, with $\lambda$ and $\|c_\rho\|$ that solve \eqref{eqn:system-fs-mg1}. Recall that in this case, $\lambda\geq \kappa\|c_\rho\|$ holds.

For simpler notation, substitute 
\begin{equation}
\label{eqn:slambda}
  s=\|c_\rho\|, \qquad \lambda=-\kappa s\eta.
\end{equation}
To guarantee the condition on $\lambda$, we assume $\eta\leq -1$. Then, \eqref{eqn:system-fs-mg1} becomes
\begin{align*}
1&=\dm w_\dm\left(\frac{\kappa s (m-1)}{m}\right)^{\frac{1}{m-1}}\int_0^\pi (\cos\theta-\eta)^{\frac{1}{m-1}}\sin^{\dm-1}\theta\d\theta,\\
s&=\dm w_\dm\left(\frac{\kappa s (m-1)}{m}\right)^{\frac{1}{m-1}}\int_0^\pi (\cos\theta-\eta)^{\frac{1}{m-1}}\sin^{\dm-1}\theta\cos\theta\d\theta.
\end{align*}
From the above, we can express $s$ and $\kappa$ in terms of $\eta$ as follows:
\begin{subequations}
\label{eqn:skappa-eta}
\begin{align}
 s&=\frac{\int_0^\pi (\cos\theta-\eta)^{\frac{1}{m-1}}\sin^{\dm-1}\theta \cos\theta\d\theta}{\int_0^\pi (\cos\theta-\eta)^{\frac{1}{m-1}}\sin^{\dm-1}\theta\d\theta}, \label{eqn:s-eta} \\[2pt]
 \kappa^{-1}&=\displaystyle \frac{m-1}{m} (\dm w_\dm)^{m-1} \left(\int_0^\pi (\cos\theta-\eta)^{\frac{1}{m-1}}\sin^{\dm-1}\theta \cos\theta\d\theta\right)
\left(\int_0^\pi (\cos\theta-\eta)^{\frac{1}{m-1}}\sin^{\dm-1}\theta\d\theta\right)^{m-2}. \label{eqn:kappa-eta}
\end{align}
\end{subequations}

Define the following function for $\eta\leq -1$:
\begin{equation}
\label{eqn:H}
H(\eta)= \frac{m-1}{m} (\dm w_\dm)^{m-1} \left(\int_0^\pi (\cos\theta-\eta)^{\frac{1}{m-1}}\sin^{\dm-1}\theta \cos\theta\d\theta\right)
\left(\int_0^\pi (\cos\theta-\eta)^{\frac{1}{m-1}}\sin^{\dm-1}\theta\d\theta\right)^{m-2}.
\end{equation}
With this definition, \eqref{eqn:kappa-eta} can be written as 
\[
H(\eta) = \kappa^{-1}.
\]

\begin{lemma}
\label{lem:H-monotone}
The function $H(\eta)$ given by \eqref{eqn:H} is decreasing for $1<m<2$,  constant for $m=2$, and increasing for $m>2$.    
\end{lemma}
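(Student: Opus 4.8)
The plan is to reduce the trichotomy to the sign of one quadratic expression and then to analyze that quadratic; the reductions are routine, and the quadratic inequality is where the real work lies.

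\emph{Reduction to a derivative.} Put $u=\cos\theta$, $p:=\tfrac1{m-1}$, $\nu:=\tfrac{\dm-2}2$, $a:=-\eta\in[1,\infty)$. Using $\int_0^\pi f(\cos\theta)\sin^{\dm-1}\theta\,\d\theta=\int_{-1}^1 f(u)(1-u^2)^\nu\,\d u$, the function \eqref{eqn:H} becomes $H(\eta)=\tfrac{m-1}{m}(\dm w_{\dm})^{m-1}L_1L_0^{\,m-2}$, with $L_j:=\int_{-1}^1 u^j(u+a)^p(1-u^2)^\nu\,\d u$; note $L_0,L_1>0$. Since $\partial_\eta(u+a)^p=-p(u+a)^{p-1}$, differentiation under the integral gives $L_j'=-pR_j$ with $R_j:=\int_{-1}^1 u^j(u+a)^{p-1}(1-u^2)^\nu\,\d u$, so (as $\tfrac{m-1}{m}(\dm w_{\dm})^{m-1}>0$, $L_0>0$, $p>0$) $\mathrm{sign}\,H'(\eta)=-\mathrm{sign}\big[R_1L_0+(m-2)R_0L_1\big]$. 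The asserted trichotomy is exactly that this sign equals $\mathrm{sign}(m-2)$ (for $m=2$ it is immediate: then $L_1$ is $\eta$-independent and $L_0^{m-2}\equiv1$, so $H$ is constant).

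\emph{Reduction to a quadratic.} From $(u+a)^p=(u+a)(u+a)^{p-1}$ we get $L_0=R_1+aR_0$ and $L_1=R_2+aR_1$, hence $R_1L_0+(m-2)R_0L_1=R_1^2+(m-1)aR_0R_1+(m-2)R_0R_2$. To eliminate $R_2$, observe that $g(u):=(u+a)^{p-1}(1-u^2)^\nu$ satisfies $\tfrac{\d}{\d u}\big[(u+a)^p(1-u^2)^{\nu+1}\big]=g(u)\big[p(1-u^2)-\dm\,u(u+a)\big]$; integrating over $[-1,1]$, where the boundary term vanishes because $p>0$ and $\nu+1=\dm/2>0$, gives the three-term relation $(p+\dm)R_2+a\dm R_1-pR_0=0$. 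Substituting $R_2=\tfrac{pR_0-a\dm R_1}{p+\dm}$ and using $(m-1)p=1$ and $(m-2)p=-(p-1)$, a short computation yields $R_1L_0+(m-2)R_0L_1=\tfrac{R_0^2}{p+\dm}\,q(\sigma)$, where $q(t):=(p+\dm)t^2+(\dm+1)a\,t-(p-1)$ and $\sigma:=R_1/R_0$. Thus $\mathrm{sign}\,H'(\eta)=-\mathrm{sign}\,q(\sigma)$; and since $(u+a)^{p-1}$ is increasing, constant, or decreasing in $u$ according as $p\gtrless1$, Chebyshev's correlation inequality against the symmetric measure $(1-u^2)^\nu\,\d u$ gives $\mathrm{sign}\,\sigma=\mathrm{sign}\,R_1=\mathrm{sign}(p-1)=\mathrm{sign}(2-m)$. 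Hence everything comes down to showing: $q(\sigma)$ has the same sign as $\sigma$.

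\emph{The quadratic inequality (the obstacle).} Write $\rho(t):=(p+\dm)t^2+a\dm t-p$, the quadratic in the three-term relation, so $q(t)=\rho(t)+at+1$. That relation says $\mathbb{E}_{\tilde\mu}[\rho(u)]=0$, where $\tilde\mu$ is the probability measure on $[-1,1]$ with density $\propto(u+a)^{p-1}(1-u^2)^\nu$ and $\sigma=\mathbb{E}_{\tilde\mu}[u]$; since $\rho$ is quadratic, $\mathbb{E}_{\tilde\mu}[\rho(u)]=\rho(\sigma)+(p+\dm)\,\mathrm{Var}_{\tilde\mu}(u)$, so $q(\sigma)=a\sigma+1-(p+\dm)\,\mathrm{Var}_{\tilde\mu}(u)$, and one must show this has the sign of $\sigma$ — equivalently, that $(p+\dm)R_1^2+(\dm+1)aR_0R_1-(p-1)R_0^2$ has the sign of $p-1$. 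This is delicate because it is asymptotically tight: as $a\to\infty$ one has $\sigma\sim\tfrac{p-1}{a(\dm+1)}$ and $\mathrm{Var}_{\tilde\mu}(u)\to\tfrac1{\dm+1}$, so $q(\sigma)\to0$ and the desired sign is carried only by the second-order term of $q(\sigma)$ in $1/a$. I expect to settle it either (a) for the applications-relevant dimension $\dm=2$, by evaluating $R_0,R_1$ in closed form in terms of $(a+1)^p$ and $(a-1)^p$ and verifying the inequality through an explicit factorization (for instance, at $a=1$ it reduces to $q(\sigma)=\tfrac{2p(p-1)}{(p+1)^2}$), or (b) for general $\dm$, by iterating the same integration-by-parts identity — e.g.\ with the weight $(u+a)^{p-2}(1-u^2)^\nu$, legitimate when $p>1$, i.e.\ $1<m<2$ — to express $R_1$ through lower-order moments and reduce the claim to the positivity of an explicit quadratic form in those moments, to be checked from their positivity and the constraints imposed by the recurrence, or (c) by showing directly that $a\mapsto q(\sigma(a))$ has no zero on $[1,\infty)$ and concluding from the $a\to\infty$ asymptotics. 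In any of these routes, it is this quadratic-sign inequality, not the reductions of the first two steps, that is the genuine difficulty.
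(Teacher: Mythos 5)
Your two reduction steps are correct and check out in detail: the passage from $H'$ to the sign of $R_1L_0+(m-2)R_0L_1$, the identities $L_0=R_1+aR_0$ and $L_1=R_2+aR_1$, the three-term recurrence $(p+\dm)R_2+a\dm R_1-pR_0=0$, and the resulting identity $R_1L_0+(m-2)R_0L_1=\tfrac{R_0^2}{p+\dm}q(\sigma)$ with $q(t)=(p+\dm)t^2+(\dm+1)at-(p-1)$, $\sigma=R_1/R_0$. This is a genuinely different organization from the paper's, which instead writes $H'/H$ as a multiple of the two-by-two determinant $\mathcal{H}_{m,\dm}(-\eta)$ of \eqref{eqn:Hmd} and determines its sign in Lemma \ref{sign-of-functionalH} by converting each integral into an associated Legendre function $P_\mu^\nu(\cosh\beta)$, invoking an integral representation of $P_\mu^\nu$, and applying the integral form of the Chebyshev inequality to the monotone pair $f(t)=\cosh\beta-\cosh t$, $g(t)=\cosh\bigl|A-\tfrac12\bigr|t/\cosh\bigl|A+\tfrac12\bigr|t$. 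Your reformulation as a single scalar inequality $\mathrm{sgn}\,q(\sigma)=\mathrm{sgn}\,\sigma$ is attractive and arguably more elementary in spirit.

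However, the proposal is not a proof, because the step you yourself label ``the obstacle'' is exactly where the content of the lemma lives, and none of your three proposed routes is carried out. Route (a) restricts to $\dm=2$, whereas the lemma is stated and used for all $\dm\geq 1$ (e.g.\ in Theorem \ref{thm:gminimizers}), and even in that case you only verify the single point $a=1$. Route (b) requires $p>1$, i.e.\ $1<m<2$, so it cannot reach the $m>2$ half of the trichotomy. Route (c) is a program, not an argument. Your own asymptotic analysis shows why the gap is not cosmetic: as $a\to\infty$ one has $q(\sigma)=a\sigma+1-(p+\dm)\,\mathrm{Var}_{\tilde\mu}(u)\to 0$ with the leading terms cancelling (consistent with $H'(\eta)\to 0$ as $\eta\to-\infty$), so the sign is carried by second-order terms in $1/a$ and no crude estimate on $\sigma$ or the variance can close it. Some genuine structural input is required here; in the paper that input is the Legendre-function representation plus the Chebyshev correlation inequality of Appendix \ref{appendix:H-monotone}, and an argument of comparable substance must replace your sketch before Lemma \ref{lem:H-monotone} can be considered proved.
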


\begin{proof}
The proof is based on a technical and fairly involved result that uses associated Legendre functions of the first kind and the integral form of the Chebyshev inequality. This technical result is stated and proved in Appendix \ref{appendix:H-monotone} -- see Lemma \ref{sign-of-functionalH}. The reader should first check Appendix \ref{appendix:H-monotone} before continue reading. 

We first rewrite the first factor of $H(\eta)$ from \eqref{eqn:H} using integration by parts, to get
%From integration by parts, we get
\begin{align*}
&\int_0^\pi (\cos\theta-\eta)^{\frac{1}{m-1}}\sin^{\dm-1}\theta \cos\theta\d\theta\\
=&\left[(\cos\theta-\eta)^{\frac{1}{m-1}}\times\frac{1}{\dm}\sin^{\dm}\theta\right]_0^\pi-\int_0^\pi\frac{1}{m-1}(\cos\theta-\eta)^{\frac{1}{m-1}-1}(-\sin\theta)\times\frac{1}{\dm}\sin^\dm\theta\d\theta\\
=&\frac{1}{\dm(m-1)}\int_0^\pi(\cos\theta-\eta)^{\frac{1}{m-1}-1}\sin^{\dm+1}\theta\d\theta.
\end{align*}
Therefore, we can express $H(\eta)$ as 
\[
H(\eta)=\frac{1}{m\dm}(\dm w_\dm)^{m-1} \left(\int_0^\pi (\cos\theta-\eta)^{\frac{1}{m-1}-1}\sin^{\dm+1}\theta\d\theta\right)
\left(\int_0^\pi (\cos\theta-\eta)^{\frac{1}{m-1}}\sin^{\dm-1}\theta\d\theta\right)^{m-2}.
\]

Then, for any $\eta<-1$, we have
\begin{equation}
\label{eqn:HpoverH}
\begin{aligned}
%\frac{\d}{\d\eta}\ln H(\eta)&= 
\frac{H'(\eta)}{H(\eta)} &=-\frac{2-m}{m-1}\left(\frac{\int_0^\pi (\cos\theta-\eta)^{\frac{1}{m-1}-2}\sin^{\dm+1}\theta \d\theta}{\int_0^\pi (\cos\theta-\eta)^{\frac{1}{m-1}-1}\sin^{\dm+1}\theta\d\theta}\right) \\
&\quad -\frac{m-2}{m-1}\left(\frac{\int_0^\pi (\cos\theta-\eta)^{\frac{1}{m-1}-1}\sin^{\dm-1}\theta\d\theta}{\int_0^\pi (\cos\theta-\eta)^{\frac{1}{m-1}}\sin^{\dm-1}\theta\d\theta}\right)\\
&=\left(\frac{m-2}{m-1}\right)\left(\frac{\int_0^\pi (\cos\theta-\eta)^{\frac{1}{m-1}-2}\sin^{\dm+1}\theta \d\theta}{\int_0^\pi (\cos\theta-\eta)^{\frac{1}{m-1}-1}\sin^{\dm+1}\theta\d\theta}-\frac{\int_0^\pi (\cos\theta-\eta)^{\frac{1}{m-1}-1}\sin^{\dm-1}\theta\d\theta}{\int_0^\pi (\cos\theta-\eta)^{\frac{1}{m-1}}\sin^{\dm-1}\theta\d\theta}\right).
\end{aligned}
\end{equation}
Using the notation \eqref{eqn:Hmd} in Appendix \ref{appendix:H-monotone}, we can further simplify it as
\[
%\frac{\d}{\d\eta}\ln H(\eta)
\frac{H'(\eta)}{H(\eta)}=-\left(\frac{m-2}{m-1}\right)\frac{\mathcal{H}_{m, \dm}(-\eta)}{\left(\int_0^\pi (\cos\theta-\eta)^{\frac{1}{m-1}-1}\sin^{\dm+1}\theta\d\theta\right)\left(\int_0^\pi (\cos\theta-\eta)^{\frac{1}{m-1}}\sin^{\dm-1}\theta\d\theta\right)}.
\]
Finally, we apply Lemma \ref{sign-of-functionalH} in Appendix \ref{appendix:H-monotone} to get
\begin{align*}
%&\mathrm{sgn}\left(\frac{\d}{\d\eta}\ln H(\eta)\right)\\
&\mathrm{sgn}\left(\frac{H'(\eta)}{H(\eta)} \right) \\
&\quad =\mathrm{sgn}\left(
-\left(\frac{m-2}{m-1}\right)\frac{\mathcal{H}_{m, \dm}(-\eta)}{\left(\int_0^\pi (\cos\theta-\eta)^{\frac{1}{m-1}-1}\sin^{\dm+1}\theta\d\theta\right)\left(\int_0^\pi (\cos\theta-\eta)^{\frac{1}{m-1}}\sin^{\dm-1}\theta\d\theta\right)}
\right)\\
&\quad =\mathrm{sgn}\left(\left(\frac{m-2}{m-1}\right)\left(\frac{1}{m-1}+\frac{\dm-1}{2}\right)\right).
\end{align*}
Here, we used that both $\int_0^\pi (\cos\theta-\eta)^{\frac{1}{m-1}-1}\sin^{\dm+1}\theta\d\theta$ and $\int_0^\pi (\cos\theta-\eta)^{\frac{1}{m-1}}\sin^{\dm-1}\theta\d\theta$ are positive. From $m>1$, we also get $m-1>0$ and $\frac{1}{m-1}+\frac{\dm-1}{2}>0$. Therefore, also using that $H(\eta)$ is positive, we find
\[
\mathrm{sgn}\left(H'(\eta)\right)=\mathrm{sgn}(m-2),
\]
which proves the result.
\end{proof}

%\begin{lemma}
% The function $H(\eta)$ defined in \eqref{eqn:H} is strictly decreasing on % $\eta\in (-\infty,-1)$, for any $1<m<2$, and any dimension $\dm \geq 1$.
%\end{lemma}
%\begin{proof}
%The proof is quite lengthy and involved; in particular, it uses the integral form of the Chebyshev inequality. See Appendix \ref{appendix:H-monotone}.
%\end{proof}

\begin{lemma}
\label{lem:Hlim}
The following holds:
\[
\Bigl( \lim_{\eta \to -\infty } H(\eta)\Bigr)^{-1} = \kappa_1,
\]
where $\kappa_1$ is the critical $\kappa$ identified in Proposition \ref{prop:stab-unif}. 
\end{lemma}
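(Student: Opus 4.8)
The plan is to evaluate $\lim_{\eta\to-\infty}H(\eta)$ directly, starting from the integration-by-parts form of $H$ obtained in the proof of Lemma~\ref{lem:H-monotone},
\[
H(\eta)=\frac{1}{m\dm}(\dm w_\dm)^{m-1}\left(\int_0^\pi (\cos\theta-\eta)^{\frac{1}{m-1}-1}\sin^{\dm+1}\theta\,\d\theta\right)\left(\int_0^\pi (\cos\theta-\eta)^{\frac{1}{m-1}}\sin^{\dm-1}\theta\,\d\theta\right)^{m-2},
\]
which is convenient because neither integrand changes sign. Writing $t=-\eta$ and $\cos\theta-\eta=t\bigl(1+\tfrac{\cos\theta}{t}\bigr)$, I factor the leading power of $t$ out of each integral:
\[
\int_0^\pi (\cos\theta-\eta)^{\frac{1}{m-1}-1}\sin^{\dm+1}\theta\,\d\theta=t^{\frac{2-m}{m-1}}\int_0^\pi\Bigl(1+\tfrac{\cos\theta}{t}\Bigr)^{\frac{2-m}{m-1}}\sin^{\dm+1}\theta\,\d\theta,
\]
and likewise the second integral equals $t^{\frac{1}{m-1}}\int_0^\pi\bigl(1+\tfrac{\cos\theta}{t}\bigr)^{\frac{1}{m-1}}\sin^{\dm-1}\theta\,\d\theta$.

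For $t\geq 2$ the factor $\bigl(1+\tfrac{\cos\theta}{t}\bigr)^{p}$ is bounded uniformly in $\theta\in[0,\pi]$ and $t$, and tends to $1$ pointwise as $t\to\infty$; hence by dominated convergence the two $\theta$-integrals converge to $\int_0^\pi\sin^{\dm+1}\theta\,\d\theta$ and $\int_0^\pi\sin^{\dm-1}\theta\,\d\theta$ respectively (the latter limit being strictly positive, so that raising it to the possibly negative power $m-2$ is harmless). The decisive point is that the powers of $t$ cancel: the first factor carries $t^{\frac{2-m}{m-1}}$ and the second, after the exponent $m-2$, carries $t^{\frac{m-2}{m-1}}=t^{-\frac{2-m}{m-1}}$, so the product is asymptotically $t$-independent. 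This gives
\[
\lim_{\eta\to-\infty}H(\eta)=\frac{1}{m\dm}(\dm w_\dm)^{m-1}\left(\int_0^\pi\sin^{\dm+1}\theta\,\d\theta\right)\left(\int_0^\pi\sin^{\dm-1}\theta\,\d\theta\right)^{m-2}.
\]

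To finish, I would use the reduction identity $\int_0^\pi\sin^{\dm+1}\theta\,\d\theta=\frac{\dm}{\dm+1}\int_0^\pi\sin^{\dm-1}\theta\,\d\theta$ (equivalently \eqref{eqn:cossin-rel}) together with $|\bbs^\dm|=\dm w_\dm\int_0^\pi\sin^{\dm-1}\theta\,\d\theta$ to rewrite the right-hand side as $\frac{1}{m(\dm+1)}(\dm w_\dm)^{m-1}\bigl(\int_0^\pi\sin^{\dm-1}\theta\,\d\theta\bigr)^{m-1}=\frac{|\bbs^\dm|^{m-1}}{m(\dm+1)}=\kappa_1^{-1}$, and taking reciprocals yields the claim.

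There is no real obstacle; the only step needing care is the interchange of limit and integral, which dominated convergence settles since $1+\tfrac{\cos\theta}{t}$ stays bounded away from $0$ and $\infty$ for large $t$, uniformly in $\theta$. One could alternatively work from the original form \eqref{eqn:H}, but then the leading $O(t^{1/(m-1)})$ contribution of the first integral $\int_0^\pi(\cos\theta-\eta)^{1/(m-1)}\sin^{\dm-1}\theta\cos\theta\,\d\theta$ vanishes by oddness of $\cos\theta$ about $\theta=\pi/2$, forcing one to extract the next-order term through a Taylor expansion of $(1+\cos\theta/t)^{1/(m-1)}$; the integration-by-parts route sidesteps that bookkeeping.
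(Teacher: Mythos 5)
Your proof is correct, but it follows a different route from the paper's. The paper works directly with the original form \eqref{eqn:H} of $H$, substitutes $\eta\to-\eta$, and expands $(1+\cos\theta/\eta)^{1/(m-1)}$ via Newton's generalized binomial theorem; because the leading ($k=0$) term of the first integral $\int_0^\pi(\cos\theta-\eta)^{1/(m-1)}\sin^{\dm-1}\theta\cos\theta\,\d\theta$ vanishes (the $\cos\theta$ factor integrates to zero against $\sin^{\dm-1}\theta$), the limit is carried by the $k=1$ term, which produces the factor $\frac{1}{m-1}\int_0^\pi\cos^2\theta\sin^{\dm-1}\theta\,\d\theta$ and, via \eqref{eqn:cossin-rel}, the $\frac{1}{\dm+1}$. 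You instead start from the integrated-by-parts representation of $H$ established inside the proof of Lemma \ref{lem:H-monotone} (which precedes this lemma, so it is available), where both integrands are of one sign and the leading powers of $t=-\eta$ cancel exactly between the two factors; a single application of dominated convergence then yields the limit, with the $\frac{1}{\dm+1}$ arriving through the reduction identity $\int_0^\pi\sin^{\dm+1}\theta\,\d\theta=\frac{\dm}{\dm+1}\int_0^\pi\sin^{\dm-1}\theta\,\d\theta$, equivalent to \eqref{eqn:cossin-rel}. Your route avoids the second-order bookkeeping of the series expansion (as you note yourself), at the modest cost of relying on the earlier integration-by-parts identity; the paper's route is self-contained with respect to \eqref{eqn:H} but must justify term-by-term passage to the limit in the binomial series. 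The dominated-convergence step is properly justified, since $1+\cos\theta/t\in[\tfrac12,\tfrac32]$ for $t\geq 2$, and the final arithmetic matches $\kappa_1^{-1}=|\bbs^\dm|^{m-1}/(m(\dm+1))$.
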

\begin{proof}
The proof is based on a direct calculation, using Newton's generalized binomial theorem; see Appendix \ref{appendix:Hlim}.   
\end{proof}

\begin{remark}
\label{rmk:kappac}
When $\eta\to-\infty$ and $s\to 0$ (along with $\lambda \to \lambda_{\text{uni}}$ defined in \eqref{eqn:lambda-uni}) one finds the uniform distribution.  Therefore, at $\kappa = \kappa_1$, a fully supported equilibrium $\rho_\kappa$ gets born from the uniform distribution $\rhou$. 
\end{remark}

Since the domain of $H(\eta)$ is $(-\infty,-1]$, and the first critical value $\kappa_1$ was derived from the behavior as $\eta \to -\infty$, it is natural to investigate the following second critical value of $\kappa$ defined as
\begin{equation}
\label{eqn:kappa2}
\kappa_{2}:= (H(-1))^{-1}.
\end{equation}
Note that since $H(\eta)$ is decreasing when $1<m<2$, we have $\kappa_1<\kappa_2$. Also, by Lemmas \ref{lem:H-monotone} and \ref{lem:Hlim}, for any $\kappa \in (\kappa_{1},\kappa_{2})$, there exists a unique $\eta_\kappa <-1$ such that $\kappa^{-1} = H(\eta_\kappa)$ -- see equation \eqref{eqn:kappa-eta} and Figure \ref{fig:HF}(a). With $\eta$ being known, one then finds the corresponding $s_\kappa$ from \eqref{eqn:s-eta}, and $\lambda_\kappa = -\kappa s_\kappa \eta_\kappa$ by \eqref{eqn:slambda}. All together, they determine uniquely a fully supported equilibrium in the form \eqref{eqn:equil-fs}. 

\begin{figure}[!htbp]
 \begin{center}
 \begin{tabular}{cc}
 \includegraphics[width=0.48\textwidth]{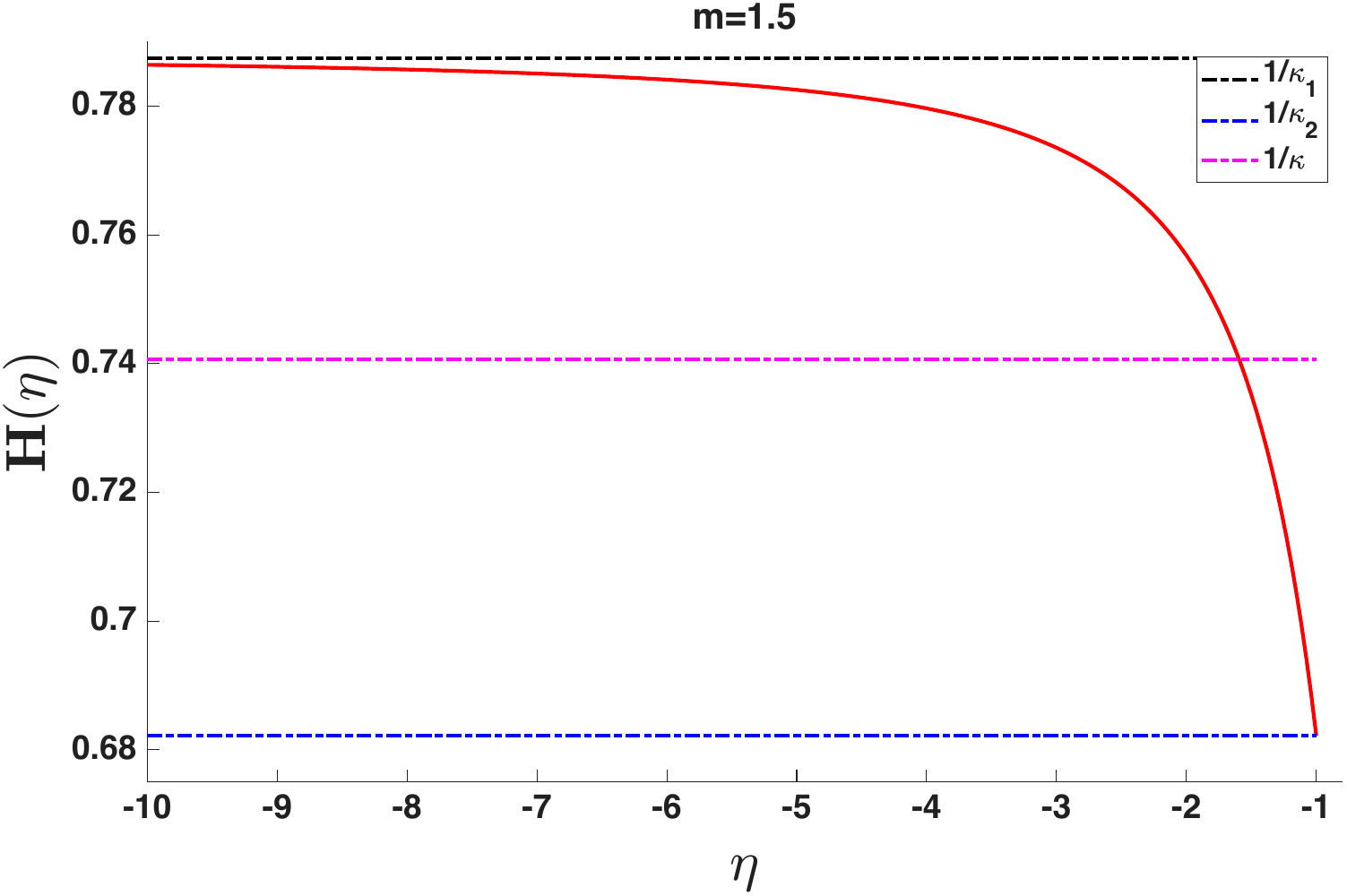} & 
 \includegraphics[width=0.48\textwidth]{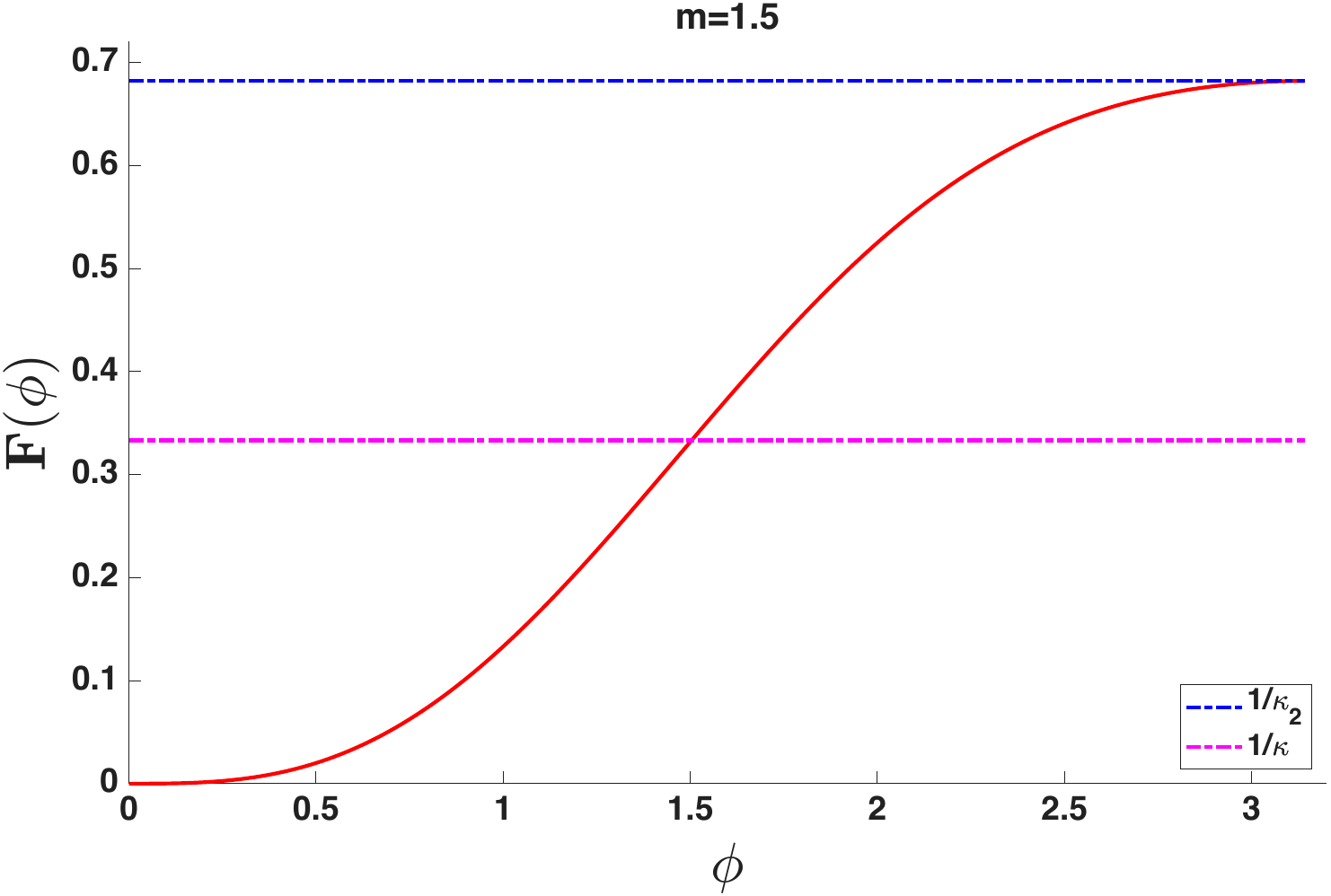} \\
 (a) & (b)
\end{tabular}
\caption{Case $1<m<2$. (a) Plot of function $H$ defined in \eqref{eqn:H}. For any $\kappa_1<\kappa<\kappa_2$, there exists a unique $\eta_\kappa <-1$ such that $\kappa^{-1} = H(\eta_\kappa)$ -- see equation \eqref{eqn:kappa-eta}. (b) Plot of function $F$ defined in \eqref{eqn:F}. For any $\kappa_2<\kappa<\infty$, there exists a unique $\phi_\kappa \in (0,\pi)$ that solves $\kappa^{-1} = F(\phi_\kappa)$ -- see equation \eqref{eqn:kappa-phi}. For both plots, $m=1.5$ and $\dm =2$.}
\label{fig:HF}
\end{center}
\end{figure}

%%%%%

\subsection{Second critical value $\kappa=\kappa_2$} 
\label{subsect:cs}
We now study what happens at the critical value $\kappa = \kappa_2$, with $\kappa_2$ defined in \eqref{eqn:kappa2}.  

Consider the equilibria with support strictly contained in $\bbs^\dm$ -- see \eqref{eqn:equil-cs}. 
%For simplicity, we use the notation
%\[
%\phi:=\arccos\left(-\frac{\lambda}{\kappa\|c_\rho\|}\right),
%\]
We will continue using the notation $s=\|c_\rho\|$. Also recall the notation $\displaystyle \phi=\arccos(-\lambda/(\kappa\|c_\rho\|))$, so for strictly supported equilibria we have
\begin{equation}
\label{eqn:lambdaphi}  
   \lambda=-\kappa s \cos\phi.
\end{equation}
Then, we write system \eqref{eqn:system-cs-mg1} as
\begin{align*}
\displaystyle 1&=\dm w_\dm\left(\frac{\kappa s (m-1)}{m}\right)^{\frac{1}{m-1}}\int_0^\phi(\cos\theta-\cos\phi)^{\frac{1}{m-1}}\sin^{\dm-1}\theta \d\theta,\\
\displaystyle s&=\dm w_\dm\left(\frac{\kappa s (m-1)}{m}\right)^{\frac{1}{m-1}}\int_0^\phi(\cos\theta-\cos\phi)^{\frac{1}{m-1}}\sin^{\dm-1}\theta \cos\theta \d\theta,
\end{align*}
from which we can express $s$ and $\kappa$ in terms of $\phi$ as follows:
\begin{subequations}
\label{eqn:skappa-phi}
\begin{align}
 s&=\displaystyle\frac{\int_0^\phi (\cos\theta-\cos\phi)^{\frac{1}{m-1}}\sin^{\dm-1}\theta \cos\theta\d\theta}{\int_0^\phi (\cos\theta-\cos\phi)^{\frac{1}{m-1}}\sin^{\dm-1}\theta\d\theta}, \label{eqn:s-phi}\\[5pt]
 \kappa^{-1}&=\displaystyle \frac{m-1}{m} (\dm w_\dm)^{m-1} \left(\int_0^\phi (\cos\theta-\cos\phi)^{\frac{1}{m-1}}\sin^{\dm-1}\theta \cos\theta\d\theta\right)\left(\int_0^\phi (\cos\theta-\cos\phi)^{\frac{1}{m-1}}\sin^{\dm-1}\theta \d\theta\right)^{m-2}. \label{eqn:kappa-phi}
\end{align}
\end{subequations}

\begin{lemma}
\label{lem:F-monotone}
Let $1<m<2$ be fixed, and define the following function on $0<\phi<\pi$:
\begin{equation}
\label{eqn:F}
F(\phi)= \frac{m-1}{m} (\dm w_\dm)^{m-1} \left(\int_0^\phi (\cos\theta-\cos\phi)^{\frac{1}{m-1}}\sin^{\dm-1}\theta \cos\theta\d\theta\right)\left(\int_0^\phi (\cos\theta-\cos\phi)^{\frac{1}{m-1}}\sin^{\dm-1}\theta \d\theta\right)^{m-2}.
\end{equation}
Then, $F$ is an increasing function on $(0,\pi)$.
\end{lemma}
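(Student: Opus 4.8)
The plan is to follow the template of the proof of Lemma~\ref{lem:H-monotone}: rewrite the first factor of $F$ by integration by parts, compute the logarithmic derivative $F'/F$, and reduce the monotonicity of $F$ to a Chebyshev-type inequality between two weighted means. First I would integrate by parts in the first integral of \eqref{eqn:F}, exactly as in Lemma~\ref{lem:H-monotone}, with $u=(\cos\theta-\cos\phi)^{\frac1{m-1}}$ and $\d v=\sin^{\dm-1}\theta\cos\theta\,\d\theta$ (so $v=\tfrac1\dm\sin^\dm\theta$). The boundary terms vanish: at $\theta=\phi$ because $(\cos\phi-\cos\phi)^{\frac1{m-1}}=0$ (here $\tfrac1{m-1}>0$), and at $\theta=0$ because $\sin^\dm 0=0$. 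This gives
\[
\int_0^\phi (\cos\theta-\cos\phi)^{\frac1{m-1}}\sin^{\dm-1}\theta\cos\theta\,\d\theta=\frac{1}{\dm(m-1)}\int_0^\phi (\cos\theta-\cos\phi)^{\frac1{m-1}-1}\sin^{\dm+1}\theta\,\d\theta,
\]
hence $F(\phi)=\tfrac{1}{m\dm}(\dm w_\dm)^{m-1}\,A(\phi)\,B(\phi)^{m-2}$ with $A(\phi)=\int_0^\phi(\cos\theta-\cos\phi)^{\frac1{m-1}-1}\sin^{\dm+1}\theta\,\d\theta$ and $B(\phi)=\int_0^\phi(\cos\theta-\cos\phi)^{\frac1{m-1}}\sin^{\dm-1}\theta\,\d\theta$. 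Since $1<m<2$ gives $\tfrac1{m-1}-1=\tfrac{2-m}{m-1}>0$ and $\tfrac1{m-1}-2=\tfrac{3-2m}{m-1}>-1$, all the integrals occurring below are absolutely convergent positive quantities and $F(\phi)>0$ on $(0,\pi)$.

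Next I would differentiate in $\phi$. Using the Leibniz rule (the boundary contribution $(\cos\phi-\cos\phi)^q\sin^n\phi$ vanishes for the positive exponents $q$ involved, and differentiation under the integral sign is valid because the resulting exponents of $\cos\theta-\cos\phi$ still exceed $-1$), one gets
\[
A'(\phi)=\Bigl(\tfrac1{m-1}-1\Bigr)\sin\phi\int_0^\phi(\cos\theta-\cos\phi)^{\frac1{m-1}-2}\sin^{\dm+1}\theta\,\d\theta,\qquad B'(\phi)=\tfrac1{m-1}\sin\phi\int_0^\phi(\cos\theta-\cos\phi)^{\frac1{m-1}-1}\sin^{\dm-1}\theta\,\d\theta.
\]
From $\frac{F'}{F}=\frac{A'}{A}+(m-2)\frac{B'}{B}$ and the identity $\tfrac1{m-1}-1=(2-m)\cdot\tfrac1{m-1}$, the two terms combine into
\[
\frac{F'(\phi)}{F(\phi)}=\frac{2-m}{m-1}\,\sin\phi\,\bigl(R_1(\phi)-R_2(\phi)\bigr),\quad R_1=\frac{\int_0^\phi(\cos\theta-\cos\phi)^{\frac1{m-1}-2}\sin^{\dm+1}\theta\,\d\theta}{\int_0^\phi(\cos\theta-\cos\phi)^{\frac1{m-1}-1}\sin^{\dm+1}\theta\,\d\theta},\quad R_2=\frac{\int_0^\phi(\cos\theta-\cos\phi)^{\frac1{m-1}-1}\sin^{\dm-1}\theta\,\d\theta}{\int_0^\phi(\cos\theta-\cos\phi)^{\frac1{m-1}}\sin^{\dm-1}\theta\,\d\theta}.
\]
Since $\tfrac{2-m}{m-1}>0$ and $\sin\phi>0$ on $(0,\pi)$, it remains to prove $R_1(\phi)>R_2(\phi)$.

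For this, set $g(\theta)=\cos\theta-\cos\phi\ge 0$ on $[0,\phi]$ and $w_1=(\cos\theta-\cos\phi)^{\frac1{m-1}-2}\sin^{\dm+1}\theta>0$, $w_2=(\cos\theta-\cos\phi)^{\frac1{m-1}-1}\sin^{\dm-1}\theta>0$; then $R_i^{-1}=\bigl(\int_0^\phi w_i\,g\,\d\theta\bigr)\big/\bigl(\int_0^\phi w_i\,\d\theta\bigr)$ is the $w_i$-weighted mean of $g$. The key observation is $w_2/w_1=g/\sin^2\theta$; with $t=\cos\theta\in(\cos\phi,1)$ this equals $(t-\cos\phi)/(1-t^2)$, whose $t$-derivative is $\bigl((t-\cos\phi)^2+\sin^2\phi\bigr)/(1-t^2)^2>0$, so $w_2/w_1$ is an increasing function of $g$, equivalently a decreasing function of $\theta$ on $(0,\phi)$. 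Thus $g$ and $w_2/w_1$ are similarly ordered, and Chebyshev's integral inequality (the integral form used in Appendix~\ref{appendix:H-monotone}) with weight $w_1$ gives $\bigl(\int_0^\phi w_1\bigr)\bigl(\int_0^\phi w_1\tfrac{w_2}{w_1}g\bigr)\ge\bigl(\int_0^\phi w_1\tfrac{w_2}{w_1}\bigr)\bigl(\int_0^\phi w_1 g\bigr)$, i.e. $\bigl(\int_0^\phi w_1\bigr)\bigl(\int_0^\phi w_2 g\bigr)\ge\bigl(\int_0^\phi w_2\bigr)\bigl(\int_0^\phi w_1 g\bigr)$, which is exactly $R_1^{-1}\le R_2^{-1}$, with strict inequality because $g$ is nonconstant on $(0,\phi)$. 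Hence $F'(\phi)>0$ for all $\phi\in(0,\pi)$, so $F$ is (strictly) increasing.

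The \emph{main obstacle} is the last step: choosing the weights $w_1,w_2$ so that their ratio is monotone along the level sets of $g$ (this is precisely what makes Chebyshev applicable), which rests on the algebraic identity $1+t^2-2t\cos\phi=(t-\cos\phi)^2+\sin^2\phi>0$. A secondary technical point is the endpoint behaviour: the exponent $\tfrac1{m-1}-2$ degenerates to $-1$ as $m\to 2^-$, so the convergence of the numerator of $R_1$ and the legitimacy of differentiating under the integral sign rely precisely on $m<2$; near $\theta=\phi$ one substitutes $u=\cos\theta-\cos\phi$ to make these estimates rigorous. (Alternatively, $R_1>R_2$ can be extracted from the associated-Legendre-function computation behind $\mathcal H_{m,\dm}$ in Lemma~\ref{sign-of-functionalH}, adapted to the finite interval $(0,\phi)$ with parameter $\cos\phi\in(-1,1)$.)
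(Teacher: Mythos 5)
Your proposal is correct and follows essentially the same route as the paper's Appendix~\ref{appendix:F-monotone}: after integration by parts, the logarithmic derivative $F'/F$ reduces to the same product inequality \eqref{eqn:ineq-G}, and the decisive fact in both arguments is that $\frac{\cos\theta-\cos\phi}{\sin^2\theta}$ is decreasing on $(0,\phi)$, via the identity $(\cos\theta-\cos\phi)^2+\sin^2\phi>0$. The only (cosmetic) difference is that you invoke Chebyshev's integral inequality by name, whereas the paper proves the same inequality by hand through the explicit symmetrization in \eqref{eqn:ineq-Gint}.
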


\begin{proof}
The proof is based on calculating $F'$ and assessing its sign; see Appendix \ref{appendix:F-monotone}.
\end{proof}

Equation \eqref{eqn:kappa-phi} can be writen as $F(\phi) = \kappa^{-1}$. First note that $F(\pi) = H(-1) = \kappa_2^{-1}$ (see \eqref{eqn:H} and \eqref{eqn:kappa2}). Also, $F(0)=0$ as we can write
\[
F(\phi)= \text{const} \times \frac{\int_0^\phi (\cos\theta-\cos\phi)^{\frac{1}{m-1}}\sin^{\dm-1}\theta \cos\theta\d\theta}{\int_0^\phi (\cos\theta-\cos\phi)^{\frac{1}{m-1}}\sin^{\dm-1}\theta \d\theta} \, \left(\int_0^\phi (\cos\theta-\cos\phi)^{\frac{1}{m-1}}\sin^{\dm-1}\theta \d\theta\right)^{m-1},
\]
where the quotient factor in the r.h.s is bounded in absolute value by $1$, while the other factor is $0$ at $\phi =0$. Therefore, since $F$ is strictly increasing, we infer that for any $\kappa \in(\kappa_2,\infty)$, there exists a unique $0<\phi_\kappa<\pi$ that solves $F(\phi_\kappa) = \kappa^{-1}$ -- see Figure \ref{fig:HF}(b), and hence it solves \eqref{eqn:kappa-phi}.  Corresponding to $\phi_\kappa$, we calculate $s_\kappa$ from \eqref{eqn:s-phi}, and then have $\lambda_\kappa = -\kappa s_\kappa \cos \phi_\kappa$ by \eqref{eqn:lambdaphi}. Together they determine a unique equilibrium in the form \eqref{eqn:equil-cs}. 

By combining these considerations with the finding from Section \ref{subsect:fs}, we arrive at the following result.

\begin{proposition}[Critical $\kappa$ and equilibria for $1<m<2$]
\label{prop:bif-mg1}
For any $1<m<2$ and $\dm\geq 1$, there exist two critical values of $\kappa$ ($\kappa_1$ and $\kappa_2$ defined by \eqref{eqn:kappa1} and \eqref{eqn:kappa2}, respectively) such that:

\noindent i) At $\kappa=\kappa_1$, an equilibrium fully supported on $\bbs^\dm$ bifurcates from the uniform distribution $\rhou$. For any $\kappa \in (\kappa_1,\kappa_2)$, there exists a unique equilibrium in the form \eqref{eqn:equil-fs}, given by
\begin{equation}
\label{eqn:rhok-fs}
\rho_\kappa(x)=\left(\frac{m-1}{m}\right)^{\frac{1}{m-1}}\left(\lambda_\kappa+\kappa s_\kappa \cos\theta_x\right)^{\frac{1}{m-1}},\qquad\forall x\in \bbs^\dm,
\end{equation}
where $s_\kappa$ and $\lambda_\kappa$ are uniquely determined by $\kappa$ -- see \eqref{eqn:slambda} and \eqref{eqn:skappa-eta}. 
\smallskip

\noindent ii) At $\kappa=\kappa_2$, the fully supported equilibria from part i) turn into equilibria with support strictly contained in $\bbs^\dm$.  For any $\kappa \in (\kappa_2,\infty)$, there exits a unique equilibrium in the form \eqref{eqn:equil-cs}, given by
\begin{equation}
\label{eqn:rhok-cs}
\rho_\kappa(x)=\begin{cases}
\left(\frac{m-1}{m}\right)^{\frac{1}{m-1}}\left(\lambda_\kappa+\kappa s_\kappa\cos\theta_x\right)^{\frac{1}{m-1}},\qquad&\text{ if }0\leq\theta_x\leq \arccos\left(-\frac{\lambda_\kappa}{\kappa s_\kappa}\right),\\[5pt]
0,\qquad&\text{ otherwise},
\end{cases}
\end{equation}
with $s_\kappa$ and $\lambda_\kappa$ uniquely determined by $\kappa$ -- see \eqref{eqn:lambdaphi} and \eqref{eqn:skappa-phi}.
\end{proposition}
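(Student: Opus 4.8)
The plan is to assemble Proposition \ref{prop:bif-mg1} from the pieces already set up in Sections \ref{subsect:fs} and \ref{subsect:cs}, so the proof is mostly a matter of organizing existing lemmas rather than new computation. For part i), the key chain of reasoning is: the fully supported equilibria \eqref{eqn:equil-fs} are parametrized by $\eta \le -1$ via the substitution \eqref{eqn:slambda}, and solving the mass/centre-of-mass system \eqref{eqn:system-fs-mg1} is equivalent to the single scalar equation $H(\eta) = \kappa^{-1}$ together with \eqref{eqn:s-eta}. By Lemma \ref{lem:H-monotone}, $H$ is strictly decreasing on $(-\infty,-1]$ for $1<m<2$; by Lemma \ref{lem:Hlim}, $H(\eta) \to \kappa_1^{-1}$ as $\eta \to -\infty$; and by definition $H(-1) = \kappa_2^{-1}$. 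Hence $H$ is a strictly decreasing bijection from $(-\infty,-1]$ onto $[\kappa_2^{-1}, \kappa_1^{-1})$, so for each $\kappa \in (\kappa_1,\kappa_2)$ there is a unique $\eta_\kappa$ with $H(\eta_\kappa) = \kappa^{-1}$; then $s_\kappa$ and $\lambda_\kappa$ are determined, yielding the unique equilibrium \eqref{eqn:rhok-fs}. The bifurcation statement at $\kappa = \kappa_1$ is exactly Remark \ref{rmk:kappac} (the limiting state as $\eta \to -\infty$, $s\to 0$ is $\rhou$).

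For part ii), the argument is parallel: strictly supported equilibria \eqref{eqn:equil-cs} are parametrized by the half-angle $\phi \in (0,\pi)$ via \eqref{eqn:lambdaphi}, solving \eqref{eqn:system-cs-mg1} reduces to $F(\phi) = \kappa^{-1}$ with $F$ as in \eqref{eqn:F}, together with \eqref{eqn:s-phi}. By Lemma \ref{lem:F-monotone}, $F$ is strictly increasing on $(0,\pi)$; one then checks the endpoint values $F(0) = 0$ and $F(\pi) = H(-1) = \kappa_2^{-1}$ (the latter because at $\phi = \pi$ the integrals in \eqref{eqn:F} coincide with those in \eqref{eqn:H} evaluated at $\eta = -1$). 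So $F$ is a strictly increasing bijection from $(0,\pi)$ onto $(0,\kappa_2^{-1})$, giving for each $\kappa \in (\kappa_2,\infty)$ a unique $\phi_\kappa$, hence unique $s_\kappa$ and $\lambda_\kappa$, and the unique equilibrium \eqref{eqn:rhok-cs}. The transition statement at $\kappa = \kappa_2$ is the matching of the two families: as $\kappa \uparrow \kappa_2$ we have $\eta_\kappa \to -1$, i.e., $-\lambda/(\kappa s) \to -1$, so $\phi = \arccos(-\lambda/(\kappa s)) \to \pi$, meaning the support fills the sphere; conversely, as $\kappa \downarrow \kappa_2$, $\phi_\kappa \to \pi$, so the strictly supported family emanates continuously from the fully supported one at the threshold.

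The main obstacle is essentially bookkeeping rather than mathematics: I must verify carefully that the two-equation systems \eqref{eqn:system-fs-mg1} and \eqref{eqn:system-cs-mg1} are genuinely equivalent to the scalar reductions $H(\eta) = \kappa^{-1}$ and $F(\phi) = \kappa^{-1}$ — in particular that, given a solution $\eta_\kappa$ (resp.\ $\phi_\kappa$) of the scalar equation, the resulting $s_\kappa$ from \eqref{eqn:s-eta} (resp.\ \eqref{eqn:s-phi}) is automatically positive and that both original equations are then satisfied, so no spurious or missing solutions arise. Positivity of $s_\kappa$ follows because the integrand $(\cos\theta - \eta)^{1/(m-1)}\sin^{\dm-1}\theta$ is positive and weights $\cos\theta$; for $\eta \le -1$ the numerator integral of \eqref{eqn:s-eta} is positive since... one should note $s_\kappa \in (0,1)$, which also guarantees $c_\rho = s_\kappa x_0$ is an admissible centre of mass on the closed unit ball. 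I would state these positivity/consistency checks explicitly but keep the verification brief, citing the sign of the integrands.

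\begin{proof}
The proof is a synthesis of the results established in Sections \ref{subsect:fs} and \ref{subsect:cs}.

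\emph{Part i).} Consider the fully supported ansatz \eqref{eqn:equil-fs}, and recall the substitution \eqref{eqn:slambda} with $\eta \le -1$. As shown in Section \ref{subsect:fs}, the system \eqref{eqn:system-fs-mg1} for the unknowns $s$ and $\lambda$ is equivalent to the pair \eqref{eqn:skappa-eta}; in particular, it reduces to the scalar equation $H(\eta) = \kappa^{-1}$ together with the formula \eqref{eqn:s-eta} for $s$. Note that since the integrand $(\cos\theta-\eta)^{\frac{1}{m-1}}\sin^{\dm-1}\theta$ is positive on $(0,\pi)$, the denominator in \eqref{eqn:s-eta} is positive, and a direct estimate gives $s \in (0,1)$, so the resulting $c_\rho = s x_0$ is admissible. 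By Lemma \ref{lem:H-monotone}, $H$ is strictly decreasing on $(-\infty,-1]$ when $1<m<2$; by Lemma \ref{lem:Hlim}, $\lim_{\eta\to-\infty} H(\eta) = \kappa_1^{-1}$; and $H(-1) = \kappa_2^{-1}$ by the definition \eqref{eqn:kappa2}. Hence $H$ is a strictly decreasing bijection from $(-\infty,-1]$ onto $[\kappa_2^{-1},\kappa_1^{-1})$, and $\kappa_1 < \kappa_2$. Consequently, for every $\kappa \in (\kappa_1,\kappa_2)$ there is a unique $\eta_\kappa < -1$ with $H(\eta_\kappa) = \kappa^{-1}$; this determines $s_\kappa$ via \eqref{eqn:s-eta} and $\lambda_\kappa = -\kappa s_\kappa \eta_\kappa$ via \eqref{eqn:slambda}, and hence a unique equilibrium of the form \eqref{eqn:rhok-fs}. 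Finally, by Lemma \ref{lem:Hlim} and Remark \ref{rmk:kappac}, as $\kappa \downarrow \kappa_1$ one has $\eta_\kappa \to -\infty$ and $s_\kappa \to 0$, and the equilibrium $\rho_\kappa$ converges to the uniform distribution $\rhou$; that is, at $\kappa = \kappa_1$ the fully supported branch bifurcates from $\rhou$.

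\emph{Part ii).} Now consider the strictly supported ansatz \eqref{eqn:equil-cs}, parametrized by $\phi = \arccos(-\lambda/(\kappa\|c_\rho\|)) \in (0,\pi)$ through \eqref{eqn:lambdaphi}. As derived in Section \ref{subsect:cs}, the system \eqref{eqn:system-cs-mg1} is equivalent to the pair \eqref{eqn:skappa-phi}, hence to the scalar equation $F(\phi) = \kappa^{-1}$, with $F$ given by \eqref{eqn:F}, together with the formula \eqref{eqn:s-phi} for $s$; as above, positivity of the integrand gives a positive denominator in \eqref{eqn:s-phi} and $s_\kappa \in (0,1)$. By Lemma \ref{lem:F-monotone}, $F$ is strictly increasing on $(0,\pi)$. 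Moreover $F(\pi) = H(-1) = \kappa_2^{-1}$, since at $\phi = \pi$ the integrals in \eqref{eqn:F} coincide with those defining $H(-1)$ in \eqref{eqn:H}; and $F(0) = 0$, because $F(\phi)$ equals a constant times a quotient bounded in absolute value by $1$ times $\left(\int_0^\phi (\cos\theta-\cos\phi)^{\frac{1}{m-1}}\sin^{\dm-1}\theta\,\d\theta\right)^{m-1}$, and the last factor vanishes as $\phi \to 0$. Therefore $F$ is a strictly increasing bijection from $(0,\pi)$ onto $(0,\kappa_2^{-1})$, so for every $\kappa \in (\kappa_2,\infty)$ there is a unique $\phi_\kappa \in (0,\pi)$ with $F(\phi_\kappa) = \kappa^{-1}$; this yields $s_\kappa$ via \eqref{eqn:s-phi} and $\lambda_\kappa = -\kappa s_\kappa \cos\phi_\kappa$ via \eqref{eqn:lambdaphi}, and hence a unique equilibrium of the form \eqref{eqn:rhok-cs}. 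Finally, as $\kappa \downarrow \kappa_2$ one has $\phi_\kappa \to \pi$, so the support $\{\theta_x \le \phi_\kappa\}$ fills $\bbs^\dm$; conversely, as $\kappa \uparrow \kappa_2$ in part i) one has $\eta_\kappa \to -1$, i.e., $-\lambda_\kappa/(\kappa s_\kappa) \to -1$, so the fully supported equilibria \eqref{eqn:rhok-fs} merge continuously, at $\kappa = \kappa_2$, into the strictly supported branch \eqref{eqn:rhok-cs}. This proves the proposition.
\end{proof}
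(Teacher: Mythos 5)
Your proposal is correct and follows essentially the same route as the paper: the paper's argument for Proposition \ref{prop:bif-mg1} is precisely the assembly of the scalar reductions $H(\eta)=\kappa^{-1}$ and $F(\phi)=\kappa^{-1}$ from Sections \ref{subsect:fs} and \ref{subsect:cs}, together with Lemmas \ref{lem:H-monotone}, \ref{lem:Hlim} and \ref{lem:F-monotone} and the endpoint values $F(0)=0$, $F(\pi)=H(-1)=\kappa_2^{-1}$. Your added positivity check on $s_\kappa$ is a harmless (and reasonable) extra verification that the paper defers to the proof of Theorem \ref{thm:gminimizers}.
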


\begin{remark}
\label{rmk:kappa2}
The second critical value $\kappa_2$ can be computed explicitly in terms of the Gamma function. Indeed. by direct calculations (see Appendix \ref{appendix:kappa2}), we can write $\kappa_2$ as
\begin{equation}
\label{eqn:kappa2-alt}
\kappa_2=\frac{m(1+\dm(m-1))}{(m-1)2^{1+(\dm-1)(m-1)}|\bbs^\dm|^{m-1}}\left(\frac{\Gamma\left(\frac{1}{2}\right) \Gamma\left(\frac{1}{m-1}+\dm\right)}{\Gamma\left(\frac{1}{m-1}+\frac{\dm}{2}\right)\Gamma\left(\frac{\dm+1}{2}\right)}\right)^{m-1}.
\end{equation}
By the log-convexity of the Gamma function, we have 
\[
\log\Gamma\left(\frac{1}{2}\right)+\log\Gamma\left(\frac{1}{m-1}+\dm\right)>\log\Gamma\left(\frac{1}{m-1}+\frac{\dm}{2}\right)+\log\Gamma\left(\frac{\dm+1}{2}\right),
\]
for any $\dm\geq1$ and $m>1$, and it implies $ \left(\frac{\Gamma\left(\frac{1}{2}\right)\Gamma\left(\frac{1}{m-1}+\dm\right)}{\Gamma\left(\frac{1}{m-1}+\frac{\dm}{2}\right)\Gamma\left(\frac{\dm+1}{2}\right)}\right)^{m-1} \geq 1$. In the linear diffusion limit we find that 
\[
\lim_{m\searrow 1} \kappa_2 \geq \lim_{m\searrow 1}\frac{m(1+\dm(m-1))}{(m-1)2^{1+(\dm-1)(m-1)}|\bbs^\dm|^{m-1}}=\infty.
\]
This is consistent with the results with linear diffusion \cite{fatkullin2005critical,FrouvelleLiu2012}, where equilibria have full support, and no second critical $\kappa$ exists.
\end{remark}

%%%%%

\subsection{Energy minimizers}
\label{subsect:energy-mg1}

Next, we will show that the equilibria $\rho_\kappa$ found in Proposition \ref{prop:bif-mg1} are more energetically favourable than the uniform distribution $\rhou$, for every $\kappa>\kappa_1$ fixed.  We caution the reader that for a fixed $\kappa$, the energy of the uniform distribution $E[\rhou]$ depends on $\kappa$ (see \eqref{eqn:energy-s}), though this dependence is not explicit in our notations. 

The formal result is given by the following theorem.
\begin{theorem}[Global energy minimizers for $1<m<2$]
\label{thm:gminimizers} 

Let $1<m<2$ and $\dm\geq 1$. The global minimizer of \eqref{energy-sphere} on $\calP_{ac}(\bbs^\dm)$ is 
\begin{enumerate}
\item the uniform distribution $\rhou$ when $\kappa<\kappa_1$, \\[-7pt]
\item the equilibrium $\rho_{\kappa}$ given by \eqref{eqn:rhok-fs} (for $\kappa_1< \kappa<\kappa_2$) or by \eqref{eqn:rhok-cs} (for $\kappa>\kappa_2$).
\end{enumerate}
Moreover, the energy difference $E[\rhou] - E[\rho_{\kappa}]$ increases with $\kappa$ for $\kappa>\kappa_1$. 
\end{theorem}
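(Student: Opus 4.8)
The plan is to split the argument according to the three regimes of $\kappa$ and to treat the claimed monotonicity of the energy gap separately at the end.

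For the regime $\kappa < \kappa_1$, the claim is that $\rhou$ is the global minimizer. Since the interaction part $-\tfrac{\kappa}{2}\|c_\rho\|^2 + \tfrac{\kappa}{2}$ depends on $\rho$ only through its center of mass, I would fix the value $c_\rho = c$ and minimize the entropy $\tfrac{1}{m-1}\int \rho^m$ over all densities with that prescribed center of mass; this subproblem is strictly convex and by the Euler–Lagrange analysis of Section~\ref{sect:cp} its minimizer is of the form \eqref{eqn:equil-fs} or \eqref{eqn:equil-cs}. Writing the minimal entropy as a function $G(\|c\|)$ of $s = \|c\|$ reduces $E$ to a one-variable function $g(s) = G(s) - \tfrac{\kappa}{2}s^2 + \tfrac{\kappa}{2}$ on $s \in [0, s_{\max}]$. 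The stability computation leading to Proposition~\ref{prop:stab-unif} shows $g''(0) > 0$ precisely when $\kappa < \kappa_1$; to upgrade this local statement to a global one I would show $g$ is increasing on its whole domain, equivalently that $G'(s) > \kappa s$ for all $s>0$. Using the parametrization by $\eta$ (resp. $\phi$) from \eqref{eqn:skappa-eta}, \eqref{eqn:skappa-phi}, the stationarity relation $g'(s) = 0$ is exactly $H(\eta) = \kappa^{-1}$ (resp. $F(\phi) = \kappa^{-1}$); since $H$ decreases to $\kappa_1^{-1}$ and $F$ increases from $0$ to $\kappa_2^{-1} < \kappa_1^{-1}$, when $\kappa < \kappa_1$ we have $\kappa^{-1} > \kappa_1^{-1} \ge H(\eta)$ for all $\eta$ and $\kappa^{-1} > F(\phi)$ for all $\phi$, so $g$ has no interior critical point and, being smooth with $g'(0)=0$ and $g''(0)>0$, is strictly increasing; hence $s=0$, i.e. $\rhou$, is the unique minimizer.

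For $\kappa_1 < \kappa < \kappa_2$ (resp. $\kappa > \kappa_2$), the same reduction applies: $g$ is smooth on $[0,s_{\max}]$ with $g'(0)=0$, but now $g''(0) < 0$, so $\rhou$ is not a minimizer, and by Lemmas~\ref{lem:H-monotone}, \ref{lem:Hlim}, \ref{lem:F-monotone} there is exactly one interior critical point $s_\kappa$, corresponding to the equilibrium $\rho_\kappa$ of Proposition~\ref{prop:bif-mg1}. Since $E$ is lower semicontinuous on the weakly compact set $\calP_{ac}(\bbs^\dm)$ (with the entropy coercive enough to prevent loss of mass or concentration for $1<m<2$), a global minimizer exists; it must be radially symmetric by the convexity argument already given in Section~\ref{sect:cp}, hence it is among $\rhou$ and $\rho_\kappa$. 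Comparing the two values, $g(s_\kappa) < g(0) = E[\rhou]$ because $s_\kappa$ is the unique interior critical point and $g$ must decrease from $s=0$ (as $g''(0)<0$) before it can turn around; so $\rho_\kappa$ is the global minimizer, and is unique up to rotation.

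Finally, for the monotonicity of $E[\rhou] - E[\rho_\kappa]$ in $\kappa$, I would use the envelope (Hamilton–Jacobi) structure of the problem: writing $\mathcal{E}(\rho;\kappa)$ for the energy with interaction strength $\kappa$, both $E[\rhou]$ and $E[\rho_\kappa]$ are of the form $\text{(entropy)} - \tfrac{\kappa}{2}\|c\|^2 + \tfrac{\kappa}{2}$ with $\partial_\kappa \mathcal{E}(\rho;\kappa) = -\tfrac12\|c_\rho\|^2 + \tfrac12$ holding $\rho$ fixed. Since $\rho_\kappa$ minimizes $\mathcal{E}(\cdot;\kappa)$, the envelope theorem gives $\tfrac{d}{d\kappa} E[\rho_\kappa] = -\tfrac12 s_\kappa^2 + \tfrac12$, while $\tfrac{d}{d\kappa} E[\rhou] = \tfrac12$ (as $c_{\rhou}=0$); hence $\tfrac{d}{d\kappa}\bigl(E[\rhou]-E[\rho_\kappa]\bigr) = \tfrac12 s_\kappa^2 > 0$ for $\kappa > \kappa_1$. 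The main obstacle I anticipate is the rigorous justification that the reduced function $g$ has \emph{no} interior critical point other than the ones captured by $H$ and $F$ — i.e. that every radially symmetric critical point of $E$ is exactly one of the families \eqref{eqn:equil-fs}, \eqref{eqn:equil-cs} with parameters solving the respective systems — together with the regularity of $s_\kappa$ in $\kappa$ needed to differentiate; both should follow from the strict monotonicity of $H$ and $F$ and the implicit function theorem, but need to be spelled out carefully.
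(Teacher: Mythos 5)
Your argument is correct in outline but follows a genuinely different route from the paper. The paper never introduces the reduced one--variable function $g(s)$: for $\kappa>\kappa_1$ it takes the two known critical points $\rhou$ and $\rho_\kappa$ and computes the difference $E[\rhou]-E[\rho_\kappa]$ explicitly in the parametrization by $\eta$ (resp.\ $\phi$), writing it as $g_1(\eta)g_2(\eta)$ (resp.\ $\bar g_1(\phi)\bar g_2(\phi)$), showing that $(g_1g_2)'$ has the sign opposite to $H'$ (resp.\ $F'$) up to a nonnegative factor $h$, and invoking Lemmas \ref{lem:H-monotone} and \ref{lem:F-monotone} to conclude monotonicity in $\kappa$; positivity then follows because the difference vanishes at $\kappa=\kappa_1$. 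Your envelope--theorem computation $\frac{\d}{\d\kappa}\bigl(E[\rhou]-E[\rho_\kappa]\bigr)=\frac12 s_\kappa^2>0$ is a cleaner way to get the same monotonicity: if you push the paper's chain--rule computation through (as is done in Section \ref{subsect:gmin-mg2} for $m>2$), $(m-1)h(\eta_\kappa)\kappa^{-1}$ collapses, via \eqref{eqn:s-eta} and \eqref{eqn:kappa-eta}, to exactly $\frac12 s_\kappa^2$, so the two arguments are numerically identical but yours bypasses the sign analysis of $h$ and the explicit formula \eqref{eqn:HpoverH-2}. What your route does \emph{not} buy you is independence from Lemmas \ref{lem:H-monotone}--\ref{lem:F-monotone}: you still need them to know that $g$ has a unique interior critical point and that $\eta_\kappa$, $\phi_\kappa$ (hence $s_\kappa$) are well defined and differentiable in $\kappa$.

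Two points in your reduction need more care than you give them. First, the claim that every interior critical point of $g$ is captured by $H(\eta)=\kappa^{-1}$ or $F(\phi)=\kappa^{-1}$ requires showing that the constrained entropy minimizer at prescribed centre of mass exists, is of the form \eqref{eqn:equil-fs} or \eqref{eqn:equil-cs}, that the map from the parameter $\eta$ (resp.\ $\phi$) to $s$ in \eqref{eqn:s-eta}, \eqref{eqn:s-phi} covers all of $(0,1)$, and that $G$ is differentiable with $G'(s)$ equal to the norm of the multiplier; you flag this yourself and it is fillable, but it is real work that the paper's direct comparison avoids entirely (the paper only needs that the global minimizer is rotationally symmetric, hence one of the already--classified equilibria). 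Second, for the strictly supported branch the envelope identity $\frac{\d}{\d\kappa}E[\rho_\kappa]=-\frac12 s_\kappa^2+\frac12$ requires checking that the moving support contributes nothing; this works because $\rho_\kappa$ vanishes continuously (like $(\cos\theta-\cos\phi_\kappa)^{1/(m-1)}$) at $\partial\,\mathrm{supp}(\rho_\kappa)$, so $\partial_\kappa\rho_\kappa$ is supported, up to a null set, where the Euler--Lagrange identity \eqref{eqn:equil} holds with constant $\lambda$, and $\int\partial_\kappa\rho_\kappa=0$ kills the remaining term. With these details supplied, your proof is complete and arguably more conceptual than the paper's.
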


\begin{proof} When $\kappa<\kappa_1$, the only equilibrium is the uniform distribution and hence, it is the global minimizer. For $\kappa>\kappa_1$, we have two equilibria ($\rhou$ and $\rho_\kappa$) and we will compare directly their energies. We will consider separately the two ranges $\kappa_1<\kappa<\kappa_2$ and $\kappa>\kappa_2$, with equilibria given by \eqref{eqn:rhok-fs} and \eqref{eqn:rhok-cs}, respectively.
\smallskip

{\em Case $\kappa_1<\kappa<\kappa_2$.}  In this case the equlibria $\rho_\kappa$ from \eqref{eqn:rhok-fs} are fully supported in $\bbs^\dm$. We will express $E[\rhou] - E[\rho_{\kappa}]$ in terms of the variable $\eta$. By \eqref{eqn:energy-s} we have
\begin{equation}
\label{eqn:E-diff}
    E[\rhou]-E[\rho_{\kappa}] = 
    \frac{1}{m-1}\int_{\bbs^\dm} \rhou^m(x)  \dS(x)  - \frac{1}{m-1}\int_{\bbs^\dm}\rho_{\kappa}^m(x) \dS(x) + \frac{\kappa}{2} \|c_{\rho_\kappa}\|^2.
\end{equation}
Since $\frac{1}{m-1}\int_{\bbs^\dm} \rhou^m(x) \dS$ is a constant,  consider only 
\[
\frac{\kappa}{2} \|c_{\rho_\kappa}\|^2 - \frac{1}{m-1}\int_{\bbs^\dm}\rho_{\kappa}(x)^m \dS (x).
\]

Recall the notations $s_\kappa = \| c_{\rho_\kappa}\|$ and $\lambda_\kappa = - \kappa s_\kappa \eta_\kappa $. Using \eqref{eqn:s-eta}, \eqref{eqn:kappa-eta} and \eqref{eqn:rhok-fs}, compute
\begin{equation}
\label{eqn:energy-eta}
\begin{aligned}
\frac{\kappa}{2} \|c_{\rho_\kappa}\|^2 - \frac{1}{m-1}\int_{\bbs^\dm} \rho_{\kappa}^m(x) \dS(x) &= \frac{m}{2(m-1)(d\omega_d)^{m-1}} \frac{\int_0^\pi(\cos\theta-\eta_\kappa)^{\frac{1}{m-1}}\sin^{d-1}\theta\cos\theta\d\theta}{(\int_0^\pi(\cos\theta-\eta_\kappa)^{\frac{1}{m-1}}\sin^{d-1}\theta\d\theta)^{m}}\\[3pt]
&\quad-\frac{1}{(m-1) (\dm w_\dm)^{m-1}}\frac{\int_0^\pi(\cos\theta-\eta_\kappa)^{\frac{m}{m-1}}\sin^{d-1}\theta\d\theta}{(\int_0^\pi(\cos\theta-\eta_\kappa)^{\frac{1}{m-1}}\sin^{d-1}\theta\d\theta)^m}\\[3pt]
&=g_1(\eta_\kappa)g_2(\eta_\kappa),
\end{aligned}
\end{equation}
where we denoted
\begin{align}
    g_1(\eta) &= m\int_0^\pi(\cos\theta-\eta)^{\frac{1}{m-1}}\sin^{d-1}\theta\cos\theta\d\theta -2 \int_0^\pi(\cos\theta-\eta)^{\frac{m}{m-1}}\sin^{d-1}\theta\d\theta,\\[5pt]
    g_2(\eta) &= \frac{1}{2(m-1)(d\omega_d)^{m-1}\left(\int_0^\pi(\cos\theta-\eta)^{\frac{1}{m-1}}\sin^{d-1}\theta\d\theta\right)^m}.
\end{align}

Calculate
\begin{equation}
\label{eqn:g1p}
\begin{aligned}
    g_1'(\eta) &=  -\frac{m}{m-1}\int_0^\pi(\cos\theta-\eta)^{\frac{1}{m-1}-1}\sin^{d-1}\theta\cos\theta\d\theta + 2\frac{m}{m-1}\int_0^\pi(\cos\theta-\eta)^{\frac{1}{m-1}}\sin^{d-1}\theta\d\theta\\
    &= -\frac{m}{m-1}\int_0^\pi(\cos\theta-\eta)^{\frac{1}{m-1}-1}\sin^{d-1}\theta((\cos\theta-\eta)+\eta)\d\theta + 2\frac{m}{m-1}\int_0^\pi(\cos\theta-\eta)^{\frac{1}{m-1}}\sin^{d-1}\theta\d\theta\\
    &= -\frac{\eta m}{m-1}\int_0^\pi(\cos\theta-\eta)^{\frac{1}{m-1}-1}\sin^{d-1}\theta\d\theta + \frac{m}{m-1}\int_0^\pi(\cos\theta-\eta)^{\frac{1}{m-1}-1}\sin^{d-1}\theta(\cos\theta-\eta)\d\theta\\
    &= -\frac{2\eta m}{m-1}\int_0^\pi(\cos\theta-\eta)^{\frac{1}{m-1}-1}\sin^{d-1}\theta\d\theta + \frac{m}{m-1}\int_0^\pi(\cos\theta-\eta)^{\frac{1}{m-1}-1}\sin^{d-1}\theta\cos\theta\d\theta
\end{aligned}
\end{equation}
and
\begin{equation}
\label{eqn:g2p}
g_2'(\eta) = \frac{m}{2(m-1)^2(d\omega_d)^{m-1}}  \frac{\int_0^\pi(\cos\theta-\eta)^{\frac{1}{m-1}-1}\sin^{d-1}\theta\d\theta}{\left(\int_0^\pi(\cos\theta-\eta)^{\frac{1}{m-1}}\sin^{d-1}\theta\d\theta\right)^{m+1}}.
\end{equation}
Combining \eqref{eqn:g1p} and \eqref{eqn:g2p} we then find
\begin{align}
    (g_1 (\eta) g_2(\eta))' &= g_1'(\eta)g_2(\eta)+g_1(\eta)g_2'(\eta)\\[3pt]
    &= \frac{\frac{m}{m-1}\int_0^\pi(\cos\theta-\eta)^{\frac{1}{m-1}-1}\sin^{d-1}\theta\d\theta}{2(m-1)(d\omega_d)^{m-1}\left(\int_0^\pi(\cos\theta-\eta)^{\frac{1}{m-1}}\sin^{d-1}\theta\d\theta\right)^m}\times\\[3pt] 
    & \quad \biggl( -2\eta + \frac{\int_0^\pi(\cos\theta-\eta)^{\frac{1}{m-1}-1}\sin^{d-1}\theta\cos\theta\d\theta}{\int_0^\pi(\cos\theta-\eta)^{\frac{1}{m-1}-1}\sin^{d-1}\theta\d\theta} \\[3pt] 
    & \qquad + \frac{m\int_0^\pi(\cos\theta-\eta)^{\frac{1}{m-1}}\sin^{d-1}\theta\cos\theta\d\theta - 2\int_0^\pi(\cos\theta-\eta)^{\frac{m}{m-1}}\sin^{d-1}\theta\d\theta}{\int_0^\pi(\cos\theta-\eta)^{\frac{1}{m-1}}\sin^{d-1}\theta\d\theta} \biggr).
\end{align}

Now use
\begin{align*}
\int_0^\pi(\cos\theta-\eta)^{\frac{m}{m-1}}\sin^{d-1}\theta\d\theta &=\int_0^\pi(\cos\theta-\eta)^{\frac{1}{m-1}}(\cos\theta-\eta)\sin^{d-1}\theta\d\theta \\
&=\int_0^\pi(\cos\theta-\eta)^{\frac{1}{m-1}}\sin^{d-1}\cos\theta \d\theta - \eta\int_0^\pi(\cos\theta-\eta)^{\frac{1}{m-1}}\sin^{d-1}\theta\d\theta,
\end{align*}
to write
\begin{equation}
\label{eqn:g1g2p-prelim}
\begin{aligned}
 (g_1 (\eta) g_2(\eta))'  & = \frac{\frac{m}{m-1}\int_0^\pi(\cos\theta-\eta)^{\frac{1}{m-1}-1}\sin^{d-1}\theta\d\theta}{2(m-1)(d\omega_d)^{m-1}\left(\int_0^\pi(\cos\theta-\eta)^{\frac{1}{m-1}}\sin^{d-1}\theta\d\theta\right)^m}\times\\[2pt]
  &\hspace{-2.0cm}\left( -2\eta + \frac{\int_0^\pi(\cos\theta-\eta)^{\frac{1}{m-1}-1}\sin^{d-1}\theta\cos\theta\d\theta}{\int_0^\pi(\cos\theta-\eta)^{\frac{1}{m-1}-1}\sin^{d-1}\theta\d\theta} + \frac{(m-2)\int_0^\pi(\cos\theta-\eta)^{\frac{1}{m-1}}\sin^{d-1}\theta\cos\theta\d\theta}{\int_0^\pi(\cos\theta-\eta)^{\frac{1}{m-1}}\sin^{d-1}\theta\d\theta} +2\eta\right)\\
  & = \frac{\frac{m}{m-1}\int_0^\pi(\cos\theta-\eta)^{\frac{1}{m-1}-1}\sin^{d-1}\theta\d\theta}{2(m-1)(d\omega_d)^{m-1}\left(\int_0^\pi(\cos\theta-\eta)^{\frac{1}{m-1}}\sin^{d-1}\theta\d\theta\right)^m}\times\\
  &\hspace{0.0cm} \left(\frac{\int_0^\pi(\cos\theta-\eta)^{\frac{1}{m-1}-1}\sin^{d-1}\theta\cos\theta\d\theta}{\int_0^\pi(\cos\theta-\eta)^{\frac{1}{m-1}-1}\sin^{d-1}\theta\d\theta} + \frac{(m-2)\int_0^\pi(\cos\theta-\eta)^{\frac{1}{m-1}}\sin^{d-1}\theta\cos\theta\d\theta}{\int_0^\pi(\cos\theta-\eta)^{\frac{1}{m-1}}\sin^{d-1}\theta\d\theta} \right).
   %&\quad =h(\eta) \times \left( \left(\int_0^\pi (\cos\theta-\eta)^{\frac{1}{m-1}-1}\sin^{\dm-1}\theta \cos\theta\d\theta\right)\left(\int_0^\pi (\cos\theta-\eta)^{\frac{1}{m-1}}\sin^{\dm-1}\theta \d\theta\right) \right.\\
  % & \qquad \left. +(m-2)\left(\int_0^\pi (\cos\theta-\eta)^{\frac{1}{m-1}-1}\sin^{\dm-1}\theta\d\theta\right)\left(\int_0^\pi (\cos\theta-\eta)^{\frac{1}{m-1}}\sin^{\dm-1}\theta \cos\theta\d\theta\right) \right)\\
   %&= h(\eta) \left(-(m-1) \frac{H'(\eta)}{H(\eta)} \right),
\end{aligned}
\end{equation}

By a direct calculation, using \eqref{eqn:H}, one can compute
\begin{equation}
\label{eqn:HpoverH-2}
 \frac{H'(\eta)}{H(\eta)} = -\frac{1}{m-1}  \left( \frac{\int_0^\pi (\cos\theta-\eta)^{\frac{1}{m-1}-1}\sin^{\dm-1}\theta\cos\theta \d\theta}{\int_0^\pi (\cos\theta-\eta)^{\frac{1}{m-1}}\sin^{\dm-1}\theta \cos\theta \d\theta}   + (m-2)\frac{\int_0^\pi (\cos\theta-\eta)^{\frac{1}{m-1}-1}\sin^{\dm-1}\theta \d\theta}{\int_0^\pi (\cos\theta-\eta)^{\frac{1}{m-1}}\sin^{\dm-1}\theta \d\theta} \right).
\end{equation}
Then, by combining \eqref{eqn:g1g2p-prelim} and \eqref{eqn:HpoverH-2}, we write
\begin{equation}
\label{eqn:g1g2p}
(g_1 (\eta) g_2(\eta))' = h(\eta) \left(-(m-1) \frac{H'(\eta)}{H(\eta)} \right),
\end{equation}
where we used the notation 
\begin{equation}
\label{eqn:h}
\begin{aligned}
h(\eta): &= \frac{\frac{m}{m-1}\int_0^\pi(\cos\theta-\eta)^{\frac{1}{m-1}-1}\sin^{d-1}\theta\d\theta}{2(m-1)(d\omega_d)^{m-1}\left(\int_0^\pi(\cos\theta-\eta)^{\frac{1}{m-1}}\sin^{d-1}\theta\d\theta\right)^{m}} \times \frac{\int_0^\pi(\cos\theta-\eta)^{\frac{1}{m-1}}\sin^{d-1}\theta\cos\theta\d\theta}{\int_0^\pi(\cos\theta-\eta)^{\frac{1}{m-1}-1}\sin^{d-1}\theta\d\theta} \\[3pt]
&= \frac{m}{2(m-1)^2 (d\omega_d)^{m-1}} \frac{\int_0^\pi(\cos\theta-\eta)^{\frac{1}{m-1}}\sin^{d-1}\theta\cos\theta\d\theta}{\left(\int_0^\pi(\cos\theta-\eta)^{\frac{1}{m-1}}\sin^{d-1}\theta\d\theta\right)^{m}}.
\end{aligned}
\end{equation}

We will show that $h(\eta)\geq 0$ -- see below. Also, $H(\eta) \geq0$, and hence, $(g_1 (\eta) g_2(\eta))'$ and $H'(\eta)$ have opposite signs, for all $m>1$ and $-\infty < \eta <-1$. By Lemma \ref{lem:H-monotone}, $H(\eta)$ is strictly decreasing for $1<m<2$, and therefore $g_1 (\eta) g_2(\eta)$ is strictly increasing in $\eta$.  Finally, by \eqref{eqn:E-diff} and \eqref{eqn:energy-eta}, we conclude that for any $1<m<2$, the energy difference $E[\rhou] - E[\rho_{\kappa}]$ increases with $\eta_\kappa$ (and hence with $\kappa$), as $\kappa$ ranges in $\kappa_1 \leq \kappa \leq \kappa_2$.

It remains to show that $h(\eta)\geq 0$, which amounts to showing that 
\[\int_0^\pi(\cos\theta-\eta)^{\frac{1}{m-1}}\sin^{d-1}\theta\cos\theta\d\theta\geq0.
\]
Indeed, for any $0\leq\tilde{\theta}\leq\frac{\pi}{2}$, we have: $\left(\cos\left(\frac{\pi}{2}-\tilde{\theta}\right) -\eta\right)^{\frac{1}{m-1}}\geq\left(\cos\left(\frac{\pi}{2}+\tilde{\theta}\right)-\eta\right)^{\frac{1}{m-1}}$ (by the monotonicity of cosine and $m>1$), $ \sin\left(\frac{\pi}{2}-\tilde{\theta}\right)=\sin\left(\frac{\pi}{2}+\tilde{\theta}\right)$ and $ 0\leq\cos\left(\frac{\pi}{2}-\tilde{\theta}\right)=-\cos\left(\frac{\pi}{2}+\tilde{\theta}\right)$. From these relations, we infer 
\[
\int_0^\frac{\pi}{2}(\cos\theta-\eta)^{\frac{1}{m-1}}\sin^{d-1}\theta\cos\theta\d\theta\geq-\int_\frac{\pi}{2}^\pi(\cos\theta-\eta)^{\frac{1}{m-1}}\sin^{d-1}\theta\cos\theta\d\theta,
\]
where in the integrals we change variables $\theta = \frac{\pi}{2} - \tilde{\theta}$ and $\theta = \frac{\pi}{2} +\tilde{\theta}$ respectively. Hence, we find
\begin{align}
    & \int_0^\pi(\cos\theta-\eta)^{\frac{1}{m-1}}\sin^{d-1}\theta\cos\theta\d\theta \\
    & \qquad = \int_0^\frac{\pi}{2}(\cos\theta-\eta)^{\frac{1}{m-1}}\sin^{d-1}\theta\cos\theta\d\theta + \int_\frac{\pi}{2}^\pi(\cos\theta-\eta)^{\frac{1}{m-1}}\sin^{d-1}\theta\cos\theta\d\theta\\
    % &\qquad \geq \int_0^\frac{\pi}{2}(\cos\theta-\eta)^{\frac{1}{m-1}}\sin^{d-1}\theta\cos\theta\d\theta - \int_0^\frac{\pi}{2}(\cos\theta-\eta)^{\frac{1}{m-1}}\sin^{d-1}\theta\cos\theta\d\theta \\[5pt]
    & \qquad \geq 0.
\end{align}

%\begin{align}
   %&\int_0^\pi(\cos\theta+1)^{\frac{1}{m-1}}\sin^{d-1}\theta\cos\theta\d\theta\\
   %&= \int_0^\pi 2^{\frac{1}{m-1}}\cos^{\frac{2}{m-1}}\frac{\theta}{2}(2\sin\frac{\theta}{2}\cos\frac{\theta}{2})^{d-1}(2\cos^2\frac{\theta}{2}-1)\d\theta \\
   %&= 2^{\frac{1}{m-1}+d-1+1}\frac{\Gamma\left(\frac{\frac{2}{m-1}+d-1+2+1}{2}\right)\Gamma\left(\frac{d-1+1}{2}\right)}{\Gamma\left(\frac{\frac{2}{m-1}+d-1+2+d-1+2}{2}\right)} -2^{\frac{1}{m-1}+d-1}\frac{\Gamma\left(\frac{\frac{2}{m-1}+d-1+1}{2}\right)\Gamma\left(\frac{d-1+1}{2}\right)}{\Gamma\left(\frac{\frac{2}{m-1}+d-1+d-1+2}{2}\right)} \\  
   %&= 2^{\frac{1}{m-1}+d}\frac{(\frac{1}{m-1}+\frac{d}{2})\Gamma\left(\frac{\frac{2}{m-1}+d}{2}\right)\Gamma\left(\frac{d}{2}\right)}{(\frac{1}{m-1}+d)\Gamma\left(\frac{\frac{2}{m-1}+2d}{2}\right)} - 2^{\frac{1}{m-1}+d-1}\frac{\Gamma\left(\frac{\frac{2}{m-1}+d}{2}\right)\Gamma\left(\frac{d}{2}\right)}{\Gamma\left(\frac{\frac{2}{m-1}+2d}{2}\right)}\\
   %&\text{$\frac{\frac{1}{m-1}+\frac{d}{2}}{\frac{1}{m-1}+d}$ is a decreasing function of d. Hence,       $\frac{\frac{1}{m-1}+\frac{d}{2}}{\frac{1}{m-1}+d}\geq \frac{1}{2} $ }\\
   %&\geq 2^{\frac{1}{m-1}+d-1}\frac{\Gamma\left(\frac{\frac{2}{m-1}+d}{2}\right)\Gamma\left(\frac{d}{2}\right)}{\Gamma\left(\frac{\frac{2}{m-1}+2d}{2}\right)}-2^{\frac{1}{m-1}+d-1}\frac{\Gamma\left(\frac{\frac{2}{m-1}+d}{2}\right)\Gamma\left(\frac{d}{2}\right)}{\Gamma\left(\frac{\frac{2}{m-1}+2d}{2}\right)}\\
   %&= 0
%\end{align}
\smallskip

{\em Case $\kappa>\kappa_2$.} The calculations are very similar to the fully supported case, and we will only summarize them here. 

We estimate again $E[\rhou] - E[\rho_{\kappa}]$ using \eqref{eqn:E-diff}. In this case the equilibria $\rho_\kappa$ from \eqref{eqn:rhok-cs} are supported on a geodesic disk of radius $\phi_\kappa$. Recall the notations $s_\kappa = \| c_{\rho_\kappa}\|$ and $\lambda_\kappa = - \kappa s_\kappa \cos \phi_\kappa $. Using \eqref{eqn:s-phi}, \eqref{eqn:kappa-phi} and \eqref{eqn:rhok-cs}, we compute (similar to \eqref{eqn:energy-eta}):
\begin{equation}
\label{eqn:energy-phi}
\begin{aligned}
\frac{\kappa}{2} \|c_{\rho_\kappa}\|^2 - \frac{1}{m-1}\int_{\bbs^\dm} \rho_{\kappa}^m(x) \dS(x) &= \frac{m}{2(m-1)(d\omega_d)^{m-1}} \frac{\int_0^{\phi_\kappa}(\cos\theta-\cos \phi_\kappa)^{\frac{1}{m-1}}\sin^{d-1}\theta\cos\theta\d\theta}{\left(\int_0^{\phi_\kappa}(\cos\theta-\cos \phi_\kappa)^{\frac{1}{m-1}}\sin^{d-1}\theta\d\theta \right)^{m}}\\[3pt]
&\quad-\frac{1}{(m-1) (\dm w_\dm)^{m-1}}\frac{\int_0^{\phi_\kappa}(\cos\theta-\cos \phi_\kappa)^{\frac{m}{m-1}}\sin^{d-1}\theta\d\theta}{\left(\int_0^{\phi_\kappa}(\cos\theta-\cos \phi_\kappa)^{\frac{1}{m-1}}\sin^{d-1}\theta\d\theta\right)^m}\\[3pt]
&=\bar{g}_1(\phi_\kappa)\bar{g}_2(\phi_\kappa),
\end{aligned}
\end{equation}
where 
\begin{equation}
\label{eqn:g1g2bar}
\begin{aligned}
    \bar{g}_1(\phi) &= m\int_0^{\phi}(\cos\theta-\cos \phi)^{\frac{1}{m-1}}\sin^{d-1}\theta\cos\theta\d\theta -2 \int_0^{\phi}(\cos\theta-\cos \phi)^{\frac{m}{m-1}}\sin^{d-1}\theta\d\theta,\\[5pt]
    \bar{g}_2(\phi) &= \frac{1}{2(m-1)(d\omega_d)^{m-1}\left(\int_0^{\phi} (\cos\theta-\cos \phi)^{\frac{1}{m-1}}\sin^{d-1}\theta\d\theta\right)^m}.
\end{aligned}
\end{equation}
In computing $(\bar{g}_1(\phi) \bar{g}_2(\phi))'$ we notice that the same calculations as in \eqref{eqn:g1p} and \eqref{eqn:g2p} appear, with $\eta$ being replaced by $\cos \phi$ and an additional $-\sin \phi$ that comes from the derivative of $\cos \phi$. We find 
\begin{equation}
\label{eqn:g1bg2bp}
\begin{aligned}
 (\bar{g}_1 (\phi) \bar{g}_2(\phi))'  &= \frac{- \sin \phi \, \frac{m}{m-1}\int_0^\phi(\cos\theta-\cos \phi)^{\frac{1}{m-1}-1}\sin^{d-1}\theta\d\theta}{2(m-1)(d\omega_d)^{m-1}\left(\int_0^\phi(\cos\theta-\cos \phi)^{\frac{1}{m-1}}\sin^{d-1}\theta\d\theta\right)^m}\times\\[2pt]
  &\hspace{-1.5cm}\left(\frac{\int_0^\phi(\cos\theta-\cos \phi)^{\frac{1}{m-1}-1}\sin^{d-1}\theta\cos\theta\d\theta}{\int_0^\phi(\cos\theta-\cos \phi)^{\frac{1}{m-1}-1}\sin^{d-1}\theta\d\theta} + \frac{(m-2)\int_0^\phi(\cos\theta-\cos \phi)^{\frac{1}{m-1}}\sin^{d-1}\theta\cos\theta\d\theta}{\int_0^\phi(\cos\theta-\cos \phi)^{\frac{1}{m-1}}\sin^{d-1}\theta\d\theta} \right)\\
   &= \bar{h}(\phi) \left(-(m-1) \frac{F'(\phi)}{F(\phi)} \right),
\end{aligned}
\end{equation}
where we used \eqref{eqn:dlnF} and the notation
\begin{equation}
\label{eqn:barh}
\begin{aligned}
\bar{h}(\phi):&= \frac{\frac{m}{m-1}\int_0^\phi(\cos\theta-\cos \phi)^{\frac{1}{m-1}-1}\sin^{d-1}\theta\d\theta}{2(m-1)(d\omega_d)^{m-1}\left(\int_0^\phi(\cos\theta-\cos \phi)^{\frac{1}{m-1}}\sin^{d-1}\theta\d\theta\right)^{m}} \times \frac{\int_0^\phi(\cos\theta-\cos \phi)^{\frac{1}{m-1}}\sin^{d-1}\theta\cos\theta\d\theta}{\int_0^\phi(\cos\theta-\cos \phi)^{\frac{1}{m-1}-1}\sin^{d-1}\theta\d\theta} \\[3pt]
&= \frac{m}{2(m-1)^2(d\omega_d)^{m-1}} \frac{\int_0^\phi(\cos\theta-\cos \phi)^{\frac{1}{m-1}}\sin^{d-1}\theta\cos\theta\d\theta}{\left(\int_0^\phi(\cos\theta-\cos \phi)^{\frac{1}{m-1}}\sin^{d-1}\theta\d\theta\right)^{m}}.
\end{aligned}
\end{equation}

We will show that $\bar{h}(\phi)\geq 0$ -- see below. As $F(\phi) \geq0$, we then find that $(\bar{g}_1 (\phi) \bar{g}_2(\phi))'$ and $F'(\phi)$ have opposite signs, for all $m>1$ and $0 < \phi <\pi$. By Lemma \ref{lem:F-monotone}, $F(\phi)$ is strictly increasing in $\phi$ for $1<m<2$, and therefore $\bar{g}_1 (\eta) \bar{g}_2(\phi)$ is strictly decreasing in $\phi$.  By \eqref{eqn:E-diff} and \eqref{eqn:energy-phi}, we conclude that for any $1<m<2$, the energy difference $E[\rhou] - E[\rho_{\kappa}]$ decreases with $\phi_\kappa$. As $\kappa$ takes values from $\kappa_2$ to $\infty$, $\phi_\kappa$ ranges from $\pi$ to $0$ (hence $\phi_\kappa$ decreases with $\kappa$). We conclude that $E[\rhou] - E[\rho_{\kappa}]$ increases with $\kappa$, for $\kappa >\kappa_2$.

It remains to show that $\bar{h}(\phi)\geq 0$, which amounts to showing that 
\begin{equation}
\label{eqn:int-g0}
\int_0^\phi(\cos\theta-\cos \phi)^{\frac{1}{m-1}}\sin^{d-1}\theta\cos\theta\d\theta\geq0.
\end{equation}
If $\phi\leq\frac{\pi}{2}$, then the inequality is immediate, as the integrand is non-negative. For $\frac{\pi}{2}< \phi < \pi$, we note that for any $0<\tilde{\theta}< \phi - \frac{\pi}{2}$, we have
\begin{multline}
 \left(\cos\left( \frac{\pi}{2}-\tilde{\theta}\right)-\cos\phi\right)^{\frac{1}{m-1}}\sin^{d-1}\left(\frac{\pi}{2}-\tilde{\theta}\right)\cos\left(\frac{\pi}{2}-\tilde{\theta}\right) \geq \\
   -\left(\cos\left(\frac{\pi}{2}+\tilde{\theta}\right)-\cos\phi\right)^{\frac{1}{m-1}}\sin^{d-1}\left(\frac{\pi}{2}+\tilde{\theta}\right)\cos\left(\frac{\pi}{2}+\tilde{\theta}\right),
\end{multline}
as it follows from the monotonicity of cosine, and $\cos\left(\frac{\pi}{2}+\tilde{\theta}\right) = -\cos\left(\frac{\pi}{2}-\tilde{\theta}\right)$, $\sin^{d-1}\left(\frac{\pi}{2}+\tilde{\theta}\right)=\sin^{d-1}\left(\frac{\pi}{2}-\tilde{\theta}\right)$. Therefore, by denoting $\phi=\frac{\pi}{2}+\phi_1$, we find
\begin{align}
    & \int_0^\phi(\cos\theta-\cos\phi)^{\frac{1}{m-1}}\sin^{d-1}\theta\cos\theta\d\theta 
    = \int_0^{\frac{\pi}{2}-\phi_1}(\cos\theta-\cos\phi)^{\frac{1}{m-1}}\sin^{d-1}\theta\cos\theta\d\theta \\ 
    & \qquad + \int_{\frac{\pi}{2}-\phi_1}^{\frac{\pi}{2}}(\cos\theta-\cos\phi)^{\frac{1}{m-1}}\sin^{d-1}\theta\cos\theta\d\theta + \int_{\frac{\pi}{2}}^{\frac{\pi}{2}+\phi_1}(\cos\theta-\cos\phi)^{\frac{1}{m-1}}\sin^{d-1}\theta\cos\theta\d\theta \\
    & \qquad \geq \int_0^{\frac{\pi}{2}-\phi_1}(\cos\theta-\cos\phi)^{\frac{1}{m-1}}\sin^{d-1}\theta\cos\theta\d\theta \\
    & \qquad \geq 0.
\end{align}

{\em Conclusion.} To conclude, we have shown that the energy difference $E[\rhou] - E[\rho_{\kappa}]$ increases with $\kappa$, as $\kappa$ ranges in $\kappa_1 \leq \kappa \leq \kappa_2$ (where $\rho_\kappa$ has full support in $\bbs^\dm$), as well as in $\kappa > \kappa_2$ (where $\rho_\kappa$ has strict support in $\bbs^\dm$). Since at $\kappa = \kappa_1$, $\rho_\kappa$ coincides with $\rhou$, the energy difference is zero there. We conclude that
\[
E[\rhou]>E[\rho_\kappa], \quad \text{ for all } \kappa>\kappa_1.
\]
\end{proof}

\begin{remark}
\label{rmk:calc-extend}
We note that calculations in the proof of Theorem \ref{thm:gminimizers} which lead to \eqref{eqn:g1g2p} (for equilibria of full support) and \eqref{eqn:g1bg2bp} (for equilibria of strict support), apply to any $m>1$. Hence, for easier reference, we summarize the conclusions inferred from \eqref{eqn:g1g2p} and \eqref{eqn:g1bg2bp}: a) $\displaystyle (g_1 (\eta) g_2(\eta))'$ and $H'(\eta)$ have opposite signs, for all $m>1$ and $-\infty < \eta <-1$, and b) $\displaystyle (\bar{g}_1 (\phi) \bar{g}_2(\phi))'$ and $F'(\phi)$ have opposite signs, for all $m>1$ and $0 < \phi <\pi$. These results will be used in Sections \ref{sect:m2} and \ref{sect:bifurcations-mg2} for the case $m \geq 2$.
\end{remark}
%%%%%

\subsection{Numerical illustrations}
\label{subsect:numerics-mg1}
We illustrate Proposition \ref{prop:bif-mg1} with some numerical results. In the numerics we have used $m=1.5$ and $\dm=2$. Figure \ref{fig:m15-se}(a) illustrates the bifurcation at $\kappa_1$ and also indicates the second critical value $\kappa_2$. For this choice of $m$ and $\dm$ we have $\kappa_1 \approx 1.2694$ and $\kappa_2 \approx 1.4658$. In blue we plot the uniform distribution, which is stable for $\kappa<\kappa_1$, and unstable otherwise. In red we plot $s_\kappa = \|\rho_\kappa\|$ for the equilibria \eqref{eqn:rhok-fs} ($\kappa_1< \kappa <\kappa_2$) and \eqref{eqn:rhok-cs} ($\kappa >\kappa_2$). The transition at $\kappa_2$ is indicated by a black diamond. Also, Theorem \ref{thm:gminimizers} is illustrated in Figure \ref{fig:m15-se}(b); note how the energy difference $E[\rhou] - E[\rho_{\kappa}]$ increases with $\kappa$.

In Figure \ref{fig:m15-rhok}(a) we show the size of the support $\phi_\kappa$ of equilibria \eqref{eqn:rhok-cs}, and in Figure \ref{fig:m15-rhok}(b) we plot the equilibria at several values of $\kappa$. Note that at $\kappa=\kappa_1$, the equilibria is simply the uniform distribution (i.e., constant on $\bbs^\dm$). As $\kappa$ increases, $\rho_k$ becomes more and more concentrated at $\theta=0$. However, as seen in plot (a), the size of the support $\phi_\kappa$ has a rather slow convergence to $0$.

\begin{figure}[htbp]
 \begin{center}
 \begin{tabular}{cc}
 \includegraphics[width=0.48\textwidth]{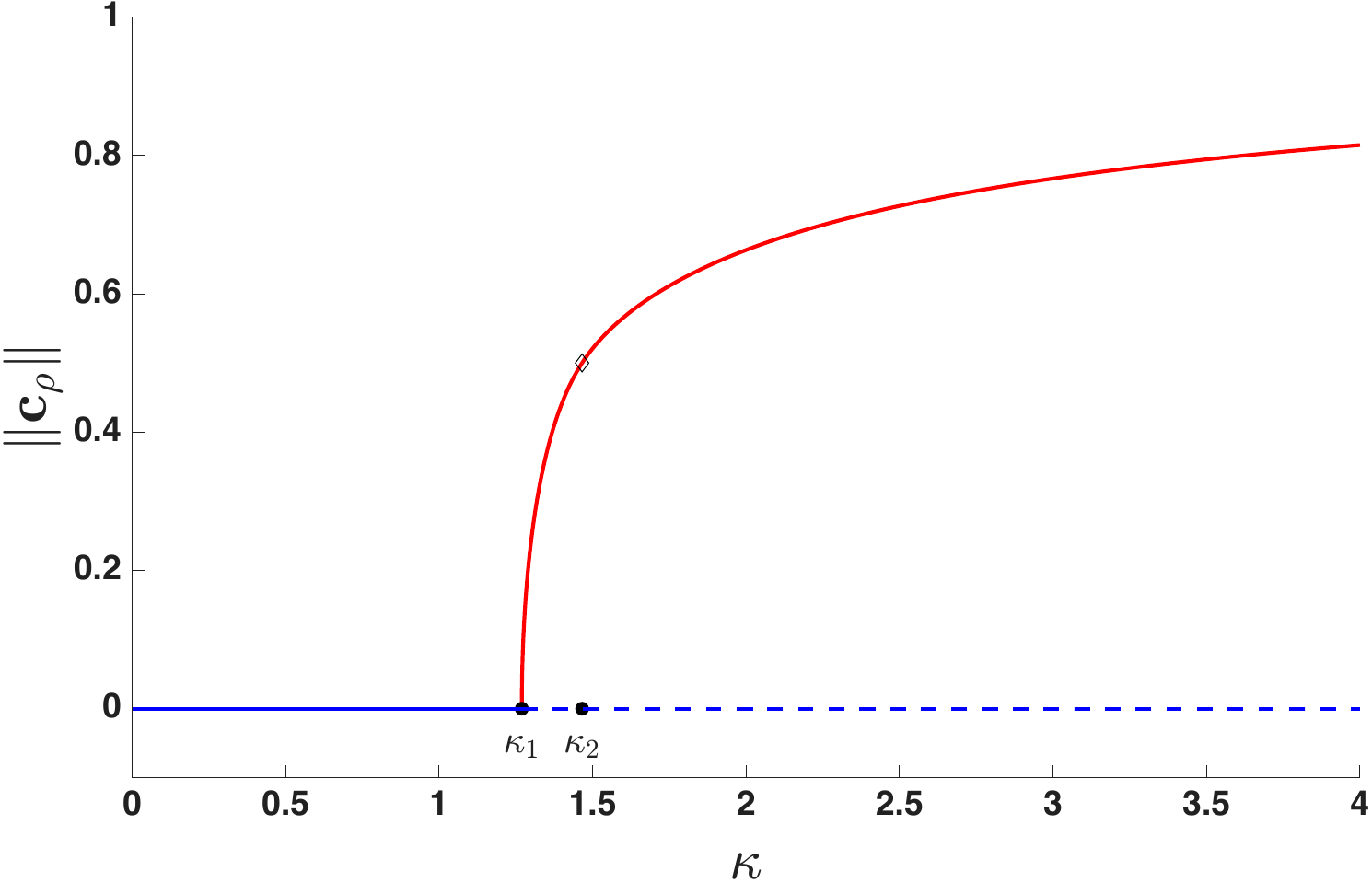} & 
 \includegraphics[width=0.48\textwidth]{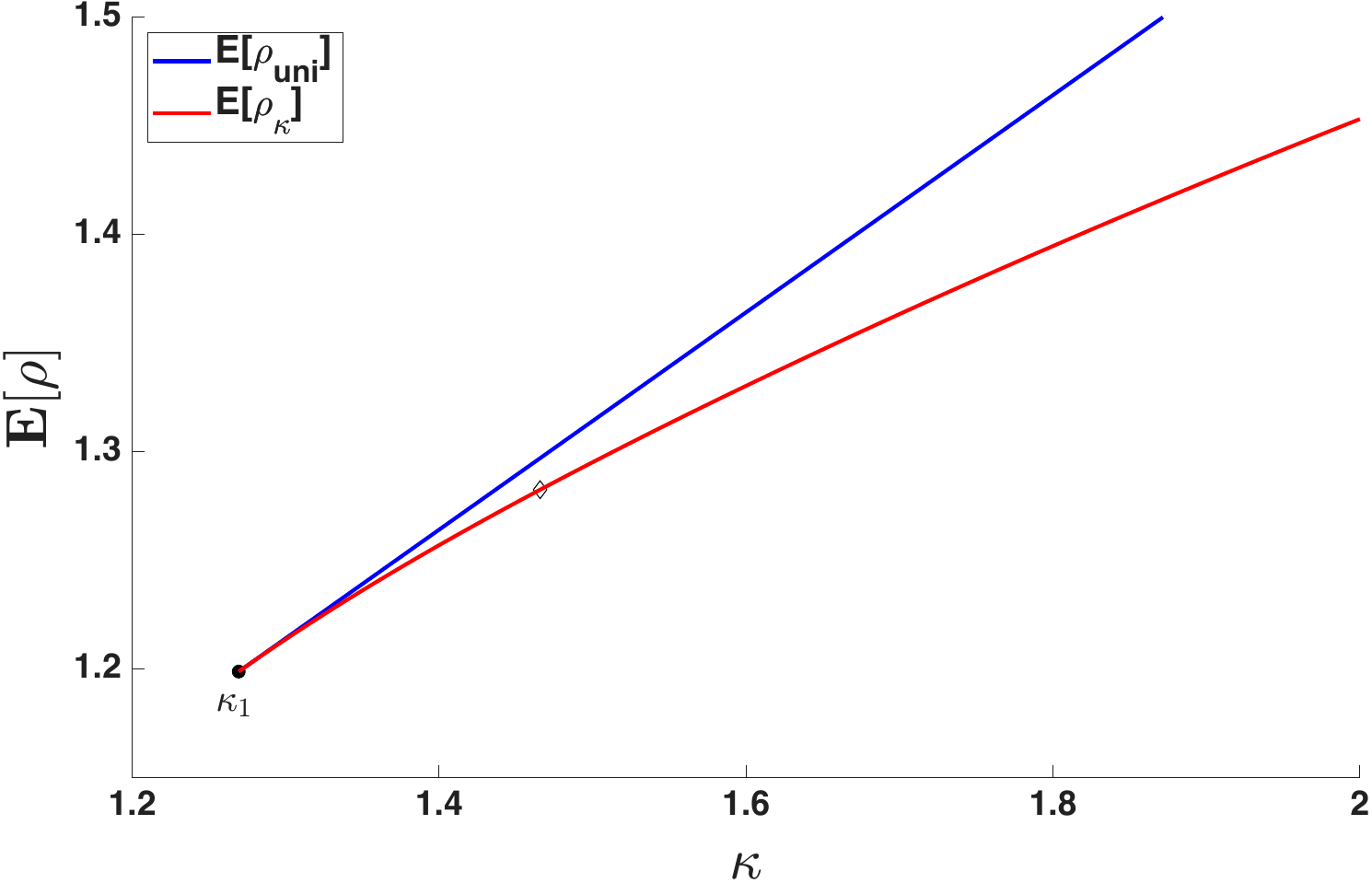} \\
 (a) & (b)
\end{tabular}
\caption{Case $1<m<2$. (a) Plot of the norm of the centre of mass of $\rhou$ (blue) and $\rho_\kappa$ (red) -- see Proposition \ref{prop:bif-mg1}. At $\kappa=\kappa_1$, a fully supported equilibrium $\rho_\kappa$ in the form \eqref{eqn:rhok-fs} emerges from the uniform distribution. At $\kappa=\kappa_2$, $\rho_\kappa$ changes from being fully supported to having support strictly contained on $\bbs^\dm$ -- see \eqref{eqn:rhok-cs}; this transition is indicated by a black diamond. (b) Plot of the energies of $\rhou$ (blue) and $\rho_\kappa$ (red) for $\kappa>\kappa_1$. The global energy minimizer is $\rho_\kappa$ -- see Theorem \ref{thm:gminimizers}  The numerical simulations correspond to $m=1.5$, $\dm=2$.}
\label{fig:m15-se}
\end{center}
\end{figure}

\begin{figure}[htbp]
 \begin{center}
 \begin{tabular}{cc}
\includegraphics[width=0.48\textwidth]{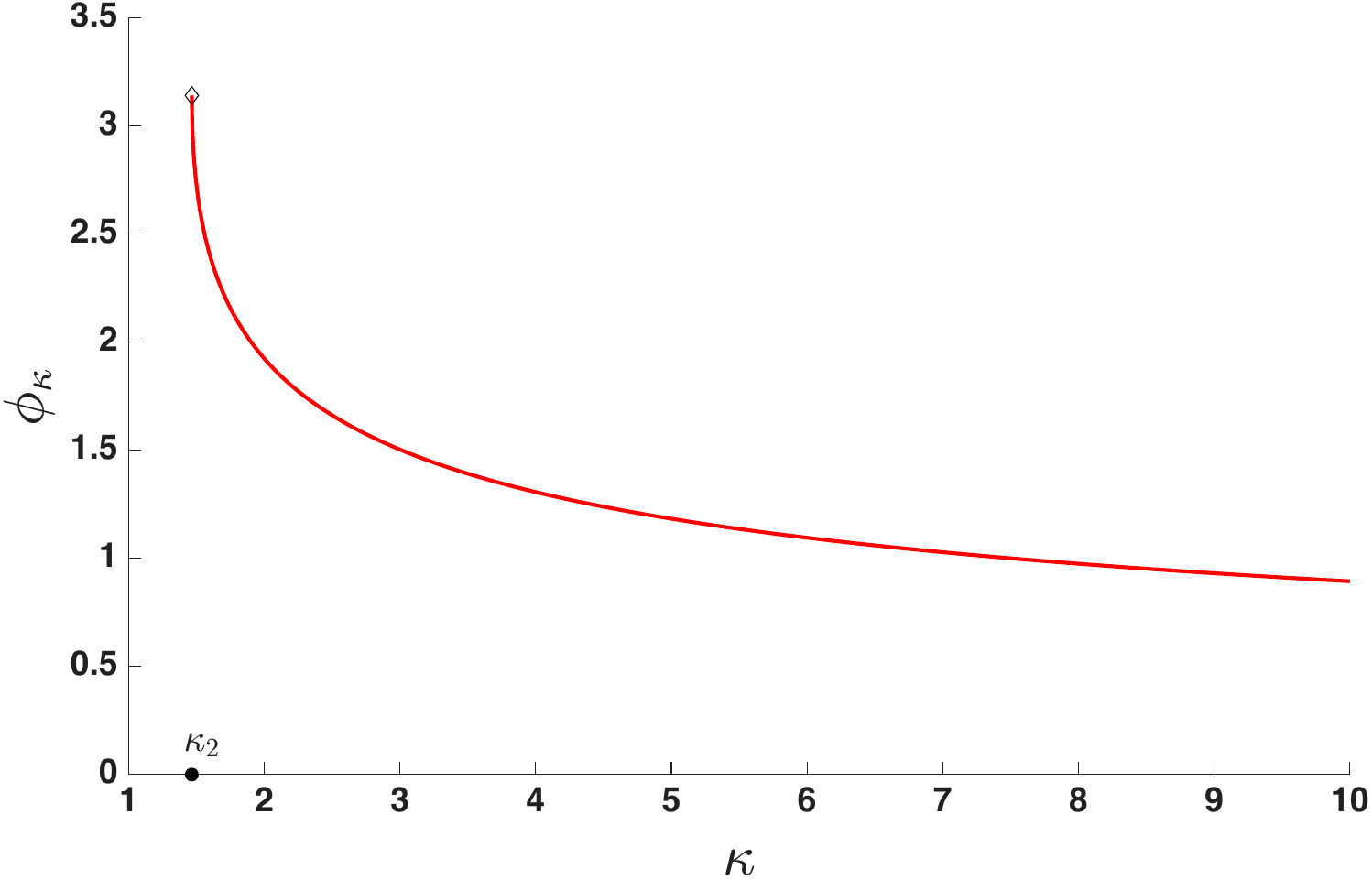} &
\includegraphics[width=0.48\textwidth]{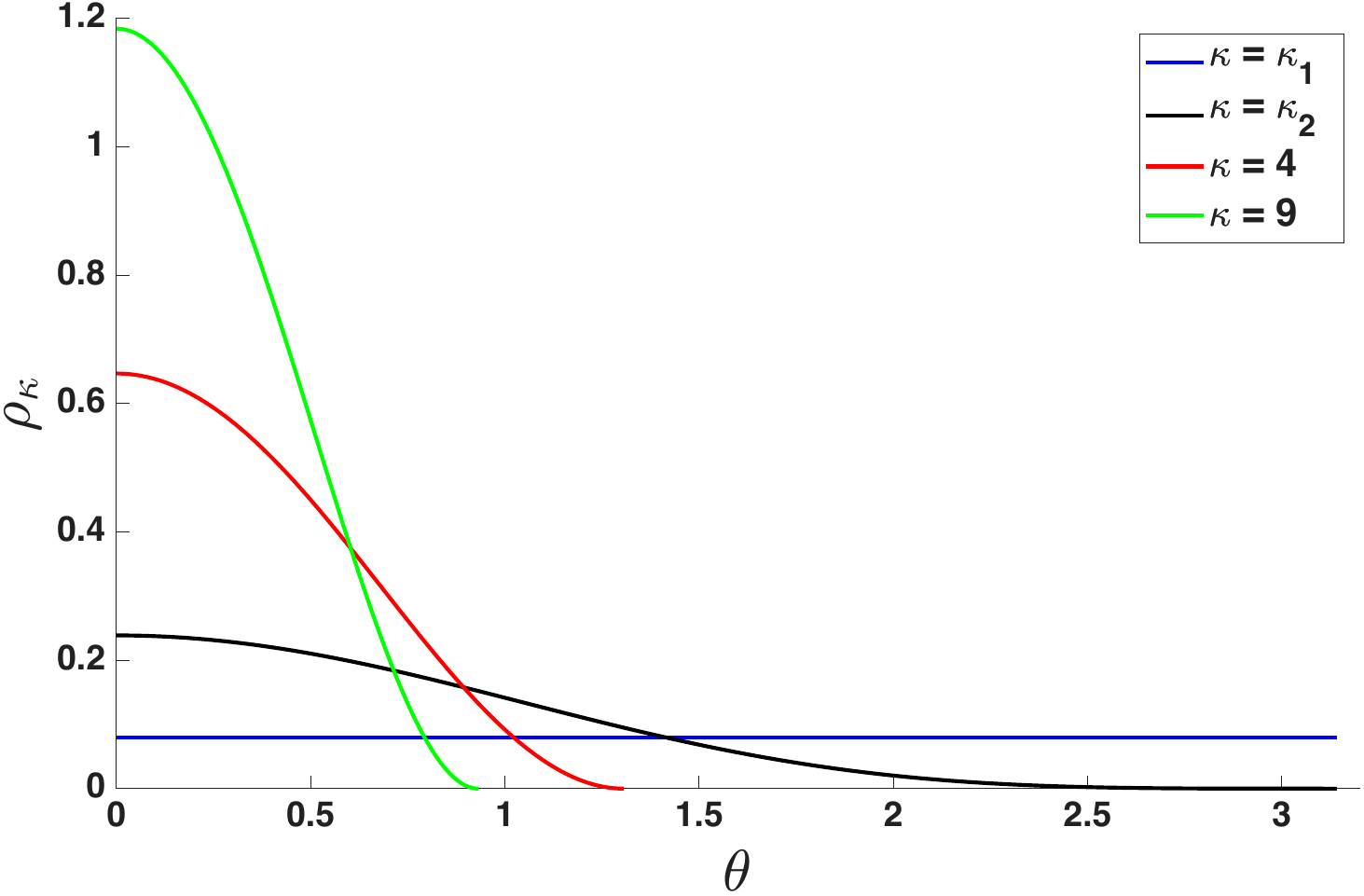} \\
\end{tabular}
\caption{ Case $1<m<2$. (a) Plot of $\phi_\kappa$, the size of the support of equilibria \eqref{eqn:rhok-cs}. (b) Plot of equilibria $\rho_\kappa$ (see \eqref{eqn:rhok-fs} and \eqref{eqn:rhok-cs}) for various values of $\kappa$. Note that at $\kappa=\kappa_1$, the equilibrium is the uniform distribution. The equilibria $\rho_k$ become more and more concentrated at $\theta=0$ as $\kappa$ increases. We used $m=1.5$ and $\dm = 2$.}
\label{fig:m15-rhok}
\end{center}
\end{figure}

%%%%%%%%%%%%%%%

\section{Case $m=2$}
\label{sect:m2}
We consider separately the case $m=2$, which turns out to be degenerate in a sense that we will explain. 

\subsection{Equilibria and critical $\kappa$}
\label{subsect:equil-m2}
Consider first equilibria that are fully supported on $\bbs^\dm$ -- see case a) in Section \ref{sect:cp}, in particular equations \eqref{eqn:equil-fs} and \eqref{eqn:system-fs-mg1}. Use again the notation $s=\|c_\rho\|$. Equations \eqref{eqn:system-fs-mg1} become
\begin{equation*}
%\label{eqn:system-fs-m2}
\begin{aligned}
1 & =\frac{\dm w(\dm)}{2}\int_0^\pi (\lambda+\kappa s\cos\theta)\sin^{\dm-1}\theta\d\theta,\\
s & =\frac{\dm w(\dm)}{2}\int_0^\pi (\lambda+\kappa s\cos\theta)\sin^{\dm-1}\theta\cos\theta\d\theta.
\end{aligned}
\end{equation*}
However, some of the integrals above are zero, so the system simplifies to 
\begin{equation}
\label{eqn:system-fs-m2}
\begin{aligned}
 1&=\lambda\left(\frac{\dm w(\dm)}{2}\int_0^\pi\sin^{\dm-1}\theta\d\theta\right),\\[5pt]
 s&=\kappa s\left(\frac{\dm w(\dm)}{2}\int_0^\pi \sin^{\dm-1}\theta\cos^2\theta\d\theta\right).
 \end{aligned}
\end{equation}

From the first equation in \eqref{eqn:system-fs-m2}, one can determine $\lambda$:
\begin{equation}
\label{eqn:lambda-m2}
\lambda = \left(\frac{\dm w(\dm)}{2}\int_0^\pi \sin^{\dm-1}\theta \d\theta\right)^{-1} = \frac{2}{|\bbs^\dm|}.
\end{equation}
Using \eqref{eqn:cossin-rel}, the second equation in \eqref{eqn:system-fs-m2} can also be written as 
\begin{equation}
\label{eqn:s-simple}
s=\kappa s \, \frac{|\bbs^\dm|}{2 (\dm+1)}.
\end{equation}
For $m=2$, the uniform distribution loses stability (see \eqref{eqn:kappa1}) at 
\begin{equation}
\label{eqn:kappa1-m2}
\kappa_1 =  \frac{2 (\dm+1)}{|\bbs^\dm|}.
\end{equation}
Then, the second equation in \eqref{eqn:system-fs-m2} (or equivalently,  \eqref{eqn:s-simple}) is satisfied trivially for $\kappa=\kappa_1$, and it has only the solution $s=0$ (corresponding to the uniform distribution) for $\kappa \neq \kappa_1$.

Recall that for fully supported equilibria, $\lambda \geq \kappa s$. Hence, for $\kappa=\kappa_1$, using \eqref{eqn:lambda-m2} we get that  $s$ can be any value that satisfies:
\[
0\leq s \leq \frac{\lambda}{\kappa_1}
%frac{\frac{\dm w(\dm)}{2}\int_0^\pi \sin^{\dm-1}\cos^2\theta\d\theta}{\frac{\dm w(\dm)}{2}\int_0^\pi \sin^{\dm-1}\theta\d\theta}
%=\frac{\int_0^\pi\sin^{\dm-1}\theta\cos^2\theta\d\theta}{\int_0^\pi\sin^{\dm-1}\theta \d\theta}=1-\frac{\dm}{\dm+1}
=\frac{1}{\dm+1}.
\]
Therefore, for $\kappa=\kappa_1$ there is a continuum family of fully supported equilibria in the form \eqref{eqn:equil-fs}, where $\lambda$ is given by \eqref{eqn:lambda-m2} and $s$ can take any value in the range $0 \leq s \leq \frac{1}{\dm+1}$.

Look now into equilibria with support strictly contained in $\bbs^\dm$ -- see case b) in Section \ref{sect:cp}, equations \eqref{eqn:equil-cs} and \eqref{eqn:system-cs-mg1}. In this case, $-\kappa s<\lambda < \kappa s$. Equations \eqref{eqn:system-cs-mg1} become
\begin{equation}
\label{eqn:system-cs-m2}
\begin{aligned}
1&=\frac{\dm w(\dm)}{2}\int_0^\phi (\lambda+\kappa s\cos\theta)\sin^{\dm-1}\theta\d\theta,\\[5pt]
s&=\frac{\dm w(\dm)}{2}\int_0^\phi (\lambda+\kappa s\cos\theta)\sin^{\dm-1}\theta\cos\theta\d\theta.
\end{aligned}
\end{equation}

By substituting $\lambda=-\kappa s\cos\phi$ in \eqref{eqn:system-cs-m2}, we get
\begin{equation}
\label{two-2}
\begin{aligned}
1 & =\frac{\dm w(\dm)}{2}\; \kappa s\left(-\cos\phi\int_0^\phi \sin^{\dm-1}\theta \d\theta+\int_0^\phi \cos\theta\sin^{\dm-1}\theta\d\theta\right),\\
s &=\frac{\dm w(\dm)}{2}\; \kappa s\left(-\cos\phi\int_0^\phi \sin^{\dm-1}\theta\cos\theta\d\theta+\int_0^\phi \cos^2\theta\sin^{\dm-1}\theta\d\theta\right).
\end{aligned}
\end{equation}

By noting that
\begin{align}\label{X-1}
\int_0^\phi \cos^2\theta \sin^{\dm-1}\d\theta-\cos\phi\int_0^\phi \sin^{\dm-1}\theta\cos\theta \d\theta=\frac{1}{d}\int_0^\phi \sin^{\dm+1}\theta\d\theta,
\end{align}
and canceling $s$, the second equation in \eqref{two-2} reduces to
\[
1=\frac{w(\dm)\kappa}{2}\int_0^\phi \sin^{\dm+1}\theta \d\theta,
\]
or equivalently,
\begin{equation}
\label{eqn:kappa-phi-m2}
\kappa^{-1}=\frac{w(\dm)}{2}\int_0^\phi\sin^{\dm+1}\theta\d\theta.
\end{equation}
This is in fact equation \eqref{eqn:kappa-phi} for $m=2$. Note that 
\[
\frac{w(\dm)}{2}\int_0^\pi \sin^{\dm+1}\theta\d\theta=\frac{ w(\dm)}{2} \frac{\dm}{\dm+1}\int_0^\pi \sin^{\dm-1}\theta \d\theta=\kappa_1^{-1}.
\]
Then, since $\phi$ ranges from 0 to $\pi$, if $\kappa^{-1}>\kappa_1^{-1}$ (or $\kappa<\kappa_1$), there is no $\phi$ that satisfies \eqref{eqn:kappa-phi-m2}. On the other hand, for any $\kappa > \kappa_1$, there is a unique $\phi\in[0, \pi]$; this follows from the intermediate value property and the property of increasing functions. 

With $\phi$ determined uniquely in terms of $\kappa$, we can find $s$ from the first equation in \eqref{two-2}:
\begin{equation}
\label{eqn:s-phi-m2}
s=\frac{2}{\dm w(\dm)\kappa }\left(-\cos\phi\int_0^\phi \sin^{\dm-1}\theta\d\theta+\int_0^\phi\cos\theta\sin^{\dm-1}\theta\d\theta\right)^{-1},
\end{equation}
and then set $\lambda=-\kappa s\cos\phi$.

\begin{remark}
\label{rmk:kappa1-m2}
For $m=2$, the function $H(\eta)$ from \eqref{eqn:H} is constant in $\eta$, as the second term in the r.h.s. is $1$, and the first term does not depend on $\eta$, since
\[
\int_0^\pi (\cos\theta-\eta) \sin^{\dm-1}\theta \cos\theta\d\theta =  \int_0^\pi \cos^2 \theta \sin^{\dm-1} \theta \d \theta.
\]
Hence, $\kappa_1 = \kappa_2$ in this case; note that one can also check that $\kappa_1=\kappa_2$ by substituting $m=2$ into \eqref{eqn:kappa1} and \eqref{eqn:kappa2-alt}.
\end{remark}

%{\color{red}
%Using \eqref{eqn:kappa1}, we can then write the ratio between the two critical values as
%\[
%\frac{\kappa_2}{\kappa_1}=\frac{\frac{1}{m-1}+\dm}{2^{1+(\dm-1)(m-1)}(\dm+1)}\left(\frac{\Gamma\left(\frac{1}{2}\right)\Gamma\left(\frac{1}{m-1}+\dm\right)}{\Gamma\left(\frac{1}{m-1}+\frac{\dm}{2}\right)\Gamma\left(\frac{\dm+1}{2}\right)}\right)^{m-1}.
%\]

%We can also check that if $m=2$, $\frac{\kappa_2}{\kappa_1}=\frac{1}{2^\dm}\left(\frac{\Gamma(1/2)\Gamma(\dm+1)}{\Gamma(\dm/2+1)\Gamma((\dm+1)/2)}\right)=1$ from the property of the Gamma function.
%}

The considerations above can be collected in the following proposition. 
\begin{proposition}[Critical $\kappa$ and equilibria for $m=2$]
\label{prop:bif-m2}
For $m=2$ and $\dm \geq 1$, there exists only one critical value $\kappa_1$ (see \eqref{eqn:kappa1} for $m=2$) such that:

\noindent i) The uniform distribution $\rhou$ is the only equilibrium for $\kappa<\kappa_1$. At $\kappa=\kappa_1$ , there exists a family of fully supported equilibria in the form \eqref{eqn:equil-fs} given by 
\begin{equation}
\label{eqn:rhok-fs-m2}
\rho_{\kappa_1}(x; s)=\frac{1}{2} (\lambda +\kappa_1 s \cos\theta_x),\qquad\forall x\in \bbs^\dm,
\end{equation}
where $\lambda$ is given by \eqref{eqn:lambda-m2} and $s$ can take any value in the range $0 \leq s \leq \frac{1}{\dm+1}$. 
\smallskip

\noindent ii) For any $\kappa>\kappa_1$ there exists a unique equilibrium with support strictly contained in $\bbs^\dm$. This equilibrium is given by (see \eqref{eqn:equil-cs}):
\begin{equation}
\label{eqn:rhok-cs-m2}
\rho_\kappa(x)=\begin{cases}
\frac{1}{2} \left(\lambda_\kappa+\kappa s_\kappa\cos\theta_x\right),\qquad&\text{ if }0\leq\theta_x\leq \arccos\left(-\frac{\lambda_\kappa}{\kappa s_\kappa}\right),\\[5pt]
0,\qquad&\text{ otherwise},
\end{cases}
\end{equation}
with $s_\kappa$ and $\lambda_\kappa$ uniquely determined by $\kappa$ -- see \eqref{eqn:kappa-phi-m2} and \eqref{eqn:s-phi-m2}.
\end{proposition}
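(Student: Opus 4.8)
The plan is to specialize the general classification of equilibria from Section~\ref{sect:cp} to $m=2$ and to exploit the algebraic degeneracy that this exponent produces. Any critical point of $E$ solves the Euler--Lagrange equation \eqref{eqn:equil}, which for $m=2$ becomes $2\rho(x)-\kappa\langle c_\rho,x\rangle=\lambda$ on $\mathrm{supp}(\rho)$; thus $\rho$ is affine in $\langle c_\rho,x\rangle=\|c_\rho\|\cos\theta_x$, and every equilibrium is either the uniform distribution \eqref{eqn:rho-uni} (when $c_\rho=0$) or rotationally symmetric about the axis $c_\rho/\|c_\rho\|$. Hence the dichotomy between equilibria fully supported on $\bbs^\dm$ (case a of Section~\ref{sect:cp}) and those with strict support (case b) is exhaustive, and it suffices to solve the two systems \eqref{eqn:system-fs-mg1} and \eqref{eqn:system-cs-mg1} with $m=2$.

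For the fully supported branch I would substitute $m=2$ into \eqref{eqn:system-fs-mg1}. Because the integrand $\lambda+\kappa s\cos\theta$ is affine in $\cos\theta$, the contributions of odd and even parity decouple: the mass equation reduces to a linear equation for $\lambda$ alone, giving $\lambda=2/|\bbs^\dm|$, while the centre-of-mass equation reduces---after the identity \eqref{eqn:cossin-rel}---to $s=\kappa s\,|\bbs^\dm|/(2(\dm+1))$, i.e.\ $s(1-\kappa/\kappa_1)=0$ with $\kappa_1$ as in \eqref{eqn:kappa1-m2}. So either $s=0$, which is the uniform distribution and is admissible for every $\kappa>0$, or $\kappa=\kappa_1$, in which case $s$ is left free by the mass and centre-of-mass constraints. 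Imposing the nonnegativity condition $\lambda\ge\kappa\|c_\rho\|$ for \eqref{eqn:equil-fs} then confines $s$ to $[0,\lambda/\kappa_1]=[0,1/(\dm+1)]$, which is precisely the continuum family \eqref{eqn:rhok-fs-m2}. This establishes part i) and shows that no fully supported, non-uniform equilibrium exists for $\kappa\neq\kappa_1$.

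For the strictly supported branch I would substitute $m=2$ into \eqref{eqn:system-cs-mg1} and set $\lambda=-\kappa s\cos\phi$ with $\phi\in(0,\pi)$; here $s=\|c_\rho\|>0$ is automatic, since the strict-support regime requires $-\kappa\|c_\rho\|<\lambda<\kappa\|c_\rho\|$. Expanding the integrands and using the integration-by-parts identity \eqref{X-1}, one can cancel $s$ from the centre-of-mass equation, which collapses to $\kappa^{-1}=\tfrac{w(\dm)}{2}\int_0^\phi\sin^{\dm+1}\theta\,\d\theta$, i.e.\ \eqref{eqn:kappa-phi-m2}. The right-hand side is strictly increasing in $\phi$, vanishes at $\phi=0$, and---using $\int_0^\pi\sin^{\dm+1}\theta\,\d\theta=\tfrac{\dm}{\dm+1}\int_0^\pi\sin^{\dm-1}\theta\,\d\theta$---equals $\kappa_1^{-1}$ at $\phi=\pi$. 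By the intermediate value theorem and strict monotonicity, each $\kappa>\kappa_1$ gives a unique $\phi_\kappa\in(0,\pi)$, while no $\phi\in(0,\pi)$ works for $\kappa\le\kappa_1$. With $\phi_\kappa$ known, $s_\kappa$ is read off from the mass equation as in \eqref{eqn:s-phi-m2} and $\lambda_\kappa=-\kappa s_\kappa\cos\phi_\kappa$, yielding \eqref{eqn:rhok-cs-m2}. Collecting the two branches then gives the full picture, and one checks for consistency that the limiting value $\phi_\kappa\to\pi$ (attained as $\kappa\searrow\kappa_1$) corresponds to $\lambda=\kappa s$, matching the top endpoint $s=1/(\dm+1)$ of the continuum family.

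I expect the main obstacle to be not any single computation but the careful bookkeeping of the degeneracy: the rank drop in system \eqref{eqn:system-fs-mg1} at $m=2$ is what causes the $s$-equation to force $\kappa=\kappa_1$ rather than to determine $s$, hence the appearance of a one-parameter family exactly at $\kappa_1$ and the complete absence of a fully supported non-uniform branch elsewhere---in sharp contrast to the case $1<m<2$. The points that require genuine care are the translation of the admissibility inequalities into the stated ranges of $s$ and $\phi$, the verification that $s=0$ in the strict-support system only reproduces the (excluded) uniform distribution, and the matching of the two branches at $\kappa=\kappa_1$; everything else is a direct specialization of Section~\ref{sect:cp} together with elementary monotonicity arguments.
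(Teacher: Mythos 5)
Your proposal is correct and follows essentially the same route as the paper: specializing systems \eqref{eqn:system-fs-mg1} and \eqref{eqn:system-cs-mg1} to $m=2$, exploiting the parity decoupling so that the centre-of-mass equation degenerates to $s(1-\kappa/\kappa_1)=0$ (yielding the continuum family at $\kappa=\kappa_1$ constrained by $\lambda\ge\kappa s$), and for the strict-support branch cancelling $s$ via the identity \eqref{X-1} to reduce to the monotone equation \eqref{eqn:kappa-phi-m2}. The endpoint matching you note at $\phi=\pi$ is exactly the observation the paper records in Remark \ref{rmk:bif-m2}.
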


\begin{remark}
\label{rmk:bif-m2}   
The bifurcation for $m=2$ is degenerate in the sense that the two critical values $\kappa_1$ and $\kappa_2$ identified for $1<m<2$ (Section \ref{sect:bifurcations-mg1}), are in fact the same here. 
%This can also be noted by inspecting the function $H$ from \eqref{eqn:H}, which is constant when $m=2$. 
We also point out that in the limit $\kappa \searrow \kappa_1$, $\rho_\kappa(\cdot)$ from \eqref{eqn:rhok-cs-m2} does not approach the uniform distribution, but it approaches instead $\rho_{\kappa_1}(\cdot;\frac{1}{\dm+1})$ from \eqref{eqn:rhok-fs-m2} -- see also Figure \ref{fig:m2-srho}.

%\begin{center}
%\includegraphics[width=8cm]{diagram.jpg}    
%\end{center}

\end{remark}

We present some numerical simulations in Figure \ref{fig:m2-srho}. Figure \ref{fig:m2-srho}(a) illustrates the degenerate bifurcation that occurs at $\kappa_1 \approx 0.4775$ (in the numerics we used $\dm=2$). The vertical segment at $\kappa=\kappa_1$, with $1\leq s \leq 1/3$, corresponds to the family of equilibria \eqref{eqn:rhok-fs-m2}. At $\kappa=\kappa_1$ and $s = 1/(\dm+1)$, there is a transition (indicated by a black diamond) to equilibria of the form \eqref{eqn:rhok-cs-m2}. Several equilibria in the form \eqref{eqn:rhok-fs-m2}-\eqref{eqn:rhok-cs-m2} are shown in Figure \ref{fig:m2-srho}(b).

\begin{figure}[htbp]
 \begin{center}
 \begin{tabular}{cc}
 \includegraphics[width=0.48\textwidth]{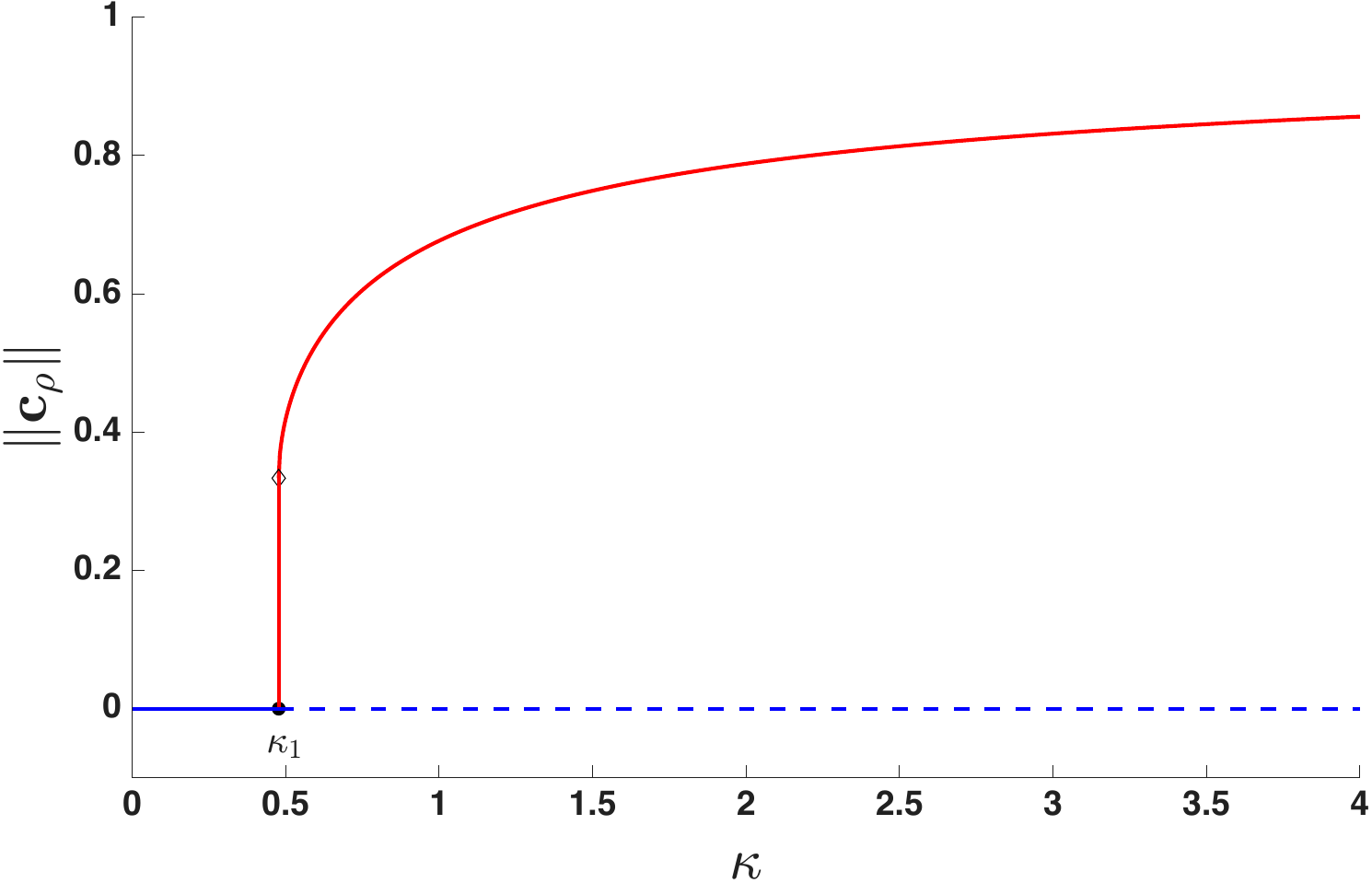} & 
 \includegraphics[width=0.48\textwidth]{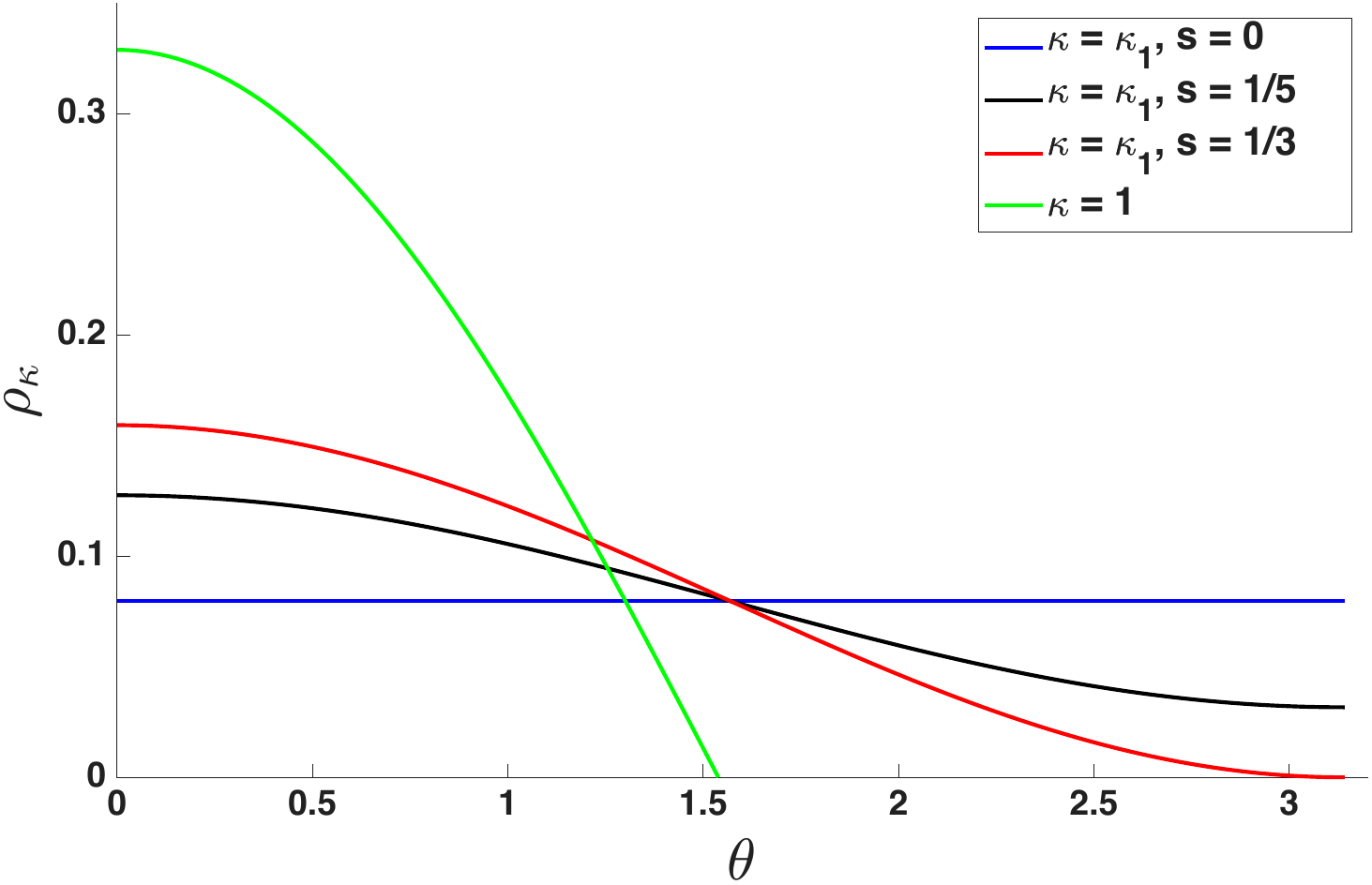} \\
 (a) & (b)
\end{tabular}
\caption{Case $m=2$. (a) Plot of the norm of the centre of mass of $\rhou$ (blue) and $\rho_\kappa$ (red) -- see Proposition \ref{prop:bif-m2}. At $\kappa=\kappa_1$, there exists a family of fully supported equilibria in the form \eqref{eqn:rhok-fs-m2}, parameterized by 
$s \in [ 0, \frac{1}{\dm+1}]$. At every $\kappa>\kappa_1$, there is a unique equilibrium in the form \eqref{eqn:rhok-cs-m2}, which has support strictly contained in $\bbs^\dm$. The transition at $\kappa=\kappa_1$, $s = 1/(\dm+1)$ is indicated by a black diamond. (b) Plot of equilibria \eqref{eqn:rhok-fs-m2} and \eqref{eqn:rhok-cs-m2} for various values of $\kappa$ and $s$. For $\kappa=\kappa_1$, we show three equilibria of the form \eqref{eqn:rhok-fs-m2}, where $s=0$ corresponds to the uniform distribution and $s=1/3$ is the transition value shown by black diamond. The numerical simulations correspond to $\dm=2$.}
\label{fig:m2-srho}
\end{center}
\end{figure}

%%%%%

\subsection{Energy minimizers}
\label{subsect:gmin-m2}
The global energy minimizers for the case $m=2$ are presented in the following theorem.

\begin{theorem}[Global energy minimizers for $m=2$]
\label{thm:gminimizers-m2} 

Let $m=2$ and $\dm\geq 1$. The global minimizer of \eqref{energy-sphere} on $\calP_{ac}(\bbs^\dm)$ is 
\begin{enumerate}
\item the uniform distribution $\rhou$ when $\kappa<\kappa_1$, \\[-7pt]
\item any equilibrium of the form \eqref{eqn:rhok-fs-m2} with $s \in \left[0, \frac{1}{\dm+1} \right]$, when $\kappa= \kappa_1$, \\[-7pt]
\item the equilibrium $\rho_{\kappa}$ given by \eqref{eqn:rhok-cs-m2} when $\kappa>\kappa_1$.
\end{enumerate}
Moreover, the energy difference $E[\rhou] - E[\rho_{\kappa}]$ increases with $\kappa$ for $\kappa>\kappa_1$. 
\end{theorem}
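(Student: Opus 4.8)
The plan is to reduce the problem to a comparison of the energies of the equilibria classified in Proposition \ref{prop:bif-m2}, and then carry out that comparison in each $\kappa$-regime. First I would record that a global minimizer of $E$ over $\calP_{ac}(\bbs^\dm)$ exists by the direct method: for $m=2$ we have $E[\rho]=\int_{\bbs^\dm}\rho^2\dx-\frac{\kappa}{2}\|c_\rho\|^2+\frac{\kappa}{2}\ge\int_{\bbs^\dm}\rho^2\dx$ since $\|c_\rho\|\le1$, so a minimizing sequence is bounded in $L^2(\bbs^\dm)$; a weak limit along a subsequence is admissible (as $\bbs^\dm$ is compact, $1$ and the coordinate functions lie in $L^2$, so the mass constraint passes to the limit and $c_{\rho_n}\to c_{\rho}$) and does not increase the energy, by weak lower semicontinuity of $\|\cdot\|_{L^2}^2$. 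Any global minimizer is radially symmetric by the convexity argument of Section \ref{sect:cp} and solves \eqref{eqn:equil} on its support; hence it is $\rhou$, a member of the family \eqref{eqn:rhok-fs-m2}, or the strictly supported equilibrium \eqref{eqn:rhok-cs-m2}, and it suffices to compare energies among these.

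For $\kappa<\kappa_1$ the only equilibrium is $\rhou$, which gives item (1). For $\kappa=\kappa_1$ the candidates are exactly the densities $\rho_{\kappa_1}(\cdot;s)$ with $s\in[0,\tfrac1{\dm+1}]$, since \eqref{eqn:kappa-phi-m2} forces $\phi=\pi$, i.e. the strictly supported branch degenerates into $\rho_{\kappa_1}(\cdot;\tfrac1{\dm+1})$. Using $\int_{\bbs^\dm}\cos\theta_x\dx=0$, $\int_{\bbs^\dm}\cos^2\theta_x\dx=\tfrac{|\bbs^\dm|}{\dm+1}$, $\lambda=\tfrac2{|\bbs^\dm|}$ and $\kappa_1=\tfrac{2(\dm+1)}{|\bbs^\dm|}$, one checks that in $E[\rho_{\kappa_1}(\cdot;s)]$ the $s$-dependent contributions coming from $\int\rho^2$ and from $-\tfrac{\kappa_1}{2}\|c_\rho\|^2$ cancel exactly, so $E[\rho_{\kappa_1}(\cdot;s)]$ does not depend on $s$; as $\rhou=\rho_{\kappa_1}(\cdot;0)$ belongs to this family, every member is a global minimizer, which is item (2).

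For $\kappa>\kappa_1$ the only equilibria are $\rhou$ and $\rho_\kappa$ from \eqref{eqn:rhok-cs-m2}, so it remains to prove $E[\rho_\kappa]<E[\rhou]$. Specializing \eqref{eqn:E-diff}--\eqref{eqn:energy-phi} to $m=2$, and using that $\tfrac1{m-1}\int_{\bbs^\dm}\rhou^m\dx=\tfrac1{|\bbs^\dm|}$ is independent of $\kappa$ (which is special to $m=2$), I would write $E[\rhou]-E[\rho_\kappa]=\tfrac1{|\bbs^\dm|}+\bar{g}_1(\phi_\kappa)\bar{g}_2(\phi_\kappa)$. By Remark \ref{rmk:calc-extend}(b), $(\bar{g}_1\bar{g}_2)'$ and $F'$ have opposite signs for all $m>1$; for $m=2$ one has $F(\phi)=\tfrac{w(\dm)}{2}\int_0^\phi\sin^{\dm+1}\theta\,\d\theta$ (see \eqref{eqn:kappa-phi-m2}), which is strictly increasing, so $\bar{g}_1\bar{g}_2$ is strictly decreasing in $\phi$. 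Since $\phi_\kappa$ decreases from $\pi$ to $0$ as $\kappa$ runs over $(\kappa_1,\infty)$, the difference $E[\rhou]-E[\rho_\kappa]$ is strictly increasing in $\kappa$, which establishes the monotonicity claimed in the statement. To fix its sign, I would let $\kappa\searrow\kappa_1$: then $\phi_\kappa\to\pi$ and $\rho_\kappa\to\rho_{\kappa_1}(\cdot;\tfrac1{\dm+1})$ by Remark \ref{rmk:bif-m2}, whose energy equals $E[\rhou]$ by the previous paragraph, so $E[\rhou]-E[\rho_\kappa]\to0$; strict monotonicity then forces $E[\rhou]-E[\rho_\kappa]>0$ for every $\kappa>\kappa_1$, which is item (3).

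The only genuinely delicate point is the continuity step as $\phi_\kappa\to\pi$, where one must identify the limit of the strictly supported branch with the endpoint $\rho_{\kappa_1}(\cdot;\tfrac1{\dm+1})$ of the fully supported family in order to pin down the base value of the energy difference and read off its sign. Everything else is routine: for $m=2$ all the integrands involved are polynomials in $\cos\theta$, so the explicit computations above are elementary, and the nonnegativity $\bar{h}(\phi)\ge0$ underlying Remark \ref{rmk:calc-extend}(b) was already established for all $m>1$ in the proof of Theorem \ref{thm:gminimizers}.
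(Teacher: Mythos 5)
Your proposal is correct and follows essentially the same route as the paper: the explicit computation showing $E[\rho_{\kappa_1}(\cdot;s)]$ is independent of $s$ at $\kappa=\kappa_1$, and for $\kappa>\kappa_1$ the reduction to the monotonicity of $F(\phi)=\tfrac{w(\dm)}{2}\int_0^\phi\sin^{\dm+1}\theta\,\d\theta$ via Remark \ref{rmk:calc-extend}. Your two additions -- the direct-method existence argument and the explicit anchoring of the sign of $E[\rhou]-E[\rho_\kappa]$ by identifying the limit of the strictly supported branch as $\kappa\searrow\kappa_1$ with $\rho_{\kappa_1}(\cdot;\tfrac{1}{\dm+1})$ (rather than with $\rhou$, as in the $1<m<2$ case) -- are welcome refinements that the paper leaves implicit, but they do not change the approach.
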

\begin{proof} 
For $\kappa<\kappa_1$, $\rhou$ is the only equilibrium, and hence the global minimizer. We will next discuss the cases $\kappa = \kappa_1$ and $\kappa > \kappa_1$.
\smallskip

{\em Case $\kappa = \kappa_1$.} Consider a generic equilibrium in the form \eqref{eqn:rhok-fs-m2}, corresponding to $\kappa=\kappa_1$ and some $ \displaystyle 0\leq s\leq \frac{1}{\dm+1}$ (see the vertical segment in Figure \ref{fig:m2-srho}(a)), and compute its energy.  

The entropy can be computed as follows:
\begin{align*}
\int_{\bbs^\dm}\rho_{\kappa_1}(x;s)^2\d S(x)&=\frac{\dm w_\dm}{4}\int_0^\pi (\lambda+\kappa_1 s \cos\theta)^2\sin^{\dm-1}\theta\d\theta\\[3pt]
&=\frac{\dm w_\dm}{4}\int_0^\pi \left(\lambda^2\sin^{\dm-1}\theta+\kappa_1^2 s^2\cos^2\theta\sin^{\dm-1}\theta \right)\d\theta\\[3pt]
&=\frac{\dm w_\dm}{4} \left( \frac{\kappa_1^2}{(\dm+1)^2} + \kappa_1^2 s^2 \frac{1}{\dm+1} \right) \int_0^\pi \sin^{\dm-1}\theta \d\theta,
\end{align*}
where for the last equality we used \eqref{eqn:cossin-rel} and that $\lambda= \frac{\kappa_1}{\dm+1}$ (see \eqref{eqn:lambda-m2} and \eqref{eqn:kappa1-m2}).
Also, using the expression of $\kappa_1$ from \eqref{eqn:kappa1-m2}, and the fact that $|\bbs^\dm| = \dm w_\dm \int_0^\pi \sin^{\dm-1}\theta \d\theta$, we can write the above as
\[
\int_{\bbs^\dm}\rho_{\kappa_1}(x;s)^2\d S(x) = \frac{\kappa_1}{2(\dm+1)} + \frac{\kappa_1 s^2}{2}.
\]

%&=\frac{\dm w_\dm}{4}\left(\left(\frac{\dm+1}{\dm}\right)(\lambda^2+\kappa_c^2s^2)-\kappa_c^2s^2\right)\int_0^\pi \sin^{\dm+1}\theta\d\theta\\
%&=\frac{w_\dm}{4}((\dm+1)\lambda^2+\kappa_c^2s^2)\int_0^\pi \sin^{\dm+1}\theta\d\theta\\
%&=\frac{w_\dm}{4}\left(\frac{\kappa_c^2}{\dm+1}+\kappa_c^2s^2\right)\int_0^\pi \sin^{\dm+1}\theta\d\theta\\
%&=\frac{1}{2}\left(\frac{\kappa_c}{\dm+1}+\kappa_cs^2\right)\\

The interaction energy (see \eqref{eqn:energy-s}) is given by
\[
-\frac{\kappa_1 s^2}{2}+\frac{\kappa_1}{2}.
\]
By combining the two components, we find the total energy
\[
E[\rho_{\kappa_1}(\cdot,s)]=\frac{\kappa_1}{2(\dm+1)}+\frac{\kappa_1}{2}=\frac{\kappa_1(\dm+2)}{2(\dm+1)}.
\]
Note that the energy does not depend on $s$. It means that when $\kappa=\kappa_1$, each equilibrium in the form \eqref{eqn:rhok-fs-m2} (with $0\leq s\leq \frac{1}{\dm+1}$) is a global energy minimizer. 
\smallskip

{\em Case $\kappa > \kappa_1$.} One can show that $E[\rhou]>E[\rho_\kappa]$ in the same way as this was shown for Theorem \eqref{thm:gminimizers} (see Remark \ref{rmk:calc-extend}) in the case $\kappa>\kappa_2$; recall that $m=2$ is a degenerate case where $\kappa_1 = \kappa_2$. Indeed, following the same calculations as in the proof of Theorem \eqref{thm:gminimizers} (case $\kappa>\kappa_2$) using $m=2$  -- see \eqref{eqn:energy-phi} and the calculations that follow, the desired conclusion results from the monotonicity of the function $F(\phi)$. 

Though not covered by Lemma \ref{lem:F-monotone}, the monotonicity of the function $F(\phi)$ for $m=2$ is immediate. Indeed, for $m=2$ \eqref{eqn:F} reduces to
\begin{equation}
\label{eqn:Fm2}
F(\phi)= \frac{1}{2} \dm w_\dm \int_0^\phi (\cos\theta-\cos\phi) \sin^{\dm-1}\theta \cos\theta\d\theta, 
\end{equation}
which further simplifies to (see calculation leading to \eqref{eqn:kappa-phi-m2}):
\begin{equation}
F(\phi) = \frac{w(\dm)}{2}\int_0^\phi\sin^{\dm+1}\theta\d\theta.
\end{equation}

To conclude, $F(\phi)$ is strictly increasing with $\phi$, and as in the proof of Theorem \eqref{thm:gminimizers} we infer that the energy difference $E[\rhou] - E[\rho_{\kappa}]$ decreases with $\phi_\kappa$. As $\kappa$ increases from $\kappa_1$ to $\infty$, $\phi_\kappa$ decreases from $\pi$ to $0$. Hence, $\phi_\kappa$ decreases with $\kappa$, and we conclude that $E[\rhou] - E[\rho_{\kappa}]$ increases with $\kappa$, for $\kappa >\kappa_1$.
\end{proof}

%%%%%%%%%%%%%%%

\section{Case $m>2$ and $\dm=2$}
\label{sect:bifurcations-mg2}

In this section we consider general exponent $m>2$, but restrict to the case $\dm=2$ only. From the point of view of applications to rod-like polymer orientation \cite{fatkullin2005critical}, we argue that this is the most relevant case to consider. The limitation to $\dm =2$ is due to the difficulty in assessing the monotonicity of the function $F(\phi)$ (see \eqref{eqn:F}) for general dimension $\dm$. As shown below, $F(\phi)$ is no longer monotonic for $\dm=2$, and numerical checks indicate that it is no longer monotonic for general $\dm$ as well.

\subsection{Phase transitions and existence of equilibria}
\label{subsect:mg2:existence}

Start with equilibria of full support. Following Section \ref{subsect:fs} (see \eqref{eqn:skappa-eta} and \eqref{eqn:H}), the problem reduces to solving $H(\eta) = \kappa^{-1}$. Note that when $m>2$, the function $H(\eta)$ is increasing (see Lemma \ref{lem:H-monotone} and Figure \ref{fig:HF-mg2}(a)). Hence, by using the same notations for $\kappa_1$ and $\kappa_2$ (see Lemma \ref{lem:Hlim} and equation \eqref{eqn:kappa2}), in this case we have $\kappa_2<\kappa_1$. For any $\kappa_2<\kappa<\kappa_1$, there exists a unique solution of $H(\eta) = \kappa^{-1}$ in $(-\infty,-1)$ (i.e., of \eqref{eqn:kappa-eta}) -- see Figure \ref{fig:HF-mg2}(a) for an illustration. In particular, there exists a unique equilibrium with full support in $\bbs^\dm$ in the form \eqref{eqn:equil-fs}; in Proposition \ref{prop:bif-mg2} below (see \eqref{eqn:rhok-fs-mg2}) this equilibrium is referred to as $\rho_{\kappa,2}$. Therefore, at $\kappa=\kappa_1$ we find again a bifurcation from the uniform density $\rhou$, but in this case a fully supported equilibrium in the form \eqref{eqn:equil-fs} emerges as $\kappa$ {\em decreases} through $\kappa_1$. This is one of the major differences from the case $1<m<2$.

At $\kappa=\kappa_2$, fully supported equilibria transition to equilibria of strict support. Finding the support of the latter equilibria reduces to solving $F(\phi) = \kappa^{-1}$ (see \eqref{eqn:skappa-phi} and \eqref{eqn:F}). And here arises the second fundamental difference from the case $1<m<2$: $F(\phi)$ is no longer monotone when $m>2$. In $\dm=2$ the monotonicity of $F$ can be established easily, by a direct calculation, as given in the following lemma.

\begin{lemma}
\label{lem:F-monotone-mg2}
Let $m>2$, $\dm=2$, and the function $F(\phi)$ for $0<\phi<\pi$ given by \eqref{eqn:F}. Then, there is $\bar{\phi} \in (0,\pi)$ such that $F$ is increasing on $(0,\bar{\phi})$ and decreasing on $(\bar{\phi}, \pi)$. 
\end{lemma}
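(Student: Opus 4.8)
The plan is to exploit the fact that for $\dm=2$ the weight $\sin^{\dm-1}\theta=\sin\theta$ makes every integral in \eqref{eqn:F} elementary, so that $F$ can be written in closed form and its derivative analyzed by hand. First I would substitute $u=\cos\theta$ and set $a=\cos\phi\in(-1,1)$; a one-line computation gives
\[
\int_0^\phi (\cos\theta-\cos\phi)^{\frac{1}{m-1}}\sin\theta\,\d\theta=\frac{m-1}{m}(1-a)^{\frac{m}{m-1}},
\]
and, after writing $u=(u-a)+a$ in the integrand,
\[
\int_0^\phi (\cos\theta-\cos\phi)^{\frac{1}{m-1}}\sin\theta\cos\theta\,\d\theta=\frac{m-1}{m(2m-1)}\bigl(m+(m-1)a\bigr)(1-a)^{\frac{m}{m-1}}.
\]
Plugging these into \eqref{eqn:F} and collecting the powers of $(1-a)$ — the exponents combine to exactly $m$ — all fractional powers cancel and one obtains
\[
F(\phi)=C\,(1-\cos\phi)^m\bigl(m+(m-1)\cos\phi\bigr),
\]
for a positive constant $C=C(m)$ depending only on $m$ (and $\dm=2$).

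Next I would differentiate in $\phi$. Using $\frac{\d}{\d\phi}\cos\phi=-\sin\phi$, the product and chain rules yield
\[
F'(\phi)=C\,\sin\phi\,(1-\cos\phi)^{m-1}\Bigl[(m^2-m+1)+(m^2-1)\cos\phi\Bigr].
\]
For $\phi\in(0,\pi)$ the prefactor $C\sin\phi(1-\cos\phi)^{m-1}$ is strictly positive, so the sign of $F'(\phi)$ equals the sign of $p(\phi):=(m^2-m+1)+(m^2-1)\cos\phi$. Since $m>1$, $p$ is strictly decreasing on $(0,\pi)$.

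Finally I would locate the sign change of $p$. At the endpoints, $p(0^+)=m(2m-1)>0$ and $p(\pi^-)=2-m$, and this is exactly where the hypothesis $m>2$ enters: $p(\pi^-)<0$ precisely when $m>2$. By monotonicity of $p$ and the intermediate value theorem there is then a unique $\bar\phi\in(0,\pi)$ with $p(\bar\phi)=0$, namely $\bar\phi=\arccos\bigl(-\tfrac{m^2-m+1}{m^2-1}\bigr)$, and $F'>0$ on $(0,\bar\phi)$ while $F'<0$ on $(\bar\phi,\pi)$, which is the claim. The only real subtlety is this endpoint sign check: for $m\le 2$ one has $p(\pi^-)=2-m\ge 0$, so $p$ — and hence $F'$ — does not change sign on $(0,\pi)$, consistent with the monotonicity of $F$ established for $1<m<2$ in Lemma \ref{lem:F-monotone} and with the degenerate case $m=2$. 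Thus $m>2$ is precisely the regime in which $F$ loses monotonicity, and the computation above pins down where. I do not expect any genuine obstacle beyond careful bookkeeping in the algebraic reduction of the first step.
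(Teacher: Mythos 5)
Your proposal is correct and follows essentially the same route as the paper: for $\dm=2$ the substitution $u=\cos\theta$ reduces $F$ to the closed form $C(1-\cos\phi)^m\bigl(m+(m-1)\cos\phi\bigr)$, and the sign of $F'$ is then governed by the linear-in-$\cos\phi$ factor $(m^2-m+1)+(m^2-1)\cos\phi$, which changes sign on $(0,\pi)$ exactly when $m>2$. Your $\bar\phi=\arccos\bigl(-\tfrac{m^2-m+1}{m^2-1}\bigr)$ coincides with the paper's $\pi-\arccos\bigl(\tfrac{m^2-m+1}{m^2-1}\bigr)$, and the algebra checks out.
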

\begin{proof}
 When $\dm =2$, the integrals in \eqref{eqn:F} can be computed explicitly by substitution. We find 
\[
F(\phi) = \frac{(m-1)^2}{m(2m-1)} \left( \frac{m-1}{m}\right)^{m-1} (2 \pi)^{m-1} (1-\cos \phi)^m \left(\frac{1}{m-1} +1 + \cos \phi \right).
\]
Denote the constant
\[
C := \frac{(m-1)^2}{m(2m-1)} \left( \frac{m-1}{m}\right)^{m-1} (2 \pi)^{m-1}.
\]
Then, the derivative of $F$ is computed as
\[
F'(\phi) = C (m+1) \sin \phi (1-\cos \phi)^{m-1} \left( \frac{m^2-m+1}{m^2-1} + \cos \phi\right).
\]

Hence, the sign of $F'$ depends on the behaviour of the function 
\[
g(m) = \frac{m^2-m+1}{m^2-1}.
\]
For $1<m<2$, we have $g(m)>1$, so $F'(\phi)>0$ (cf., Lemma \ref{lem:F-monotone}). For $m>2$, we have $g(m)<1$, which makes $F'$ change sign from positive to negative at 
\[
\bar{\phi}:= \pi -\operatorname{acos}(g(m)).
\]
We conclude that $F$ is increasing on $(0,\bar{\phi})$, decreasing on $(\bar{\phi}, \pi)$, and at $\phi=\bar{\phi}$ it has a global maximum 
-- see Figure \ref{fig:HF-mg2}(b).
\end{proof}

\begin{figure}[!htbp]
 \begin{center}
 \begin{tabular}{cc}
 \includegraphics[width=0.48\textwidth]{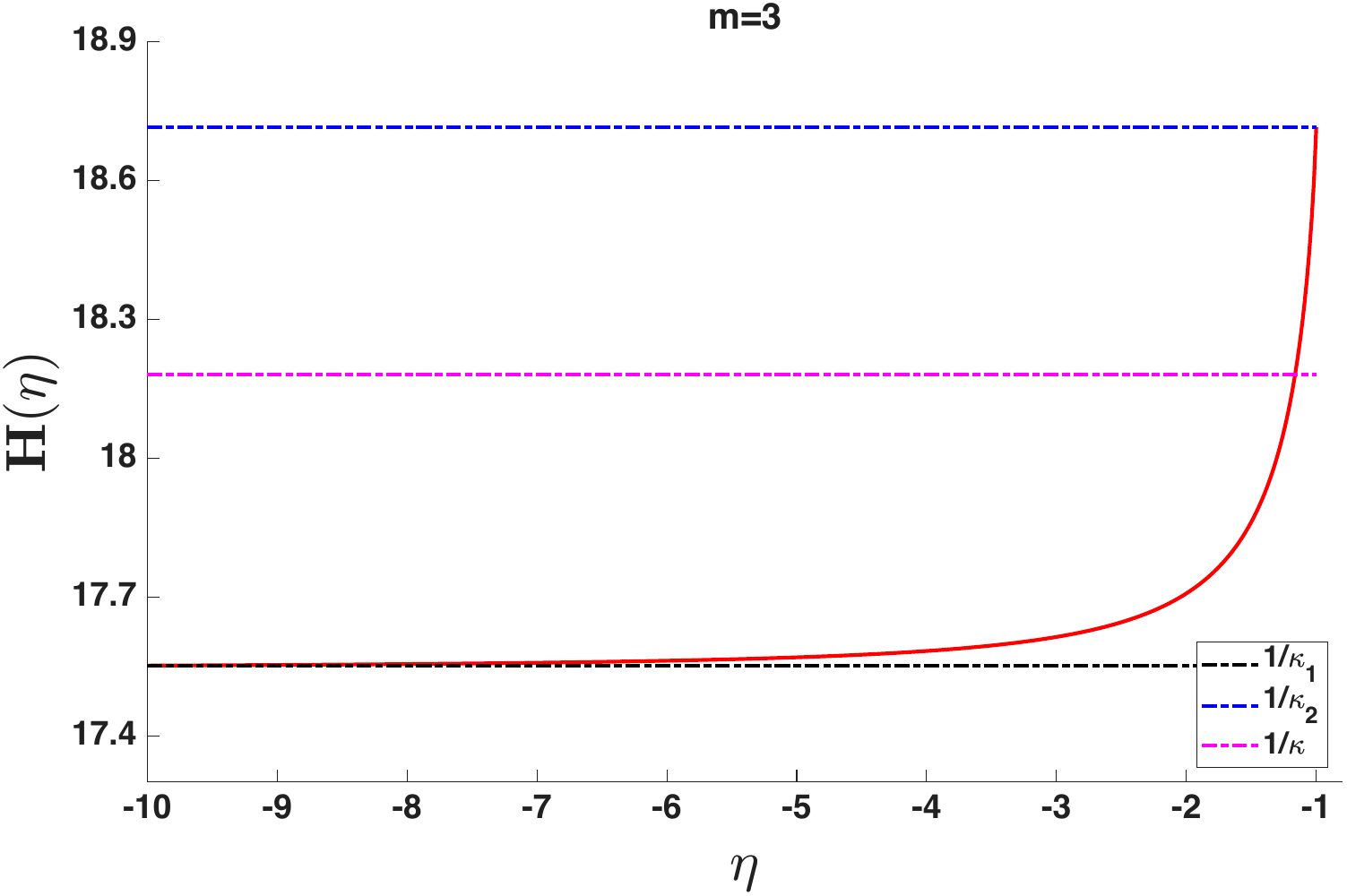} & 
 \includegraphics[width=0.48\textwidth]{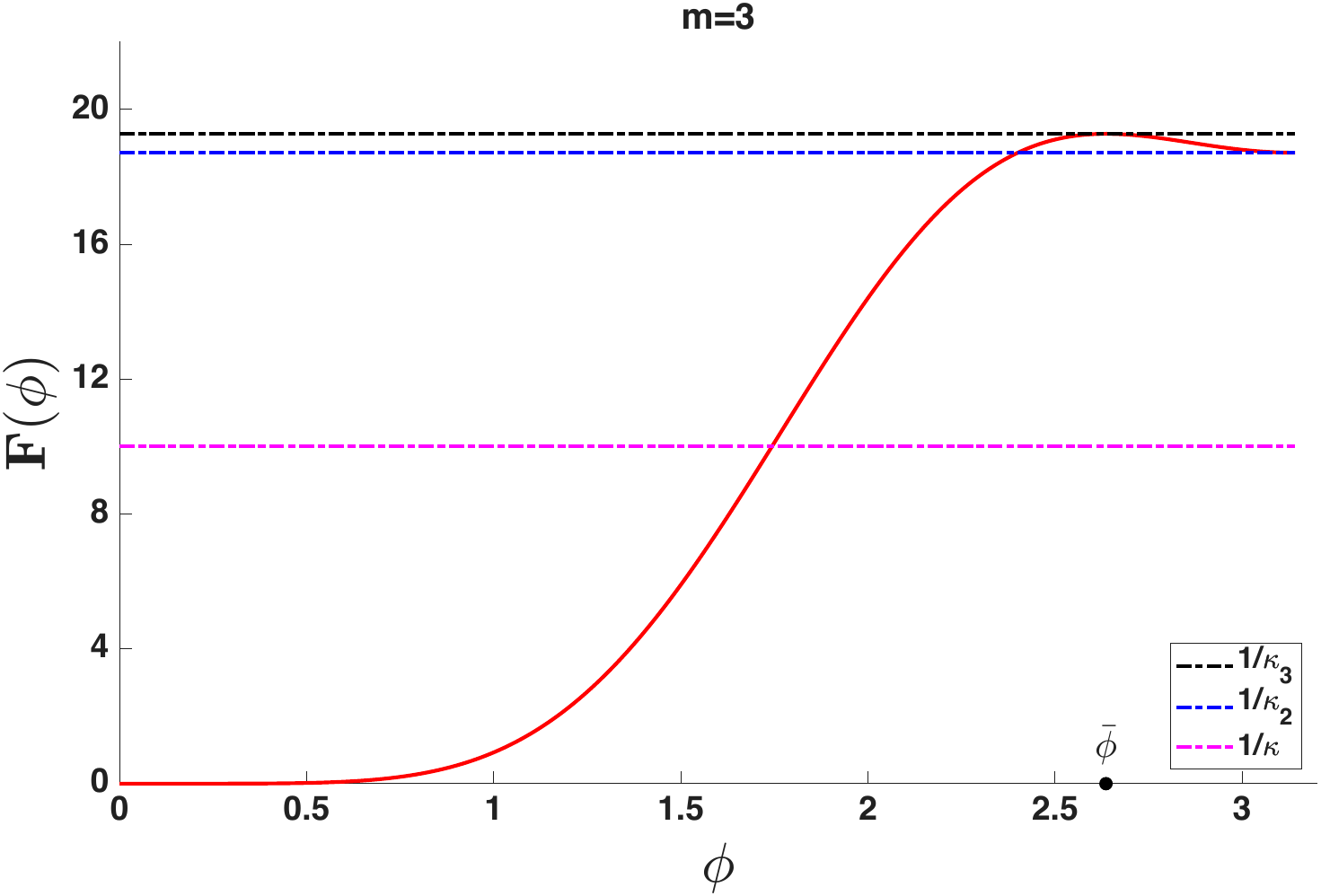} \\
 (a) & (b)
\end{tabular}
\caption{Case $\dm =2$, $m>2$. (a) Plot of function $H$ defined in \eqref{eqn:H}. For any $\kappa_2<\kappa<\kappa_1$, there exists a unique $\eta_\kappa <-1$ such that $\kappa^{-1} = H(\eta_\kappa)$ -- see equation \eqref{eqn:kappa-eta}. (b) Plot of function $F$ defined in \eqref{eqn:F}. The function changes monotonicity at $\bar{\phi}$, where it has a global maximum. For $\kappa_3<\kappa<\kappa_2$, there exist two solutions $\phi_{\kappa,1} \in (0,\bar{\phi})$ and $\phi_{\kappa,2}\in(\bar{\phi},\pi)$ of \eqref{eqn:kappa-phi}. At $\kappa=\kappa_3$, the two solutions $\phi_{\kappa,1}$ and $\phi_{\kappa,2}$ coincide. For $\kappa>\kappa_2$, there exists a unique solution $\phi_{\kappa,1} \in (0,\bar{\phi})$ of \eqref{eqn:kappa-phi}.  For both plots, $m=3$.}
\label{fig:HF-mg2}
\end{center}
\end{figure}

\begin{remark}
\label{rmk:kappa3}
Define a third critical value of $\kappa$ by 
\begin{equation}
\label{eqn:kappa3}
\kappa_3:= (F(\bar{\phi}))^{-1}.
\end{equation}
Then, at $\kappa=\kappa_3$, the equation $F(\phi) = \kappa^{-1}$ has one solution given by $\bar{\phi}$ -- see the black dash-dotted line in Figure \ref{fig:HF-mg2}(b). Also, as $\kappa_2 = 1/F(\pi) > \kappa_3$, for any $\kappa_3<\kappa<\kappa_2$, the equation $F(\phi) = \kappa^{-1}$ has two solutions: $\phi_{\kappa,1} \in (0, \bar{\phi})$ and $\phi_{\kappa,2} \in (\bar{\phi},\pi)$. Finally, for $\kappa >\kappa_2$, there exists only one solution $\phi_{\kappa,1} \in (0, \bar{\phi})$ of $F(\phi) = \kappa^{-1}$ -- see the magenta dash-dotted line in Figure \ref{fig:HF-mg2}(b). As $\kappa$ increases to $\infty$, $\phi_{\kappa,1}$ decreases to $0$.
\end{remark}

The considerations above can be put together in the following proposition. We also refer to Figures \ref{fig:m3-splot} and \ref{fig:m3-rho} for some numerical illustrations which support the theoretical findings. For numerical results we used $m=3$, $\dm =2$, for which $\kappa_1 \approx 0.0569$, $\kappa_2 \approx 0.0534$, $\kappa_3 \approx 0.0518$.

\begin{figure}[htbp]
 \begin{center}
 \begin{tabular}{cc}
 \includegraphics[width=0.48\textwidth]{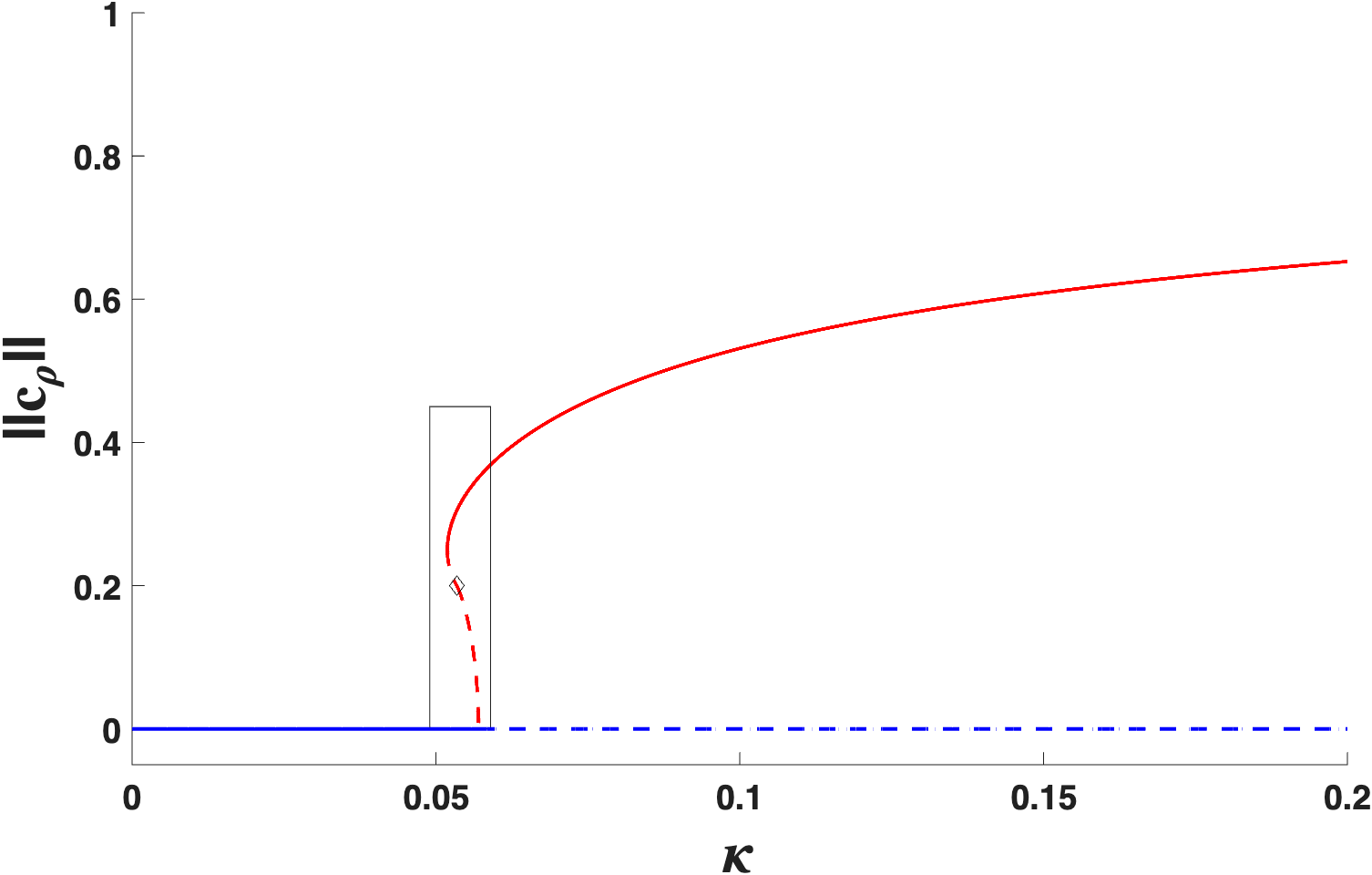} & 
 \includegraphics[width=0.48\textwidth]{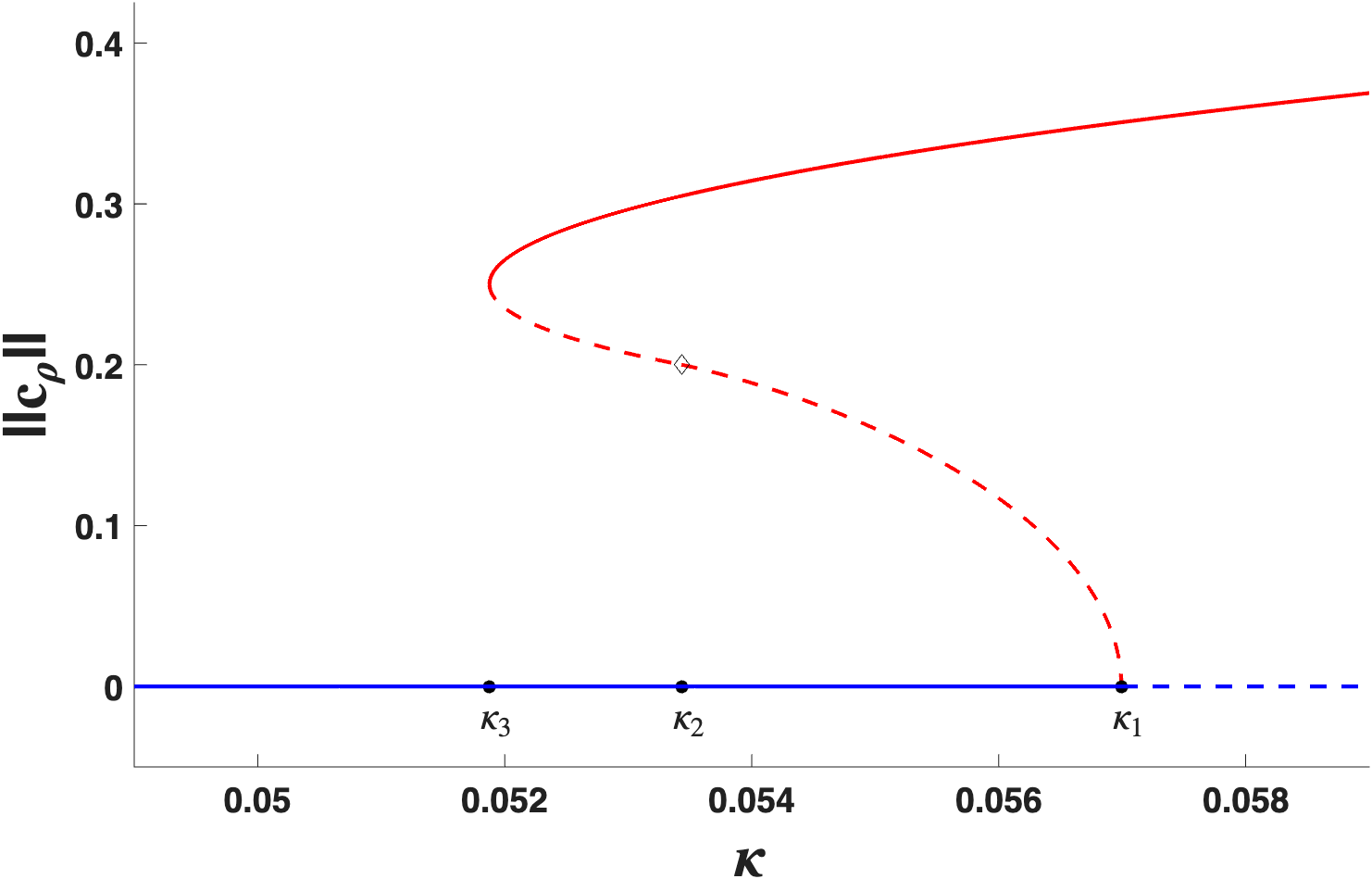} \\
 (a) $0<\kappa<0.2$ & (b) inset of rectangle from plot (a)
\end{tabular}
\caption{Case $\dm =2$, $m>2$. Plot of the norm of the centre of mass of  
$\rhou$ (blue), $\rho_{\kappa,1}$ (solid red) and $\rho_{\kappa,2}$ (dashed red) -- see Proposition \ref{prop:bif-mg2}. Plot(b) is the inset of the rectangle from plot (a). At $\kappa=\kappa_3$, a pair of equilibria with strict support in $\bbs^\dm$ gets born. The equilibrium $\rho_{\kappa,1}$ undergoes no further transitions and concentrates into a Dirac delta as $\kappa \to \infty$. On the other hand, $\rho_{\kappa,2}$ becomes fully supported at $\kappa = \kappa_2$ (transition indicated by a black diamond), and then merges with the uniform distribution at $\kappa=\kappa_1$. The numerical simulations correspond to $m=3$.}
\label{fig:m3-splot}
\end{center}
\end{figure}

\begin{figure}[htbp]
 \begin{center}
 \begin{tabular}{cc} 
 \includegraphics[width=0.48\textwidth]{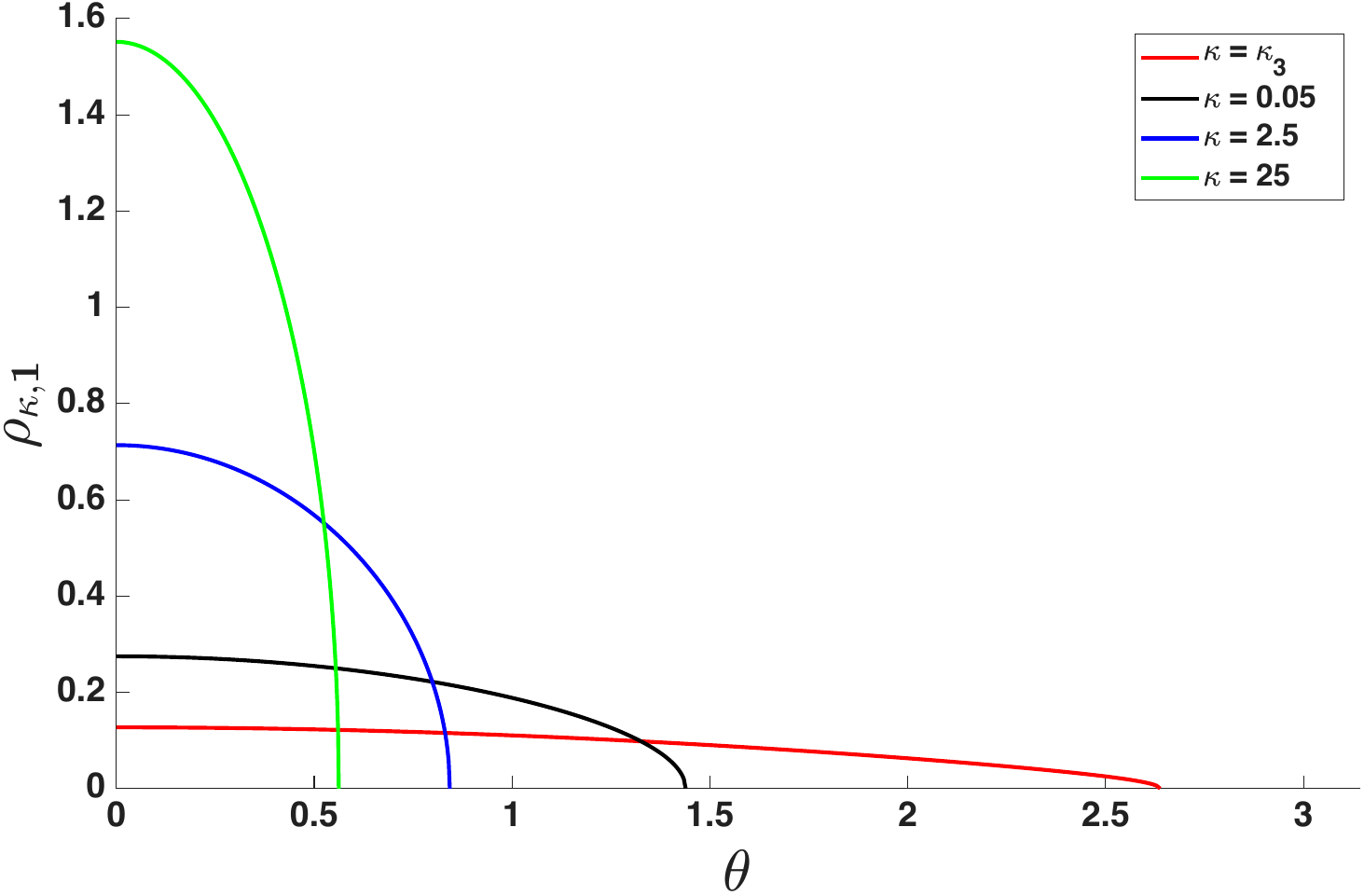} & 
 \includegraphics[width=0.48\textwidth]{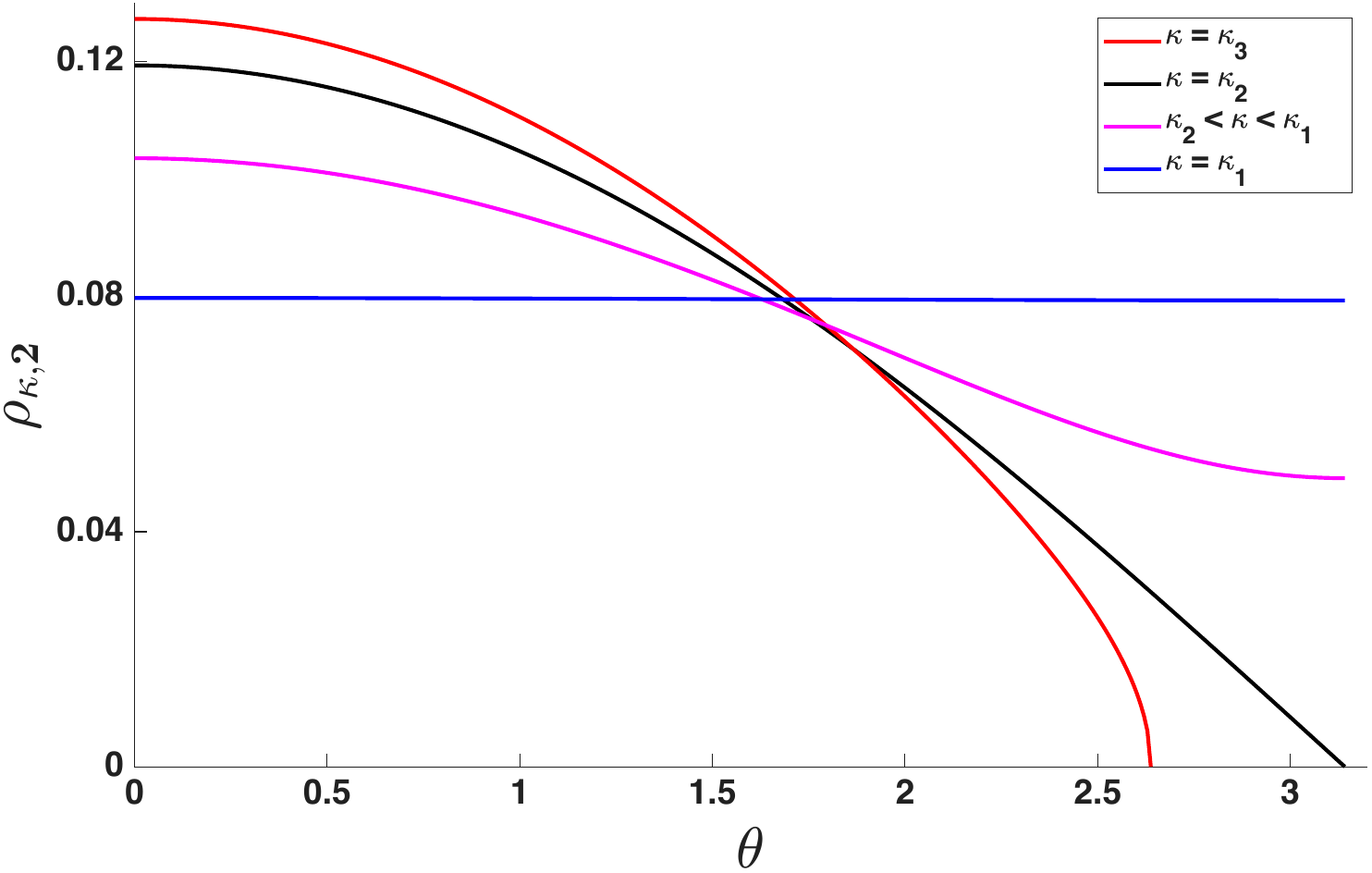} \\
 (a) Equilibrium $\rho_{\kappa, 1}$ & (b) Equilibrium $\rho_{\kappa, 2}$
\end{tabular}
\caption{Case $\dm =2$, $m>2$. Plot of equilibria $\rho_{\kappa,1}$ and $\rho_{\kappa,2}$ for various values of $\kappa$ (see \eqref{eqn:rhok-cs-mg2} and \eqref{eqn:rhok-fs-mg2}). Both equilibria form (and coincide) at $\kappa=\kappa_3$. The equilibrium $\rho_{\kappa,1}$ remains of strict support in $\bbs^\dm$ and concentrates at $\theta=0$ as $\kappa$ increases, while $\rho_{\kappa,2}$ transitions to a fully supported equilibrium at $\kappa=\kappa_2$, and at $\kappa=\kappa_1$ turns into the uniform distribution. We used $m=3$.}
\label{fig:m3-rho}
\end{center}
\end{figure}

\begin{proposition}[Critical $\kappa$ and equilibria for $m>2$, $\dm =2$]
\label{prop:bif-mg2}
For $\dm=2$ and any $m>2$, there exist three critical values of $\kappa$ ($\kappa_1>\kappa_2>\kappa_3$ defined by \eqref{eqn:kappa1}, \eqref{eqn:kappa2} and \eqref{eqn:kappa3}, respectively) such that:

\noindent i) At $\kappa=\kappa_3$, a pair of equilibria with support strictly contained in $\bbs^\dm$ gets born. For any $\kappa \in (\kappa_3,\kappa_2)$, there exist two solutions $0<\phi_{\kappa,1}<\phi_{\kappa,2}<\pi$ of \eqref{eqn:kappa-phi}; at $\kappa=\kappa_3$, the two solutions $\phi_{\kappa,1}$ and $\phi_{\kappa,2}$ coincide, with common value $\bar{\phi}$. Correspondingly, there exist two equilibria $\rho_{\kappa,1}$ and $\rho_{\kappa,2}$ of the form (see \eqref{eqn:equil-cs}):
\begin{equation}
\label{eqn:rhok-cs-mg2}
\rho_{\kappa,i}(x)=\begin{cases}
\left(\frac{m-1}{m}\right)^{\frac{1}{m-1}}\left(\lambda_{\kappa,i}+\kappa s_{\kappa,i}\cos\theta_x\right)^{\frac{1}{m-1}},\qquad&\text{ if }0\leq\theta_x\leq \phi_{\kappa,i},\\[5pt]
0,\qquad&\text{ otherwise},
\end{cases}
\end{equation}
with $s_{\kappa,i}$ and $\lambda_{\kappa,i}$ uniquely determined by $\phi_{\kappa,i}$ ($i=1,2$) -- see \eqref{eqn:lambdaphi} and \eqref{eqn:s-phi}.
\smallskip

\noindent ii) At $\kappa=\kappa_2$, the equilibrium $\rho_{\kappa,2}$ from part i) becomes fully supported on $\bbs^\dm$ (i.e., $\phi_{\kappa_2,2} = \pi$). For $\kappa_2<\kappa<\kappa_1$, $\rho_{\kappa,2}$ has the form (see  \eqref{eqn:equil-fs}):
\begin{equation}
\label{eqn:rhok-fs-mg2}
\rho_{\kappa,2}(x)=\left(\frac{m-1}{m}\right)^{\frac{1}{m-1}}\left(\lambda_{\kappa,2}+\kappa s_{\kappa,2} \cos\theta_x\right)^{\frac{1}{m-1}},\qquad\forall x\in \bbs^\dm,
\end{equation}
where $s_{\kappa,2}$ and $\lambda_{\kappa,2}$ are uniquely determined by $\kappa$ -- see \eqref{eqn:slambda} and \eqref{eqn:s-eta}. On the other hand, $\rho_{\kappa,1}$ undergoes no transition at $\kappa=\kappa_2$, and retains the form \eqref{eqn:rhok-cs-mg2}.
\smallskip

\noindent iii) At $\kappa=\kappa_1$, the fully supported equilibrium $\rho_{\kappa,2}$ from part ii) merges with the uniform distribution $\rhou$, and vanishes. And again,  $\rho_{\kappa,1}$ undergoes no transition at $\kappa=\kappa_1$; as $\kappa$ increases to infinity, $\phi_{\kappa,1}$ approaches $0$ and $\rho_{\kappa,1}$ concentrates into a Dirac delta.
\end{proposition}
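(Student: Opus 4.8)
The plan is to read off all three parts of the proposition from the monotonicity and limit results already established: Lemma~\ref{lem:H-monotone} (for $m>2$ the function $H$ is strictly increasing on $(-\infty,-1]$), Lemma~\ref{lem:Hlim} ($\lim_{\eta\to-\infty}H(\eta)=\kappa_1^{-1}$), and Lemma~\ref{lem:F-monotone-mg2} ($F$ is strictly increasing on $(0,\bar\phi)$, strictly decreasing on $(\bar\phi,\pi)$, with $F(\bar\phi)=\kappa_3^{-1}$ a strict global maximum), together with the boundary identities $H(-1)=F(\pi^-)=\kappa_2^{-1}$ and $F(0^+)=0$ from Sections~\ref{subsect:fs}--\ref{subsect:cs}. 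These immediately give the ordering $\kappa_3<\kappa_2<\kappa_1$ (from $F(\bar\phi)>F(\pi^-)$ and $H(-1)>\lim_{\eta\to-\infty}H$), so that $H$ maps $(-\infty,-1]$ increasingly and bijectively onto $(\kappa_1^{-1},\kappa_2^{-1}]$, while $F$ maps $(0,\bar\phi]$ increasingly onto $(0,\kappa_3^{-1}]$ and $[\bar\phi,\pi)$ decreasingly onto $(\kappa_2^{-1},\kappa_3^{-1}]$. For any solution $\eta_\kappa<-1$ of $H(\eta)=\kappa^{-1}$ one recovers a fully supported equilibrium via \eqref{eqn:s-eta} and \eqref{eqn:slambda} (nonnegativity of $\rho$ holds because $\eta_\kappa<-1$, and $s\ge0$ by the sign of the relevant numerator integral, established alongside \eqref{eqn:int-g0} in the proof of Theorem~\ref{thm:gminimizers}); for any solution $\phi_\kappa\in(0,\pi)$ of $F(\phi)=\kappa^{-1}$ one recovers a strictly supported equilibrium via \eqref{eqn:s-phi} and \eqref{eqn:lambdaphi}, with $\rho\ge0$ on $[0,\phi_\kappa]$ by construction and $s\ge0$ by \eqref{eqn:int-g0}.

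The existence and uniqueness statements then follow from a short case analysis in $\kappa$. For $\kappa\in(\kappa_2,\kappa_1)$ there is exactly one $\eta_\kappa\in(-\infty,-1)$ with $H(\eta_\kappa)=\kappa^{-1}$, producing the fully supported $\rho_{\kappa,2}$ of the form \eqref{eqn:rhok-fs-mg2}; for $\kappa\ge\kappa_1$ no such $\eta_\kappa$ exists, so $\rhou$ is the only equilibrium of full support. Splitting $F$ into its two monotone branches, $F(\phi)=\kappa^{-1}$ has: no solution for $\kappa<\kappa_3$; the single solution $\bar\phi$ at $\kappa=\kappa_3$; exactly two solutions $\phi_{\kappa,1}\in(0,\bar\phi)$ and $\phi_{\kappa,2}\in(\bar\phi,\pi)$ for $\kappa\in(\kappa_3,\kappa_2)$; and exactly one solution $\phi_{\kappa,1}\in(0,\bar\phi)$ for $\kappa\ge\kappa_2$ (the decreasing branch only attains values in $(\kappa_2^{-1},\kappa_3^{-1}]$, which excludes $\kappa^{-1}\le\kappa_2^{-1}$). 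Each $\phi_{\kappa,i}$ yields the equilibrium $\rho_{\kappa,i}$ of the form \eqref{eqn:rhok-cs-mg2}. Together with the classification of radially symmetric critical points from Section~\ref{sect:cp}, this shows $\rhou$, $\rho_{\kappa,1}$ and $\rho_{\kappa,2}$ are the only equilibria, in the ranges claimed.

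Finally, the transitions are read off from the endpoint behaviour of these monotone inverses. As $\kappa\searrow\kappa_3$ both $\phi_{\kappa,1}$ and $\phi_{\kappa,2}$ converge to $\bar\phi$, giving the birth of the pair in part~i). As $\kappa\nearrow\kappa_2$, the decreasing branch forces $\phi_{\kappa,2}\to\pi$, so $\rho_{\kappa,2}$ becomes fully supported at $\kappa_2$; as $\kappa\searrow\kappa_2$, monotonicity of $H$ gives $\eta_\kappa\to-1$, hence $\lambda_{\kappa,2}\to\kappa_2 s_{\kappa,2}$, and since $\phi=\pi$ and $\eta=-1$ describe the same boundary density the branch $\rho_{\kappa,2}$ passes continuously from \eqref{eqn:rhok-cs-mg2} to \eqref{eqn:rhok-fs-mg2} there, while $\phi_{\kappa,1}$ is unchanged --- this is part~ii). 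For part~iii), as $\kappa\nearrow\kappa_1$ we have $H(\eta_\kappa)=\kappa^{-1}\to\kappa_1^{-1}=\lim_{\eta\to-\infty}H(\eta)$, so $\eta_\kappa\to-\infty$, and by Remark~\ref{rmk:kappac} this drives $s_{\kappa,2}\to0$ and $\rho_{\kappa,2}\to\rhou$; for $\kappa>\kappa_1$ the branch has disappeared. Meanwhile $F(\phi_{\kappa,1})=\kappa^{-1}\to0=F(0^+)$ with $F$ strictly increasing near $0$ forces $\phi_{\kappa,1}\to0$ as $\kappa\to\infty$, and since $\rho_{\kappa,1}$ is a probability density supported on the geodesic ball of radius $\phi_{\kappa,1}$ about $x_0$, it converges weakly to $\delta_{x_0}$. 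I expect the main difficulty to be purely organizational --- keeping the labels $\rho_{\kappa,1},\rho_{\kappa,2}$ consistent across the two transitions and verifying the continuous matching of the strict- and full-support formulas at $\kappa=\kappa_2$ --- since all of the genuinely technical content is already contained in Lemmas~\ref{lem:H-monotone} and \ref{lem:F-monotone-mg2}.
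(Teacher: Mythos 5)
Your proposal is correct and follows essentially the same route as the paper: the paper's "proof" is precisely the discussion in Section~\ref{subsect:mg2:existence} together with Remark~\ref{rmk:kappa3}, which combines Lemma~\ref{lem:H-monotone} (monotonicity of $H$ for $m>2$), Lemma~\ref{lem:Hlim}, the identity $F(\pi)=H(-1)=\kappa_2^{-1}$, and the two-branch analysis of $F$ from Lemma~\ref{lem:F-monotone-mg2}, exactly as you do. Your added remarks on nonnegativity of $s$ via \eqref{eqn:int-g0} and on the continuous matching of the two formulas at $\kappa=\kappa_2$ are consistent with, if slightly more explicit than, the paper's presentation.
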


%%%%

\subsection{Energy minimizers}
\label{subsect:gmin-mg2}
It remains to discuss the energy minimizers. As noted in Remark \ref{rmk:calc-extend}, the calculations in the proof of Theorem \ref{thm:gminimizers} work for $m>2$ as well, with $\rho_\kappa$ replaced by either $\rho_{\kappa,1}$ or $\rho_{\kappa,2}$, depending on the context. In the subsequent discussion, we will be referring back to notations and calculations used in the proof of Theorem \ref{thm:gminimizers}.

For equilibria of full support $\rho_{\kappa,2}$ from \eqref{eqn:rhok-fs-mg2} ($\kappa_2<\kappa<\kappa_1$), the energy difference $E_[\rhou]-E[\rho_{\kappa,2}]$ depends on the bevaviour of the function $g_1(\eta) g_2(\eta)$. By Remark \ref{rmk:calc-extend}, since $H(\eta)$ is increasing, $g_1(\eta) g_2(\eta)$ is decreasing with $\eta$. Hence, as $\kappa$ increases from $\kappa_2$ to $\kappa_1$ (or equivalently, $\eta$ decreases from $-1$ to $-\infty$), $E[\rhou]-E[\rho_{\kappa,2}]$ increases from a negative value to zero.

For equlibria of strict support \eqref{eqn:rhok-cs-mg2}, the energy difference $E[\rhou]-E[\rho_{\kappa,i}]$ ($i=1,2$) depends on the bevaviour of the function $\bar{g}_1(\phi) \bar{g}_2(\phi)$. In turn, the latter depends on the monotonicity of $F(\phi)$, as given by Lemma \ref{lem:F-monotone-mg2}.  We have, for each equilibria: 

\noindent a) {\em $\rho_{\kappa,2}$ ($\phi$ increases from $\bar{\phi}$ to $\pi$).} Since $F(\phi)$ is decreasing, $\bar{g}_1(\phi) \bar{g}_2(\phi)$ increases on $(\bar{\phi},\pi)$ (cf., Remark \ref{rmk:calc-extend}). Hence, $E[\rhou]-E[\rho_{\kappa,2}]$ increases as $\phi$ increases from $\bar{\phi}$ to $\pi$ (or equivalently, $\kappa$ increases from $\kappa_3$ to $\kappa_2$). Combined with the previous observation (for the range $\kappa_2<\kappa<\kappa_1$), we find that as $\kappa$ increases from $\kappa_3$ to $\kappa_1$, $E[\rhou]-E[\rho_{\kappa,2}]$ increases from a negative value to zero. 
\smallskip

\noindent b) {\em $\rho_{\kappa,1}$ ($\phi$ decreases from $\bar{\phi}$ to $0$).}  Since $F(\phi)$ is increasing from $0$ to $\bar{\phi}$, $\bar{g}_1(\phi) \bar{g}_2(\phi)$ decreases on $(0,\bar{\phi})$ (cf., Remark \ref{rmk:calc-extend}). Hence, $E[\rhou]-E[\rho_{\kappa,1}]$ increases as $\phi$ decreases from $\bar{\phi}$ to $0$ (or equivalently, as $\kappa$ increases from $\kappa_3$ to $\infty$). Given that at $\kappa=\kappa_3$, $\rho_{\kappa,1}$ and $\rho_{\kappa,2}$ coincide, using part (a) we infer that $E[\rhou]-E[\rho_{\kappa,1}]<0$ at $\kappa=\kappa_3$. Therefore, $E[\rhou]-E[\rho_{\kappa,1}]$ increases from a negative value at $\kappa=\kappa_3$, but we have no more information yet on how the energies compare for a general $\kappa>\kappa_3$.

%{\color{red} Caution: at $\kappa=\kappa_3$, we already know (since $E[\rho_{\kappa,1}]=E[\rho_{\kappa,2}]$ there) that $E[\rhou]<E[\rho_{\kappa,1}]$. So $\rhou$ is more energetically favourable at $\kappa_3$. However, the difference $E[\rhou]-E[\rho_{\kappa,1}]$ increases as $\kappa$ increases from $\kappa_3$ to $\infty$. The numerical results suggest that there would be a (fourth!) critical value of $\kappa$ (call it $\kappa_c$) at which  $E[\rhou]=E[\rho_{\kappa,1}]$. For $\kappa>\kappa_c$, $\rho_{\kappa,1}$ is the global minimizer, while for $0<\kappa<\kappa_c$ the global min is $\rhou$. Perhaps we can show the existence of such $\kappa_c$.

%And we should also show: $E[\rho_{\kappa,1}] < E[\rho_{\kappa,2}]$, which is supported by intuition, and also numerical experiments.
%}

\begin{theorem}[Global energy minimizers for $m>2$, $d=2$]
\label{thm:gminimizers-mg2} 
Let $\dm=2$ and $m>2$. Then, there exists a fourth critical value of $\kappa$ (which we denote by $\kappa_c$), with $\kappa_3<\kappa_c<\kappa_1$, such that the global minimizer of \eqref{energy-sphere} on $\calP_{ac}(\bbs^\dm)$ is 
\begin{enumerate}
\item the uniform distribution $\rhou$ when $\kappa<\kappa_c$, \\[-7pt]
\item the equilibrium $\rho_{\kappa,1}$ given by \eqref{eqn:rhok-cs-mg2} when $\kappa > \kappa_c$.
\end{enumerate}
Moreover, the energy difference $E[\rhou] - E[\rho_{\kappa,1}]$ increases with $\kappa$ for $\kappa>\kappa_3$.
\end{theorem}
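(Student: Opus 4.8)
The plan is to reduce the problem to a one-parameter comparison and then locate the crossover $\kappa_c$ by a monotonicity argument, with special care at the endpoint $\kappa=\kappa_1$.

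First I would record that a global minimizer exists (the direct method applies: the entropy is lower semicontinuous and $\rho\mapsto\|c_\rho\|^2$ is continuous for the weak topology of probability measures, while $E$ is bounded below by $\kappa/2$), and that, being radially symmetric (Section \ref{sect:cp}) and a solution of the Euler--Lagrange equation \eqref{eqn:equil}, it must coincide with one of the equilibria classified in Proposition \ref{prop:bif-mg2}. For $\kappa<\kappa_3$ the only such equilibrium is $\rhou$, which settles that range. For $\kappa\ge\kappa_3$ the candidates are $\rhou$, $\rho_{\kappa,1}$, and, when $\kappa\le\kappa_1$, also $\rho_{\kappa,2}$; but the discussion preceding the theorem shows $E[\rhou]-E[\rho_{\kappa,2}]$ increases from a negative value at $\kappa=\kappa_3$ to $0$ at $\kappa=\kappa_1$, so $\rho_{\kappa,2}$ is never the (unique) global minimizer and may be discarded. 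Everything thus reduces to the sign of
\[
\Delta(\kappa):=E[\rhou]-E[\rho_{\kappa,1}],\qquad \kappa\ge\kappa_3.
\]

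Next I would pin down the behaviour of $\Delta$. The \emph{Moreover} assertion — $\Delta$ strictly increasing on $(\kappa_3,\infty)$ — is precisely item (b) of the discussion before the theorem: by Remark \ref{rmk:calc-extend}, $(\bar g_1\bar g_2)'$ and $F'$ have opposite signs, $F$ is increasing on $(0,\bar\phi)$ by Lemma \ref{lem:F-monotone-mg2}, and $\phi_{\kappa,1}$ decreases as $\kappa$ grows; together with the energy identity from the proof of Theorem \ref{thm:gminimizers} this yields strict monotonicity, continuity being clear from continuity of the branch $\kappa\mapsto\rho_{\kappa,1}$. At the left endpoint $\rho_{\kappa_3,1}=\rho_{\kappa_3,2}$, hence $\Delta(\kappa_3)=E[\rhou]-E[\rho_{\kappa_3,2}]<0$. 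For $\kappa>\kappa_1$, Proposition \ref{prop:stab-unif} gives that $\rhou$ is unstable, hence not the global minimizer, and since $\rho_{\kappa,1}$ is then the only other equilibrium it must be the global minimizer, so $\Delta(\kappa)>0$ for all $\kappa>\kappa_1$. By the intermediate value theorem and strict monotonicity, $\Delta$ has a unique zero $\kappa_c\in(\kappa_3,\kappa_1]$, with $\Delta<0$ on $(\kappa_3,\kappa_c)$ and $\Delta>0$ on $(\kappa_c,\infty)$. Feeding this back into the (at most) three-way comparison gives the dichotomy: $\rhou$ wins for $\kappa<\kappa_c$ (there $E[\rho_{\kappa,1}]\ge E[\rhou]$ and $E[\rho_{\kappa,2}]\ge E[\rhou]$) and $\rho_{\kappa,1}$ wins for $\kappa>\kappa_c$ (there $E[\rho_{\kappa,1}]<E[\rhou]\le E[\rho_{\kappa,2}]$).

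The remaining point — upgrading $\kappa_c\le\kappa_1$ to $\kappa_c<\kappa_1$, i.e.\ $\Delta(\kappa_1)>0$ — is what I expect to be the main obstacle, since at $\kappa=\kappa_1$ the energies of $\rhou$ and the bifurcating branch agree to second order in the bifurcation amplitude and the sign is decided by higher-order terms. The clean way is to exploit that the bifurcation at $\kappa_1$ is \emph{subcritical}: $\rho_{\kappa,2}$ exists on the side $\kappa<\kappa_1$ and carries energy strictly above $\rhou$. A Lyapunov--Schmidt reduction onto the one-dimensional kernel of the second variation at $\rhou$ (spanned by a degree-one spherical harmonic; the $\bbz_2$ symmetry $\rho(\cdot)\mapsto\rho(-\cdot)$ eliminates the cubic term) turns the energy into $\varphi(a)=c_2(\kappa)a^2+c_4(\kappa)a^4+o(a^4)$ with $c_2(\kappa_1)=0$ and $c_2$ changing sign from $+$ to $-$ at $\kappa_1$; the existence of the bifurcating equilibrium for $\kappa$ just below $\kappa_1$ forces $c_4<0$ there, hence $\varphi(a)<0$ for small $a\ne0$ at $\kappa=\kappa_1$, so $\rhou$ is not a local minimizer. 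Therefore $\rhou$ is not the global minimizer at $\kappa_1$; since the global minimizer is a radially symmetric critical point and the only one besides $\rhou$ at $\kappa=\kappa_1$ is $\rho_{\kappa_1,1}$, we get $E[\rho_{\kappa_1,1}]<E[\rhou]$, i.e.\ $\kappa_3<\kappa_c<\kappa_1$. For $\dm=2$ one could instead avoid the abstract reduction and verify $\Delta(\kappa_1)>0$ by a direct computation, as $F$, $s_\kappa$, $\lambda_\kappa$ and the integrals defining $\bar g_1,\bar g_2$ are all explicit here.
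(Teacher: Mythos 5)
Your overall architecture is sound and in its first two thirds essentially reproduces the paper's proof: you reduce to an energy comparison among the equilibria classified in Proposition \ref{prop:bif-mg2}, discard $\rho_{\kappa,2}$ because $E[\rhou]<E[\rho_{\kappa,2}]$ on $(\kappa_3,\kappa_1)$, and obtain $\kappa_c$ from the strict monotonicity of $\Delta(\kappa)=E[\rhou]-E[\rho_{\kappa,1}]$ together with $\Delta(\kappa_3)<0$ and eventual positivity. Two steps genuinely depart from the paper. First, to make $\Delta$ eventually positive you invoke the instability of $\rhou$ for $\kappa>\kappa_1$ (Proposition \ref{prop:stab-unif}) rather than the paper's explicit estimate showing $\bar{g}_1(\phi)\bar{g}_2(\phi)\to\infty$ as $\phi\to 0$; this is a legitimate and arguably cleaner shortcut, provided one has secured (as you do at the outset) that a global minimizer exists in $\calP_{ac}(\bbs^\dm)$ and must be one of the classified radially symmetric equilibria. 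Second, for the strict inequality $\kappa_c<\kappa_1$ the paper does not argue locally at the bifurcation point: it proves that $E[\rho_{\kappa,2}]-E[\rho_{\kappa,1}]$ is increasing on all of $(\kappa_3,\kappa_1)$ (via $\bar{h}'<0$ on $(0,\pi)$ and the monotonicity of $h$), so that this difference, which vanishes at $\kappa=\kappa_3$ where the two branches coincide, is strictly positive at $\kappa=\kappa_1$ where $\rho_{\kappa_1,2}=\rhou$; hence $E[\rhou]>E[\rho_{\kappa_1,1}]$ there.

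Your replacement of that last step by a Lyapunov--Schmidt argument is the one place with a genuine gap. Beyond the imprecision that the kernel of the second variation at $\rhou$ is $(\dm+1)$-dimensional (spanned by all degree-one spherical harmonics, so one must first quotient by the rotational symmetry to get a scalar reduced function), the existence of the subcritical branch $\rho_{\kappa,2}$ for $\kappa<\kappa_1$ only forces $c_4(\kappa)<0$ for $\kappa$ slightly below $\kappa_1$, hence $c_4(\kappa_1)\le 0$ by continuity; if $c_4(\kappa_1)=0$ the sign of $\varphi(a)$ at $\kappa=\kappa_1$ is decided by higher-order terms and your conclusion that $\rhou$ fails to be a local minimizer there does not follow. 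That this non-vanishing is not automatic is underlined by the fact that $c_4$ must change sign as $m$ crosses $2$: the branch is supercritical for $1<m<2$ and the reduction is degenerate at $m=2$ (where a whole continuum of equal-energy equilibria appears at $\kappa_1$). To close the argument you would need either the explicit $\dm=2$ computation you allude to but do not carry out, or the paper's global comparison of $E[\rho_{\kappa,1}]$ and $E[\rho_{\kappa,2}]$.
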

\begin{proof}
The considerations we made before listing the theorem imply that $\rhou$ is more energetically favourable than $\rho_{\kappa,2}$ for all $\kappa_3<\kappa<\kappa_1$. Also, $\rhou$ is more energetically favourable than $\rho_{\kappa,1}$ at $\kappa=\kappa_3$, and the difference $E[\rhou]-E[\rho_{\kappa,1}]$ increases for $\kappa>\kappa_3$ (but we have not yet quantified how). These behaviours can be observed in Figure \ref{fig:m3-energy}; note that for a better visualization we separated the energy plots in three parts, for different ranges of $\kappa$. To establish which equilibrium is the global minimizer we still have to compare $E[\rhou]$ and $E[\rho_{\kappa,1}]$, and $E[\rho_{\kappa,1}]$ and $E[\rho_{\kappa,2}]$. We will do this separately.
\smallskip

{\em Compare $E[\rhou]$ and $E[\rho_{\kappa,1}]$.} We use \eqref{eqn:E-diff} and \eqref{eqn:energy-phi} to write
\begin{equation}
    E[\rhou]-E[\rho_{\kappa,1}] = 
    \frac{1}{m-1}\int_{\bbs^\dm} \rhou^m(x)  \dS(x) + \bar{g}_1(\phi_{\kappa,1}) \bar{g}_2(\phi_{\kappa,1}).
\end{equation}

Also recall that $\phi_{\kappa,1}$ decreases to $0$ as $\kappa$ increases to $\infty$. For this reason, we will check the behaviour of $\bar{g}_1(\phi)\bar{g}_2(\phi)$ as $\phi$ tends to zero. 

Write $\bar{g}_1(\phi)$ from \eqref{eqn:g1g2bar} (with $\dm=2$) as
\begin{align}
    \bar{g}_1(\phi) &
    %= m\int_0^{\phi}(\cos\theta-\cos \phi)^{\frac{1}{m-1}}\sin \theta\cos\theta\d\theta -2 \int_0^{\phi}(\cos\theta-\cos \phi)^{\frac{m}{m-1}}\sin\theta\d\theta,\\
    = (m-2)\int_0^{\phi}(\cos\theta-\cos \phi)^{\frac{1}{m-1}}\sin\theta\cos\theta\d\theta \\ 
    &\quad + 2\cos\phi \int_0^{\phi}(\cos\theta-\cos \phi)^{\frac{1}{m-1}}\sin \theta\d\theta.
\end{align}
Then, using $m>2$ and \eqref{eqn:int-g0} we get
\begin{align}
    & \bar{g}_1(\phi)\bar{g}_2(\phi) \\
    &\quad = \frac{(m-2)\int_0^{\phi}(\cos\theta-\cos \phi)^{\frac{1}{m-1}}\sin \theta\cos\theta\d\theta + 2\cos\phi \int_0^{\phi}(\cos\theta-\cos \phi)^{\frac{1}{m-1}}\sin \theta\d\theta}{2(m-1)(d\omega_d)^{m-1}\left(\int_0^\phi(\cos\theta-\cos \phi)^{\frac{1}{m-1}}\sin \theta\d\theta\right)^m}\\[2pt]
    &\quad \geq \frac{2\cos\phi \int_0^{\phi}(\cos\theta-\cos \phi)^{\frac{1}{m-1}}\sin \theta\d\theta}{2(m-1)(d\omega_d)^{m-1}\left(\int_0^\phi(\cos\theta-\cos\phi)^{\frac{1}{m-1}}\sin \theta\d\theta\right)^m}\\[2pt]
    &\quad = C \, \frac{\cos\phi}{\left(\int_0^\phi(\cos\theta-\cos\phi)^{\frac{1}{m-1}}\sin \theta\d\theta\right)^{m-1}},
\end{align}
where $C$ is a positive constant. 

As the r.h.s. above tends to $\infty$ as $\phi \to 0$, we infer that 
\[
\lim_{\phi \to 0} \bar{g}_1(\phi)\bar{g}_2(\phi) = \infty.
\]
The limit $\phi \to 0$ corresponds to $\kappa \to \infty$. Hence, $E[\rhou]-E[\rho_{\kappa,1}]$ is negative at $\kappa=\kappa_3$, increases on $(\kappa_3,\infty)$, and approaches $\infty$ as $\kappa \to \infty$. We conclude that there is a fourth critical value, which we denote by $k_c$ such that $E[\rhou]=E[\rho_{\kappa,1}]$ at $\kappa=\kappa_c$. Also, $E[\rhou]<E[\rho_{\kappa,1}]$ for $\kappa_3<\kappa<\kappa_c$, and $E[\rhou]>E[\rho_{\kappa,1}]$ for $\kappa>\kappa_c$ -- see Figure \ref{fig:m3-energy}, in particular plot (b) and its inset.
\smallskip

{\em Compare $E[\rho_{\kappa,1}]$ and $E[\rho_{\kappa,2}]$.} We will consider two cases: a) $\kappa_3<\kappa<\kappa_2$, and b) $\kappa_2<\kappa<\kappa_1$.

{\em Case a) $\kappa_3<\kappa<\kappa_2$.} We use \eqref{eqn:energy-s} and \eqref{eqn:energy-phi}, to write
\begin{equation}
\label{eqn:E12-diff}
E[\rho_{\kappa,2}]-E[\rho_{\kappa,1}] = \bar{g}_1(\phi_{\kappa,1}) \bar{g}_2(\phi_{\kappa,1}) - \bar{g}_1(\phi_{\kappa,2}) \bar{g}_2(\phi_{\kappa,2}).
\end{equation}
We will show that $E[\rho_{\kappa,2}]-E[\rho_{\kappa,1}]$ increases as $\kappa$ increases from $\kappa_3$ to $\kappa_2$. 

Compute the derivative of $\bar{g}_1(\phi_{\kappa,i})\bar{g}_2(\phi_{\kappa,i})$ with respect to $\kappa$, for $i=1,2$. By chain rule,
\begin{equation}
\label{eqn:g1g2p-chain}
    \frac{\d} {\ d\kappa}(\bar{g}_1(\phi_{\kappa,i})\bar{g}_2(\phi_{\kappa,i})) = \frac{\d (\bar{g}_1(\phi_{\kappa,i})\bar{g}_2(\phi_{\kappa,i})) }{\d \phi_{\kappa,i}} \, \frac{\d \phi_{\kappa,i}} {\d\kappa}.
\end{equation}
Now differentiate
\[
    F(\phi_{\kappa,i})^{-1}=\kappa,
\]
 with respect to $\kappa$, to get
 \[
-\frac{F'(\phi_{\kappa,i})}{F^2(\phi_{\kappa,i})}\frac{\d\phi_{\kappa,i}}{\d\kappa} =1.
 \]
From the above we get
\begin{equation}
\label{eqn:dphi_ki}
  \frac{\d\phi_{\kappa,i}}{\d\kappa} = -\frac{F^2(\phi_{\kappa,i})}{F'(\phi_{\kappa,i})}.
\end{equation}

By \eqref{eqn:g1bg2bp}, we also have
\begin{equation}
\label{eqn:dg1g2_phiki}
\frac{\d (\bar{g}_1(\phi_{\kappa,i})\bar{g}_2(\phi_{\kappa,i})) }{\d \phi_{\kappa,i}} = -(m-1) \bar{h}(\phi_{\kappa,i}) \frac{F'(\phi_{\kappa,i})}{F(\phi_{\kappa,i})},
\end{equation}
with $\bar{h}$ given by \eqref{eqn:barh}. Then, by combining \eqref{eqn:g1g2p-chain}, \eqref{eqn:dphi_ki} and \eqref{eqn:dg1g2_phiki}, we find
\begin{equation}
\label{eqn:dg1bg2b_dk}
\begin{aligned}
     \frac{\d} {\ d\kappa}(\bar{g}_1(\phi_{\kappa,i})\bar{g}_2(\phi_{\kappa,i})) &= (m-1) \bar{h}(\phi_{\kappa,i})  F(\phi_{\kappa,i})\\
     &= (m-1) \bar{h}(\phi_{\kappa,i})\kappa^{-1}.
\end{aligned}
\end{equation}

The equation above holds for $i=1,2$. Now, from \eqref{eqn:E12-diff} and \eqref{eqn:dg1bg2b_dk} we get
\begin{equation}
\label{eqn:dEdiff_dk}
\frac{\d}{\d \kappa} \left( E[\rho_{\kappa,2}]-E[\rho_{\kappa,1}] \right) = \kappa^{-1} (m-1) \left( \bar{h}(\phi_{\kappa,1}) - \bar{h}(\phi_{\kappa,2}) \right).
\end{equation}
Note that at each $\kappa$ fixed, we have $\phi_{\kappa,1} < \phi_{\kappa,2}$. If we show that $\bar{h}(\phi)$ is decreasing in $\phi$, then together with \eqref{eqn:dEdiff_dk}, we infer that $E[\rho_{\kappa,2}]-E[\rho_{\kappa,1}]$ increases with $\kappa$.

%We know that $\bar{h}(\phi^1)=\frac{\frac{m}{m-1}\int_0^\phi(\cos\theta-\cos \phi)^{\frac{1}{m-1}}\sin^{d-1}\cos\theta\theta\d\theta}{2(m-1)(d\omega_d)^{m-1}\left(\int_0^\phi(\cos\theta-\cos \phi)^{\frac{1}{m-1}}\sin^{d-1}\theta\d\theta\right)^{m}}$ 

We will find that $\bar{h}'(\phi)<0$ by a direct calculation. By \eqref{eqn:barh} with $\dm=2$, write
\begin{equation}
\bar{h}(\phi) = \bar{C} \;  \frac{\int_0^\phi(\cos\theta-\cos \phi)^{\frac{1}{m-1}}\sin \theta\cos\theta\d\theta}{\left(\int_0^\phi(\cos\theta-\cos \phi)^{\frac{1}{m-1}}\sin \theta\d\theta\right)^{m}},
\end{equation}
where $\bar{C}>0$ is a constant. Both the numerator and the denominator above can be computed exactly. Indeed, by direct integration we find 
\[
\int_0^\phi(\cos\theta-\cos \phi)^{\frac{1}{m-1}}\sin \theta\cos\theta\d\theta = \frac{m-1}{2m-1} (1-\cos \phi)^{\frac{2m-1}{m-1}} + \frac{m-1}{m} \cos \phi (1-\cos \phi)^{\frac{m}{m-1}}, 
\]
and 
\[
\int_0^\phi(\cos\theta-\cos \phi)^{\frac{1}{m-1}}\sin \theta\d\theta = \frac{m-1}{m} (1-\cos \phi)^{\frac{m}{m-1}}, 
\]
and hence, by redenoting the constant $\bar{C}$ several times to include new constant terms, we have
\begin{align*}
\bar{h}(\phi) &= \bar{C} \;  \frac{\frac{1}{2m-1} (1-\cos \phi)^{\frac{2m-1}{m-1}} + \frac{1}{m} \cos \phi (1-\cos \phi)^{\frac{m}{m-1}}}{(1-\cos \phi)^{\frac{m^2}{m-1}}} \\
&=  \bar{C} \; \left( \frac{1}{2m-1} (1-\cos \phi)^{1-m} + \frac{1}{m} \cos \phi (1-\cos \phi)^{-m} \right)  \\[3pt]
&= \bar{C} (1-\cos \phi)^{-m} (m+(m-1) \cos \phi).
\end{align*}

Then, compute
\begin{align*}
{\bar{h}}' (\phi) &= -\bar{C} \sin \phi (1-\cos \phi)^{-m} \left( \frac{m(m+(m-1) \cos \phi)}{1-\cos \phi} + m-1\right) \\[3pt]
&= -\bar{C} \sin \phi (1-\cos \phi)^{-m-1} \left(m^2 + m-1 + (m-1)^2 \cos \phi \right)
\end{align*}
Finally, we have 
\begin{align*}
m^2 + m-1 + (m-1)^2 \cos \phi &\geq m^2 + m-1 - (m-1)^2 \\
& = 3m-2,
\end{align*}
and hence in the regime $m>2$, ${\bar{h}}' (\phi)<0$.
\smallskip

{\em Case b) $\kappa_2<\kappa<\kappa_1$.} 
In this range of $\kappa$, $\rho_{\kappa,2}$ is fully supported on $\bbs^\dm$. We use \eqref{eqn:energy-s}, \eqref{eqn:energy-eta} and \eqref{eqn:energy-phi} to write
\begin{equation}
\label{eqn:E12-diff-fs}
E[\rho_{\kappa,2}]-E[\rho_{\kappa,1}] = \bar{g}_1(\phi_{\kappa,1}) \bar{g}_2(\phi_{\kappa,1}) - {g}_1(\eta_\kappa) {g}_2(\eta_\kappa).
\end{equation}
We will show again that $E[\rho_{\kappa,2}]-E[\rho_{\kappa,1}]$ increases as $\kappa$ increases from $\kappa_2$ to $\kappa_1$.

From computations similar to those leading to \eqref{eqn:dphi_ki} and \eqref{eqn:dg1bg2b_dk}, we get
\[
\frac{\d\eta_\kappa}{\d\kappa}= -\frac{H^2(\eta_\kappa)}{H'(\eta_\kappa)},
\]
and by chain rule,
\[
\frac{\d }{\d\kappa} \left( g_1(\eta_\kappa)g_2(\eta_\kappa) \right) = (m-1)h(\eta_\kappa)\kappa^{-1},
\]
with $h$ given by \eqref{eqn:h}. Then, together with \eqref{eqn:dg1bg2b_dk}, we write \eqref{eqn:E12-diff-fs} as
\begin{equation}
\label{eqn:dEdiff_dk-fs}
    \frac{\d}{\d \kappa} \left( E[\rho_{\kappa,2}]-E[\rho_{\kappa,1}] \right) = \kappa^{-1} (m-1) \left( \bar{h}(\phi_{\kappa,1}) - h(\eta_\kappa) \right).
\end{equation}

By \eqref{eqn:h} with $\dm =2$, we have
\begin{equation}
h(\eta) = C \, \frac{\int_0^\pi(\cos\theta-\eta)^{\frac{1}{m-1}}\sin \theta\cos\theta\d\theta}{\left(\int_0^\pi(\cos\theta-\eta)^{\frac{1}{m-1}}\sin \theta\d\theta\right)^{m}},
\end{equation}
for $C>0$ constant. Inspect first the numerator

\[
f(\eta):=\int_0^\pi(\cos\theta-\eta)^{\frac{1}{m-1}}\sin \theta\cos\theta\d\theta.
\]
Compute the derivative:
\begin{align}
    f'(\eta) &= -\frac{1}{m-1}\int_0^\pi(\cos\theta-\eta)^{\frac{1}{m-1}-1}\sin \theta\cos\theta\d\theta.
    %&\geq0
\end{align}
We will show that $\int_0^\pi(\cos\theta-\eta)^{\frac{1}{m-1}-1}\sin \theta\cos\theta\d\theta\leq0$, and hence $f'(\eta) \geq 0$. Note first that as $m>2$, $ \frac{1}{m-1}-1 < 0$.
Then, for any $0 \leq  \tilde{\theta} \leq  \frac{\pi}{2}$, we have: $\left(\cos\left(\frac{\pi}{2}-\tilde{\theta}\right) -\eta\right)^{\frac{1}{m-1}-1} \leq  \left(\cos\left(\frac{\pi}{2}+\tilde{\theta}\right)-\eta\right)^{\frac{1}{m-1}-1}$ (by the monotonicity of cosine and $\frac{1}{m-1}-1 < 0$), $ \sin\left(\frac{\pi}{2}-\tilde{\theta}\right)=\sin\left(\frac{\pi}{2}+\tilde{\theta}\right)$ and $ 0\leq\cos\left(\frac{\pi}{2}-\tilde{\theta}\right)=-\cos\left(\frac{\pi}{2}+\tilde{\theta}\right)$. From these relations, we infer 
\[
\int_0^\frac{\pi}{2}(\cos\theta-\eta)^{\frac{1}{m-1}-1}\sin \theta\cos\theta\d\theta\leq-\int_\frac{\pi}{2}^\pi(\cos\theta-\eta)^{\frac{1}{m-1}-1}\sin \theta\cos\theta\d\theta,
\]
where in the integrals we change variables $\theta = \frac{\pi}{2} - \tilde{\theta}$ and $\theta = \frac{\pi}{2} +\tilde{\theta}$ respectively. Hence, we find
\begin{align}
    & \int_0^\pi(\cos\theta-\eta)^{\frac{1}{m-1}-1}\sin \theta\cos\theta\d\theta \\
    & \qquad = \int_0^\frac{\pi}{2}(\cos\theta-\eta)^{\frac{1}{m-1}-1}\sin \theta\cos\theta\d\theta + \int_\frac{\pi}{2}^\pi(\cos\theta-\eta)^{\frac{1}{m-1}-1}\sin \theta\cos\theta\d\theta\\
    % &\qquad \geq \int_0^\frac{\pi}{2}(\cos\theta-\eta)^{\frac{1}{m-1}}\sin \theta\cos\theta\d\theta - \int_0^\frac{\pi}{2}(\cos\theta-\eta)^{\frac{1}{m-1}}\sin \theta\cos\theta\d\theta \\[5pt]
    & \qquad \leq 0.
\end{align}

It is also immediate that $\bar{f}(\eta):=\frac{1}{\left(\int_0^\pi(\cos\theta-\eta)^{\frac{1}{m-1}}\sin \theta\d\theta\right)^{m}}$ is an increasing function of $\eta$, for $-\infty<\eta<-1$. Therefore, as $h(\eta)$ is a product of two increasing functions, it is also increasing. Recall that $\eta_\kappa$ decreases from $-1$ to $-\infty$ as $\kappa$ increases from $\kappa_2$ to $\kappa_1$. So overall, $h(\eta_\kappa)$ decreases as $\kappa$ increases from $\kappa_2$ to $\kappa_1$, and in particular we have
\[
h(-1) \geq h(\eta_\kappa), \qquad \text{ for all } \kappa_2<\kappa<\kappa_1.
\]

On the other hand, by the monotonicity of $\bar{h}$ (established as in case a)), we have 
\[
\bar{h}(\phi_{\kappa,1}) > \bar{h}(\phi_{{\kappa_2},1}), \qquad \text{ for all } \kappa_2<\kappa<\kappa_1,
\]
and case a), at $\kappa = \kappa_2$ it holds that
\[
\bar{h}(\phi_{{\kappa_2},1}) > \bar{h}(\phi_{{\kappa_2},2}).
\]
Finally, observe that $\phi_{\kappa_2,2}=\pi$ and
$\bar{h}(\phi_{\kappa_2,2})= h(-1)$, and by combining the inequalities above we find
\[
\bar{h}(\phi_{\kappa,1}) > h(\eta_\kappa), \qquad \text{ for all } \kappa_2<\kappa<\kappa_1.
\]
From this result and \eqref{eqn:dEdiff_dk-fs}, we infer that $ E[\rho_{\kappa,2}]-E[\rho_{\kappa,1}]$ increases with $\kappa$ on $(\kappa_2,\kappa_1)$.

The comparison of energies above provides the global minimum as in the statement of the theorem. Finally, using the fact that $\rhou$ and $\rho_{\kappa,2}$ coincide at $\kappa=\kappa_1$, we infer that $E[\rhou]>E[\rho_{\kappa,1}]$ at $\kappa=\kappa_1$. Hence, the fourth critical value $\kappa_c$ (where $E[\rhou]-E[\rho_{\kappa,1}]$ changes sign) is in the range $\kappa_3<\kappa_c<\kappa_1$. 
\end{proof}

The numerical illustration in Figure \ref{fig:m3-energy} shows separately three ranges of $\kappa$. Various facts that appear in the proof of Theorem \ref{thm:gminimizers-mg2} can be observed in the figure; e.g., the monotonicty with respect to $\kappa$ of $E[\rhou]-E[\rho_{\kappa,1}]$, $E[\rhou]-E[\rho_{\kappa,2}]$ and $E[\rho_{\kappa,2}]-E[\rho_{\kappa,1}]$. In Figure \ref{thm:gminimizers-mg2}(b) we indicate by a triangle the point where $E[\rhou]=E[\rho_{\kappa,1}]$, i.e., at the fourth critical value $\kappa_c$.

%For numerical results we used $m=3.5$, $\dm =2$, for which $\kappa_1 = 0.0187$, $\kappa_2 = 0.0174$, $\kappa_3 = 0.0166$.

\begin{figure}[htbp]
 \begin{center}
 \begin{tabular}{cc}
 \includegraphics[width=0.5\textwidth]{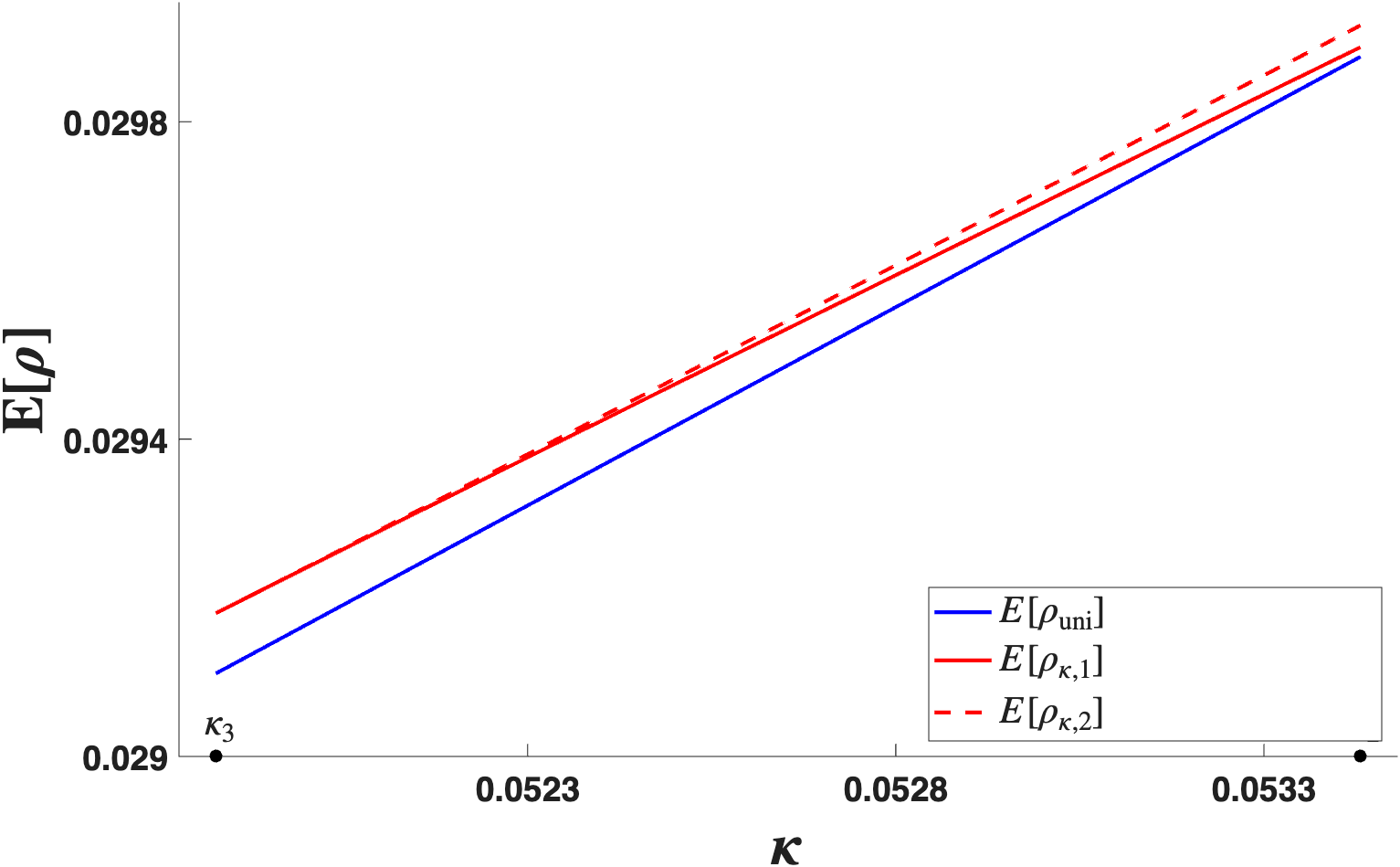} & \\
 (a) $\kappa_3<\kappa<\kappa_2$ & \\[10pt]
\includegraphics[width=0.5\textwidth]{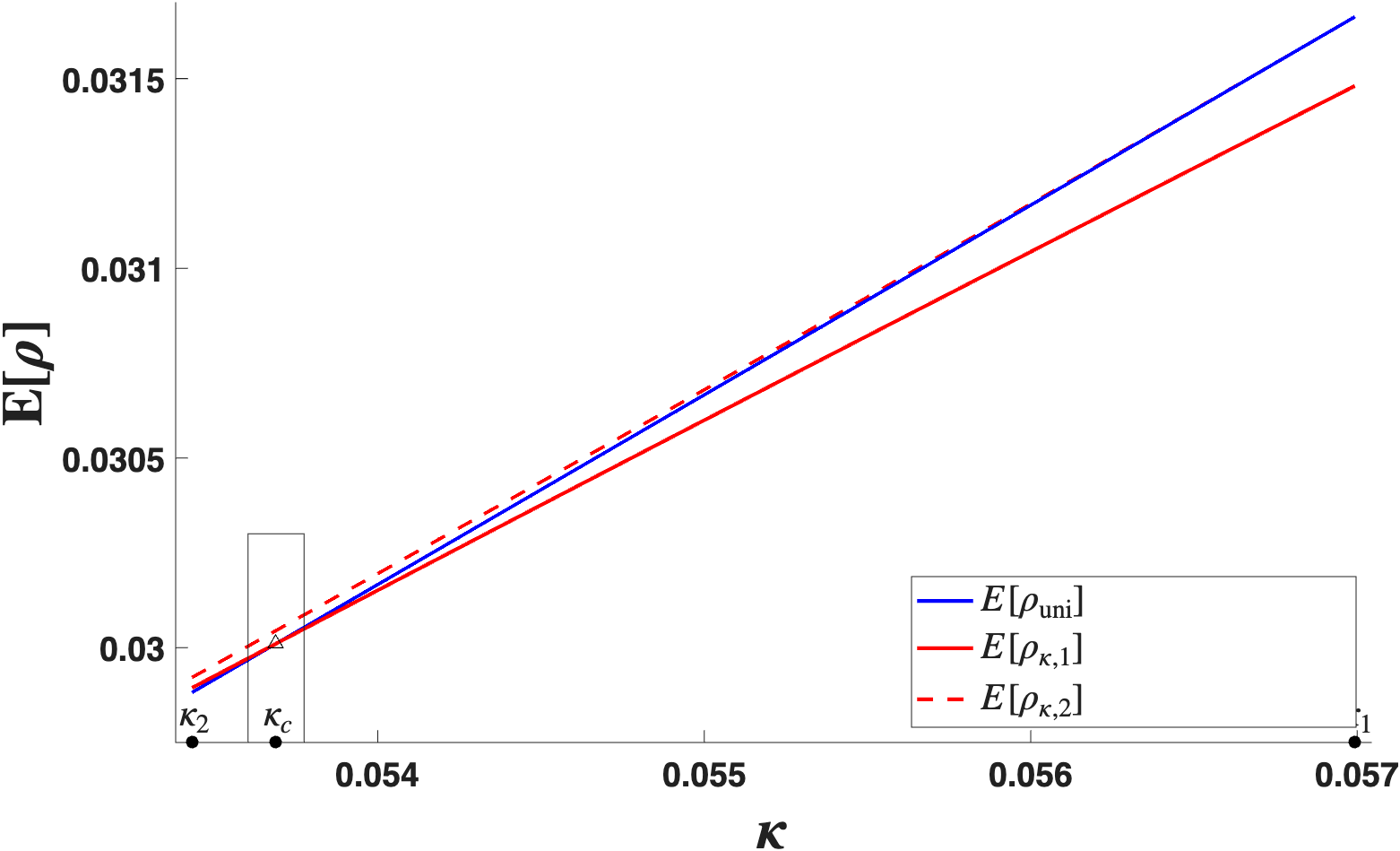} &  \includegraphics[width=0.45\textwidth]{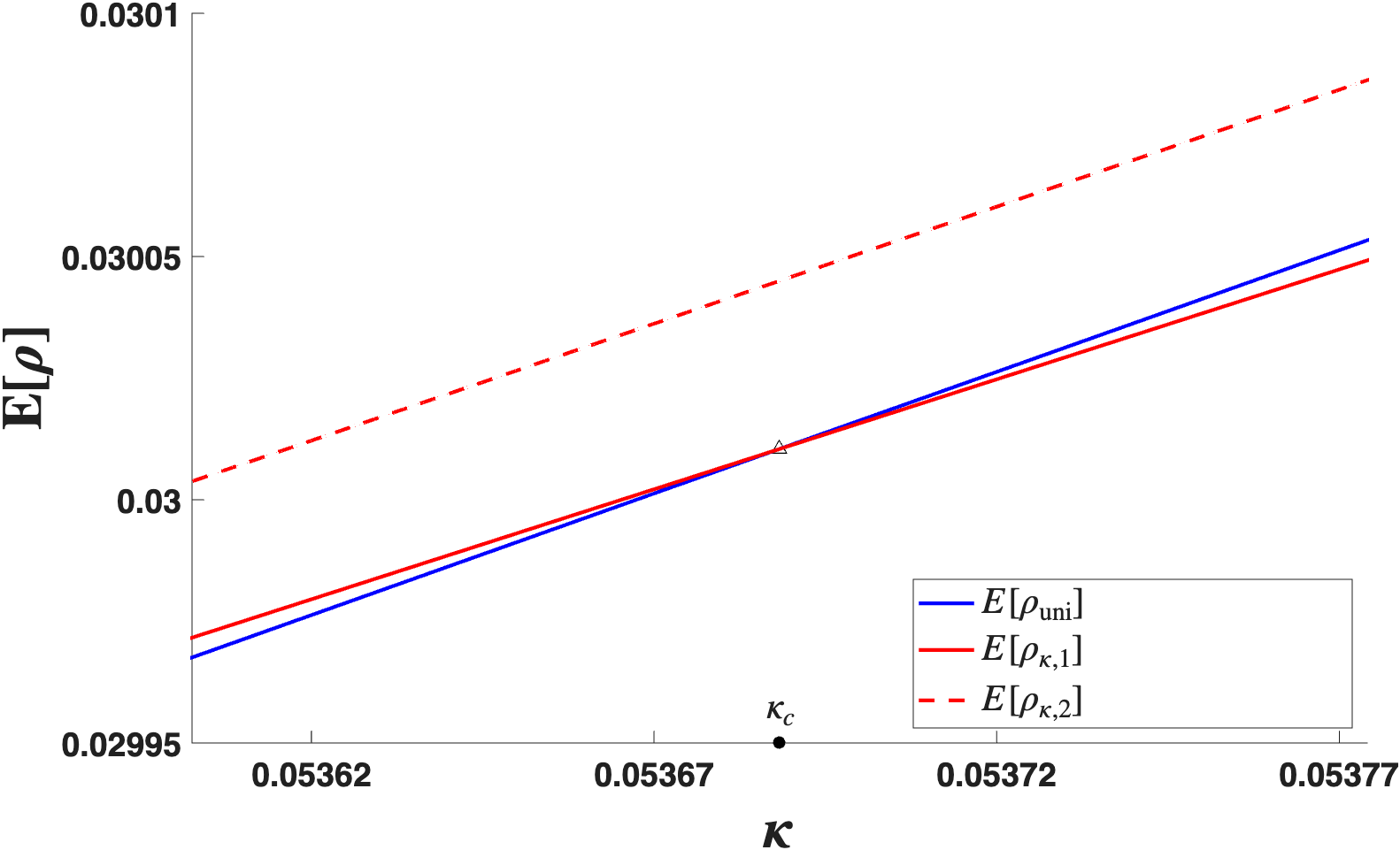} \\
 (b) $\kappa_2<\kappa<\kappa_1$ & (b)'  inset of the rectangle from plot (b) \\[10pt]
 \includegraphics[width=0.5\textwidth]{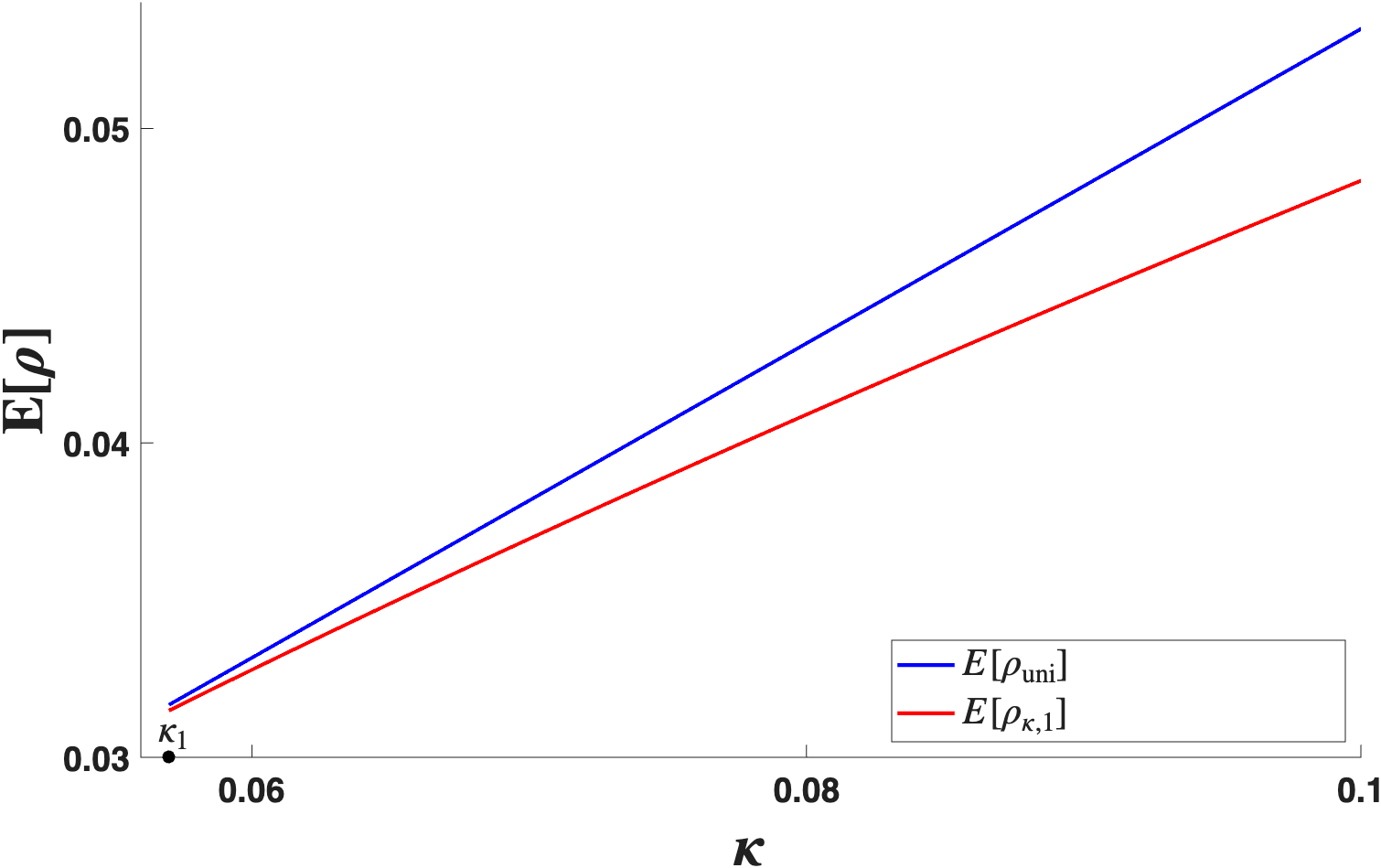} & \\
 (c) $\kappa> \kappa_1$ & 
 %(a) $\kappa_3<\kappa<\kappa_2$ & (b) $\kappa_2<\kappa<\kappa_1$ & (c) $\kappa> \kappa_1$
\end{tabular}
\caption{Case $\dm =2$, $m>2$. Energies of the various equilibria for different ranges of $\kappa$. (a) As $\kappa$ increases from $\kappa_3$ to $\kappa_2$, the energy difference $E[\rhou]-E[\rho_{\kappa,1}]$ increases, starting from a negative value. Also, the energy difference $E[\rho_{\kappa,2}]-E[\rho_{\kappa,1}]$ increases from $0$ to a positive value. (b) The energy differences $E[\rhou]-E[\rho_{\kappa,1}]$ and $E[\rho_{\kappa,2}]-E[\rho_{\kappa,1}]$ continue to increase as $\kappa$ ranges from $\kappa_2$ to $\kappa_1$. There is a fourth critical value of $\kappa$, $\kappa_c \in (\kappa_3,\kappa_1)$, such that $E[\rhou]-E[\rho_{\kappa,1}]$ changes sign from negative to positive there -- this transition, indicated by a triangle, can be better observed in the inset from plot (b)'. (c) The energy difference $E[\rhou]-E[\rho_{\kappa,1}]$ increases indefinitely as $\kappa \to \infty$. The global minimizers are: $\rhou$ for $\kappa_3<\kappa<\kappa_c$, and $\rho_{\kappa,1}$ for $\kappa>\kappa_c$. The simulations are for $m=3$.}
\label{fig:m3-energy}
\end{center}
\end{figure}

%%%%%%%%%%%%%%%

\appendix
\section{Supporting result for Lemma \ref{lem:H-monotone}}
\label{appendix:H-monotone}
\begin{lemma}\label{sign-of-functionalH}
For any given real number $m>1$ and natural number $\dm$, define the following function on $\xi>1$:
\begin{equation}
\label{eqn:Hmd}    
\begin{aligned}
\mathcal{H}_{m, \dm}(\xi)=&\left(\int_0^\pi (\xi+\cos\theta)^{\frac{1}{m-1}-1}\sin^{\dm-1}\theta \d\theta\right)\left(
\int_0^\pi (\xi+\cos\theta)^{\frac{1}{m-1}-1}\sin^{\dm+1}\theta\d\theta
\right)\\[3pt]
&\quad - \left(\int_0^\pi (\xi+\cos\theta)^{\frac{1}{m-1}-2}\sin^{\dm+1}\theta\d\theta\right)\left(\int_0^\pi (\xi+\cos\theta)^{\frac{1}{m-1}}\sin^{\dm-1}\theta \d\theta\right).
\end{aligned}
\end{equation}
Then, we distinguish the following cases:
\begin{enumerate}
\item if $\frac{1}{m-1}+\frac{\dm-1}{2}<0$ then $\mathcal{H}_{m, \dm}(\xi)>0$ for all $\xi>1$, \\[-5pt]
\item if $\frac{1}{m-1}+\frac{\dm-1}{2}=0$ then $\mathcal{H}_{m, \dm}(\xi)=0$ for all $\xi>1$, and\\[-5pt]
\item if $\frac{1}{m-1}+\frac{\dm-1}{2}>0$ then $\mathcal{H}_{m, \dm}(\xi)<0$ for all $\xi>1$.
\end{enumerate}
In other words, we have
\[
\mathrm{sgn}(\mathcal{H}_{m,\dm}(\xi))=-\mathrm{sgn}\left(\frac{1}{m-1}+\frac{\dm-1}{2}\right), \qquad \text{ for all }\xi>1.
\]
\end{lemma}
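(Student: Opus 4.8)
The plan is to convert the four integrals defining $\mathcal{H}_{m,\dm}$ into associated Legendre functions of the first kind with a single index, compress $\mathcal{H}_{m,\dm}$ into a binary quadratic form via the Legendre recurrence, and then pin down the sign of that form using Chebyshev's integral inequality together with a few explicit evaluations. Throughout write $\alpha=\tfrac{1}{m-1}$ and $\mu=\tfrac{\dm-1}{2}$; since $m>1$ forces $\alpha+\mu=\tfrac{1}{m-1}+\tfrac{\dm-1}{2}>0$, only case (3) is nonvacuous and the claim reduces to $\mathcal{H}_{m,\dm}(\xi)<0$ for all $\xi>1$ (cases (1)--(2) are the $\alpha+\mu<0$ and $\alpha+\mu=0$ specializations of the same computation, with $\alpha$ treated as a free real). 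First I would symmetrize $\mathcal{H}_{m,\dm}$ as a double integral over $(\theta,\phi)\in(0,\pi)^2$; its kernel collapses through the algebraic identity $\xi(\cos\theta+\cos\phi)+1+\cos\theta\cos\phi=(\xi+\cos\theta)(\xi+\cos\phi)-(\xi^2-1)$, and after replacing $\sin^{\dm+1}$ by $\sin^{\dm-1}(1-\cos^2)$ and writing $\cos\theta=(\xi+\cos\theta)-\xi$, one gets
\[
\mathcal{H}_{m,\dm}(\xi)=(1-\xi^2)\bigl(J_{-1}^2-J_0J_{-2}\bigr)+\bigl(J_0^2-J_{-1}J_1\bigr),\qquad J_j:=\int_0^\pi(\xi+\cos\theta)^{\alpha+j}\sin^{\dm-1}\theta\,\d\theta,
\]
so $(J_j)_{j\in\bbz}$ is a two-sided moment sequence of the positive measure $(\xi+\cos\theta)^{\alpha}\sin^{\dm-1}\theta\,\d\theta$.

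Next, the substitution $\xi+\cos\theta=\sqrt{\xi^2-1}\,(z+\sqrt{z^2-1}\cos\theta)$ with $z:=\xi/\sqrt{\xi^2-1}>1$ matches the classical Legendre integral representation and gives $J_j=c\,(\xi^2-1)^{j/2}\,P^{-\mu}_{\nu_0+j}(z)$, where $\nu_0:=\mu+\alpha$ and $c>0$ is independent of $j$. Substituting and eliminating $P^{-\mu}_{\nu_0-2}$ and $P^{-\mu}_{\nu_0+1}$ with the degree recurrence $(\nu+\mu+1)P^{-\mu}_{\nu+1}=(2\nu+1)zP^{-\mu}_{\nu}-(\nu-\mu)P^{-\mu}_{\nu-1}$ reduces $\mathcal{H}_{m,\dm}$ to a quadratic form in $P_0:=P^{-\mu}_{\nu_0}(z)$ and $P_{-1}:=P^{-\mu}_{\nu_0-1}(z)$. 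Writing $r:=P_0/P_{-1}$ (both are positive for $z>1$, directly from the integral representation) and $z_\pm:=z\pm\sqrt{z^2-1}$ (so $z_+z_-=1$, $z_++z_-=2z$), regrouping the quadratic form $(\dm+\alpha)r^2-2z(\mu+\alpha)r+(\alpha-1)$ as $2(\mu+1)r(r-z)+(\alpha-1)(r-z_-)(r-z_+)$ yields the compact identity
\[
\mathcal{H}_{m,\dm}(\xi)=-\,\frac{\dm\,c^2\,P_{-1}^2}{(\alpha-1)(\dm+\alpha)}\Bigl[\,2(\mu+1)\,r(r-z)+(\alpha-1)(r-z_-)(r-z_+)\Bigr]
\]
(valid for $\alpha\neq1$; the case $\alpha=1$ will follow by continuity).

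Since $r=\mathbb{E}_{\sigma_{\alpha-1}}[w]$, where $w:=z+\sqrt{z^2-1}\cos\phi$ and $\sigma_s$ is the probability measure with density proportional to $w^{s}\sin^{2\mu}\phi$, one has $r-z=\sqrt{z^2-1}\,t$ and $(r-z_-)(r-z_+)=(z^2-1)(t^2-1)$ with $t:=\mathbb{E}_{\sigma_{\alpha-1}}[\cos\phi]\in(-1,1)$; using $z/\sqrt{z^2-1}=\xi$, the bracket above equals $(z^2-1)\,\beta(t)$ with $\beta(t):=(\dm+\alpha)t^2+2(\mu+1)\xi\,t-(\alpha-1)$. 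An integration by parts in $\phi$ (the boundary terms vanish because $\sin^{2\mu+1}\phi\to0$ at $0,\pi$) shows $t=(\alpha-1)\,\tau(\alpha)$ with $\tau(\alpha)>0$ a ratio of positive integrals, hence $\beta(t)=(\alpha-1)\bigl[(\dm+\alpha)(\alpha-1)\tau(\alpha)^2+2(\mu+1)\xi\,\tau(\alpha)-1\bigr]$, and therefore
\[
\mathrm{sgn}\,\mathcal{H}_{m,\dm}(\xi)=-\,\mathrm{sgn}\Bigl[(\dm+\alpha)(\alpha-1)\tau(\alpha)^2+2(\mu+1)\xi\,\tau(\alpha)-1\Bigr],
\]
since $\dm,c^2,P_{-1}^2,z^2-1,\dm+\alpha$ are all positive. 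So the whole problem comes down to showing the bracketed quantity is positive for every $\alpha>0$.

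For the final step, observe that positivity of the bracket is equivalent to $t(\alpha)$ lying on the correct side of the unique root $t^{\ast}(\alpha)\in(-1,1)$ of the upward parabola $\beta$: here I would record the fixed-sign endpoint values $\beta(\pm1)=(\dm+1)(1\pm\xi)$ and $\beta(0)=1-\alpha$, which already force $t^{\ast}(\alpha)$ to have the sign of $\alpha-1$ and give $t^{\ast}(1)=0=t(1)$. This is where Chebyshev's integral inequality enters: $\cos\phi$ and $\ln w$ are comonotone, so $\alpha\mapsto t(\alpha)=\mathbb{E}_{\sigma_{\alpha-1}}[\cos\phi]$ has derivative $\mathrm{Cov}_{\sigma_{\alpha-1}}(\cos\phi,\ln w)\ge0$, i.e.\ $t$ is nondecreasing and vanishes exactly at $\alpha=1$; combining this monotonicity with a Chebyshev/rearrangement lower bound on $\tau(\alpha)$ (which controls how fast $t(\alpha)$ moves away from $0$, so that $t(\alpha)$ overtakes $t^{\ast}(\alpha)$ on the correct side) yields the sign $+1$, hence $\mathcal{H}_{m,\dm}(\xi)<0$; the borderline $\alpha=1$ ($m=2$) then follows by continuity of $\mathcal{H}_{m,\dm}$ in $\alpha$, and cases (1)--(2) drop out of the same quadratic-form bookkeeping once the signs of $\dm+\alpha$ and $\alpha+\mu$ are tracked. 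I expect this last quantitative comparison of $t(\alpha)$ and $t^{\ast}(\alpha)$ to be the main obstacle --- both depend on $\alpha$ and coincide at $\alpha=1$, so a soft monotonicity argument does not suffice and one needs an effective lower bound on $\tau$ (equivalently, on $t'$ near $\alpha=1$) --- whereas the earlier steps are routine manipulations of standard integral representations and three-term recurrences.
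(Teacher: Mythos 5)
Your reductions up to the quadratic form are correct --- I checked that $\mathcal{H}_{m,\dm}(\xi)=(1-\xi^2)(J_{-1}^2-J_0J_{-2})+(J_0^2-J_{-1}J_1)$, that $J_j=c\,(\xi^2-1)^{j/2}P^{-\mu}_{\nu_0+j}(z)$, and that the three-term recurrence collapses everything to $-\frac{\dm c^2P_{-1}^2}{(\alpha-1)(\dm+\alpha)}\bigl[(\dm+\alpha)r^2-(2\alpha+\dm-1)zr+(\alpha-1)\bigr]$, which matches your regrouping; the integration by parts giving $t=(\alpha-1)\tau(\alpha)$ with $\tau>0$ is also right. (You are also correct that under the stated hypotheses $\frac{1}{m-1}+\frac{\dm-1}{2}>0$ always, so only case (3) is nonvacuous.) But the proof is not complete: everything hinges on the single scalar inequality $(\dm+\alpha)(\alpha-1)\tau^2+2(\mu+1)\xi\tau-1>0$, and you do not prove it --- you yourself flag it as ``the main obstacle.'' The tools you name for it do not suffice. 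Comonotonicity of $\cos\phi$ and $\ln w$ only gives $\operatorname{sgn} t=\operatorname{sgn}(\alpha-1)$, which is qualitative; and for $\alpha<1$ the leading coefficient of the quadratic in $\tau$ is negative, so positivity requires $\tau$ to lie \emph{between} two $\alpha$- and $\xi$-dependent roots, i.e.\ you need both a lower and an upper bound on the integral ratio $\tau$. A crude bound such as $w^{-1}\ge z_-$ already fails to close the case $\alpha=1$ (it gives $(d+1)\xi\tau\ge zz_-<1$), so the needed estimate is genuinely sharp and nothing in the proposal supplies it. As written, the argument reduces the lemma to an equivalent inequality rather than proving it.

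The idea you are missing is a \emph{second} integral representation. The paper takes the same Legendre-function route but, instead of invoking the recurrence, uses the representation $P^\nu_\mu(\cosh\beta)=\frac{\sqrt{2}\sinh^\nu\beta}{\sqrt{\pi}\,\Gamma(\frac12-\nu)}\int_0^\beta \cosh\bigl((\mu+\tfrac12)t\bigr)\,(\cosh\beta-\cosh t)^{-\nu-\frac12}\,\d t$ to rewrite the Wronskian-type combination $P_{\mu_0}^{\nu_0+1}P_{\mu_0+1}^{\nu_0}-P_{\mu_0}^{\nu_0}P_{\mu_0+1}^{\nu_0+1}$ exactly in the form $\int f\,\d\zeta\int g\,\d\zeta-\int fg\,\d\zeta\int\d\zeta$ with $f(t)=\cosh\beta-\cosh t$ decreasing and $g(t)=\cosh\bigl(|A-\tfrac12|t\bigr)/\cosh\bigl(|A+\tfrac12|t\bigr)$, whose monotonicity is governed by the sign of $A=\frac{1}{m-1}+\frac{\dm-1}{2}$. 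The integral Chebyshev inequality then gives the sign in one stroke, with no quantitative estimate required. If you want to salvage your route, you would need to prove the two-sided bound on $\tau$ directly; otherwise the cleaner path is to replace the recurrence step by this second representation.
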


\begin{proof}
 From Section 3.666, equation (2) of \cite{gradshteyn2014table}, we have:
\[
\int_0^\pi (\cosh \beta+\sinh\beta \cos \theta)^{\mu+\nu}\sin^{-2\nu}\theta\d \theta=\frac{\sqrt{\pi}}{2^\nu}\sinh^{\nu}\beta~\Gamma\left(\frac{1}{2}-\nu\right)P_\mu^\nu(\cosh \beta)\quad\forall\mathrm{Re}(\nu)<\frac{1}{2},
\]
where $P_\mu^\nu$ denotes the associated Legendre function of the first kind. By factoring out $\sinh \beta$ from the l.h.s., the equation can be written as
\begin{equation}
\label{eqn:int-Legendre}
\int_0^\pi (\coth \beta+ \cos \theta)^{\mu+\nu}\sin^{-2\nu}\theta\d \theta=\frac{\sqrt{\pi}}{2^\nu}\sinh^{-\mu}\beta~\Gamma\left(\frac{1}{2}-\nu\right)P_\mu^\nu(\cosh \beta).
\end{equation}

Denote $\mu_0=\frac{1}{m-1}-2+\frac{\dm+1}{2}$, $\nu_0=-\frac{\dm+1}{2}$ and substitute $\xi=\coth\beta$ in \eqref{eqn:Hmd}, and use \eqref{eqn:int-Legendre} to simplify each integral in $\mathcal{H}_{m, \dm}(\xi)$ as follows:
\begin{align*}
\begin{cases}
\displaystyle\int_0^\pi (\xi+\cos\theta)^{\frac{1}{m-1}-1}\sin^{\dm-1}\theta \d\theta&= \displaystyle\frac{\sqrt{\pi}}{2^{\nu_0+1}}\sinh^{-\mu_0}\beta~\Gamma\left(\frac{1}{2}-(\nu_0+1)\right)P_{\mu_0}^{\nu_0+1}(\cosh \beta),
\vspace{0.2cm}
\\
\displaystyle\int_0^\pi (\xi+\cos\theta)^{\frac{1}{m-1}-1}\sin^{\dm+1}\theta\d\theta&=\displaystyle\frac{\sqrt{\pi}}{2^{\nu_0}}\sinh^{-(\mu_0+1)}\beta~\Gamma\left(\frac{1}{2}-\nu_0\right)P_{\mu_0+1}^{\nu_0}(\cosh \beta),
\vspace{0.2cm}
\\
\displaystyle\int_0^\pi (\xi+\cos\theta)^{\frac{1}{m-1}-2}\sin^{\dm+1}\theta\d\theta&=\displaystyle\frac{\sqrt{\pi}}{2^{\nu_0}}\sinh^{-\mu_0}\beta~\Gamma\left(\frac{1}{2}-\nu_0\right)P_{\mu_0}^{\nu_0}(\cosh \beta),
\vspace{0.2cm}
\\
\displaystyle\int_0^\pi (\xi+\cos\theta)^{\frac{1}{m-1}}\sin^{\dm-1}\theta \d\theta&=\displaystyle\frac{\sqrt{\pi}}{2^{\nu_0+1}}\sinh^{-(\mu_0+1)}\beta~\Gamma\left(\frac{1}{2}-(\nu_0+1)\right)P_{\mu_0+1}^{\nu_0+1}(\cosh \beta).
\vspace{0.2cm}
\end{cases}
\end{align*}

Hence, we can write $\mathcal{H}_{m, \dm}(\xi)$ as
\begin{align*}
\mathcal{H}_{m, \dm}(\xi)&=\frac{\pi}{2^{2\nu_0+1}}\sinh^{-(2\mu_0+1)}\beta \; \Gamma\left(\frac{1}{2}-\nu_0\right)\Gamma\left(-\frac{1}{2}-\nu_0\right)\\
&\qquad\times\left(P_{\mu_0}^{\nu_0+1}(\cosh \beta) P_{\mu_0+1}^{\nu_0}(\cosh \beta) -P_{\mu_0}^{\nu_0}(\cosh \beta)P_{\mu_0+1}^{\nu_0+1}(\cosh \beta)\right).
\end{align*}

Here, the range of $\beta$ is $(0,\infty)$ since the range of $\xi$ is $(1, \infty)$. Equation (1) in Section 8.715 of \cite{gradshteyn2014table} provides the following integral representation of the associated Legendre function of the first kind:
\[
P^\nu_\mu(\cosh\beta)=\frac{\sqrt{2}\sinh^\nu\beta}{\sqrt{\pi}\Gamma\left(\frac{1}{2}-\nu\right)}\int_0^\beta\frac{\cosh\left(\mu+\frac{1}{2}\right)t }{\left(\cosh \beta-\cosh t\right)^{\nu+\frac{1}{2}}}\d t.
\]

By using the integral expression above, we can rewrite $\mathcal{H}_{m, \dm}(\xi)$ as
\begin{align*}
\mathcal{H}_{m, \dm}(\xi)&=\frac{\pi}{2^{2\nu_0+1}}\sinh^{-(2\mu_0+1)}\beta \; \Gamma\left(\frac{1}{2}-\nu_0\right)\Gamma\left(-\frac{1}{2}-\nu_0\right)\times\frac{2\sinh^{2\nu_0+1}\beta}{\pi\Gamma\left(\frac{1}{2}-\nu_0\right)\Gamma\left(-\frac{1}{2}-\nu_0\right)}\\[3pt]
&\quad \times\bigg(
\int_0^\beta \frac{\cosh\left(\mu_0+\frac{1}{2}\right)t}{\left(\cosh \beta-\cosh t\right)^{\nu_0+\frac{3}{2}}}\d t 
\int_0^\beta \frac{\cosh\left(\mu_0+\frac{3}{2}\right)t}{\left(\cosh \beta-\cosh t\right)^{\nu_0+\frac{1}{2}}}\d t
\\[3pt]
&\qquad-\int_0^\beta \frac{\cosh\left(\mu_0+\frac{1}{2}\right)t}{\left(\cosh \beta-\cosh t\right)^{\nu_0+\frac{1}{2}}}\d t 
\int_0^\beta \frac{\cosh\left(\mu_0+\frac{3}{2}\right)t}{\left(\cosh \beta-\cosh t\right)^{\nu_0+\frac{3}{2}}}\d t
\bigg).
\end{align*}

Finally, we will investigate the sign of
\begin{align*}
\tilde{\mathcal{H}}_{m, \dm}(\xi):=&
\int_0^\beta \frac{\cosh\left(\mu_0+\frac{1}{2}\right)t}{\left(\cosh \beta-\cosh t\right)^{\nu_0+\frac{3}{2}}}\d t \int_0^\beta \frac{\cosh\left(\mu_0+\frac{3}{2}\right)t}{\left(\cosh \beta-\cosh t\right)^{\nu_0+\frac{1}{2}}}\d t
\\[3pt]
&\quad-\int_0^\beta \frac{\cosh\left(\mu_0+\frac{1}{2}\right)t}{\left(\cosh \beta-\cosh t\right)^{\nu_0+\frac{1}{2}}}\d t
\int_0^\beta \frac{\cosh\left(\mu_0+\frac{3}{2}\right)t}{\left(\cosh \beta-\cosh t\right)^{\nu_0+\frac{3}{2}}}\d t,
\end{align*}
for $\beta \in (0,\infty)$.

The sign of 
\[
A:= \mu_0+1 = \frac{1}{m-1}+\frac{\dm-1}{2}
\]
will be crucial in determining the sign of $\tilde{\mathcal{H}}_{m, \dm}(\xi)$ (and hence, of $\mathcal{H}_{m, \dm}(\xi)$). We express $\tilde{\mathcal{H}}_{m, \dm}(\xi)$ using $A$ instead of $\mu_0$ as follows:
\begin{align*}
\tilde{\mathcal{H}}_{m, \dm}(\xi)=&
\int_0^\beta \frac{\cosh\left(A-\frac{1}{2}\right)t}{\left(\cosh \beta-\cosh t\right)^{\nu_0+\frac{3}{2}}}\d t \int_0^\beta \frac{\cosh\left(A+\frac{1}{2}\right)t}{\left(\cosh \beta-\cosh t\right)^{\nu_0+\frac{1}{2}}}\d t \\[3pt]
&\quad-\int_0^\beta \frac{\cosh\left(A-\frac{1}{2}\right)t}{\left(\cosh \beta-\cosh t\right)^{\nu_0+\frac{1}{2}}}\d t
\int_0^\beta \frac{\cosh\left(A+\frac{1}{2}\right)t}{\left(\cosh \beta-\cosh t\right)^{\nu_0+\frac{3}{2}}}\d t\\[3pt]
=& \int_0^\beta \frac{\cosh\left|A-\frac{1}{2}\right|t}{\left(\cosh \beta-\cosh t\right)^{\nu_0+\frac{3}{2}}}\d t \int_0^\beta \frac{\cosh\left|A+\frac{1}{2}\right|t}{\left(\cosh \beta-\cosh t\right)^{\nu_0+\frac{1}{2}}}\d t \\[3pt]
&\quad-\int_0^\beta \frac{\cosh\left|A-\frac{1}{2}\right|t}{\left(\cosh \beta-\cosh t\right)^{\nu_0+\frac{1}{2}}}\d t
\int_0^\beta \frac{\cosh\left|A+\frac{1}{2}\right|t}{\left(\cosh \beta-\cosh t\right)^{\nu_0+\frac{3}{2}}}\d t,
\end{align*}
where in the second equality we used that $\cosh(\cdot)$ is an even function. 

Now define, for $t >0$:
\[
\d \zeta:=\left(\frac{\cosh\left|A+\frac{1}{2}\right|t}{\left(\cosh \beta-\cosh t\right)^{\nu_0+\frac{3}{2}}}\right)\d t, \quad f(t)=\cosh\beta-\cosh t,\quad g(t)=\frac{\cosh\left|A-\frac{1}{2}\right|t}{\cosh\left|A+\frac{1}{2}\right|t}.
\]
Hence, we write
\[
\tilde{\mathcal{H}}_{m, \dm}(\xi)=\int_0^\beta f(t)\d\zeta\int_0^\beta g(t)\d\zeta-\int_0^\beta f(t)g(t)\d\zeta\int_0^\beta \d\zeta.
\]

The function $f$ is decreasing on $t\geq 0$. Also, $g'$ can be calculated from
\[
\frac{g'(t)}{g(t)} =\left|A-\frac{1}{2}\right|\tanh\left|A-\frac{1}{2}\right|t-\left|A+\frac{1}{2}\right|\tanh\left|A+\frac{1}{2}\right|t,
\]
which is positive for $A<0$, negative for $A>0$ and zero for $A=0$. 

The conclusion now follows from the integral form of the Chebyshev inequality \cite[Section 11.31]{gradshteyn2014table} (we account here for the monotonicities of the functions $f$ and $g$). We find: \\[3pt]
\noindent {\em Case 1: $A<0$.} Since $f$ is decreasing and $g$ is increasing, $\mathcal{H}_{m, \dm}(\xi)>0$ for all $\xi>1$.\\[2pt]
\noindent {\em Case 2: $A=0$.} In this case, $g(t)=1$ and $\mathcal{H}_{m, \dm}(\xi)\equiv0$ for all $\xi>1$.\\[2pt]
\noindent {\em Case 3: $A>0$.} Since $f$ and $g$ are both decreasing, $\mathcal{H}_{m, \dm}(\xi)<0$ for all $\xi>1$.

%Therefore, we can finally conclude that (1) if $\frac{1}{m-1}+\frac{\dm-1}{2}=0$ then $\mathcal{H}_{m, \dm}(\xi)\equiv0$ for all $\xi>1$, (2) if $\frac{1}{m-1}+\frac{\dm-1}{2}>0$ then $\mathcal{H}_{m, \dm}(\xi)<0$ for all $\xi>1$, and (3) if $\frac{1}{m-1}+\frac{\dm-1}{2}<0$ then $\mathcal{H}_{m, \dm}(\xi)>0$ for all $\xi>1$.
\end{proof}

%%%%%

\section{Proof of Lemma \ref{lem:Hlim}}
\label{appendix:Hlim}

We show that
\[
\lim_{ \eta\rightarrow \infty}H(-\eta)= \kappa_1^{-1},
\] 
where $\kappa_1$ is given by \eqref{eqn:kappa1} and the function $H$ by \eqref{eqn:H}. Compute
\begin{equation}
\label{eqn:H-meta-1}
\begin{aligned}
H(-\eta) &= \frac{m-1}{m} (\dm w_\dm)^{m-1} \left(\int_0^\pi (\cos\theta+\eta)^{\frac{1}{m-1}}\sin^{\dm-1}\theta \cos\theta\d\theta\right)
\left(\int_0^\pi (\cos\theta
+\eta)^{\frac{1}{m-1}}\sin^{\dm-1}\theta\d\theta\right)^{m-2} \\
%&= \frac{m-1}{m} (\dm w_\dm)^{m-1} x^\frac{1}{m-1}\left(\int_0^\pi \left(\frac{\cos\theta}{x}+1 \right)^{\frac{1}{m-1}}\sin^{\dm-1}\theta \cos\theta\d\theta\right)
%x^{\frac{m-2}{m-1}}\left(\int_0^\pi \left(\frac{\cos\theta}{x}
%+1 \right)^{\frac{1}{m-1}}\sin^{\dm-1}\theta\d\theta\right)^{m-%2}\\
&= \frac{m-1}{m} (\dm w_\dm)^{m-1} \eta
\left(\int_0^\pi \left( \frac{\cos\theta}{\eta}+1 \right)^{\frac{1}{m-1}}\sin^{\dm-1}\theta \cos\theta\d\theta\right)
\left(\int_0^\pi \left(\frac{\cos\theta}{\eta} +1 \right)^{\frac{1}{m-1}}\sin^{\dm-1}\theta\d\theta\right)^{m-2}.
\end{aligned}
\end{equation}

We will use Newton's generalized binomial theorem: for any $r\in \bbr$ and $x \in \bbr$ with $|x|<1$, it holds that
\begin{align*}
    %(\cos\theta+x)^{\frac{1}{m-1}}&= x^{\frac{1}{m-1}}\left(1+ \frac{\cos\theta}{x}\right)^{\frac{1}{m-1}} \\[3pt]
    %&=x^{\frac{1}{m-1}} \sum_{k=0}^\infty
    %\binom{\frac{1}{m-1}}{k}\left(\frac{\cos\theta}{x}\right)^k,
    (1+x)^r = \sum_{k=0}^\infty \binom{r}{k} x^k,
\end{align*}
where $ \binom{r}{k}= \frac{r(r-1)....(r-k+1)}{k!}$, with the convention $\binom{r}{0}=1$. Then, as the binomial series is uniformly convergent, we get from \eqref{eqn:H-meta-1}:
%We can express the integrals as $\sum_0^{\infty}\binom{\frac{1}{m-1}}{n}\frac{1}{x^n} \int_0^\pi \cos^{n+1}\theta \sin^{d-1}\theta \d\theta$    and                  $\sum_0^{\infty}\binom{\frac{1}{m-1}}{n}\frac{1}{x^n} \int_0^\pi \cos^n\theta \sin^{d-1}\theta \d\theta$ as the binomial series is uniformly convergent.
\begin{align}
H(-\eta) &= \frac{m-1}{m} (\dm w_\dm)^{m-1} \eta
\left(\sum_{k=0}^{\infty}\binom{\frac{1}{m-1}}{k}\frac{1}{\eta^k} \int_0^\pi \cos^{k+1}\theta \sin^{d-1}\theta \d\theta\right) \\
&\qquad \times
\left(\sum_{k=0}^{\infty}\binom{\frac{1}{m-1}}{k}\frac{1}{\eta^k} \int_0^\pi \cos^k\theta \sin^{d-1}\theta \d\theta\right)^{m-2}\\
&= \frac{m-1}{m} (\dm w_\dm)^{m-1} 
\left(\sum_{k=1}^{\infty}\binom{\frac{1}{m-1}}{k}\frac{1}{\eta^{k-1}} \int_0^\pi \cos^{k+1}\theta \sin^{d-1}\theta \d\theta\right) \\
&\qquad \times
\left(\sum_{k=0}^{\infty}\binom{\frac{1}{m-1}}{k}\frac{1}{\eta^k} \int_0^\pi \cos^k\theta \sin^{d-1}\theta \d\theta\right)^{m-2},
\end{align}
where for the second equal sign we used that the $k=0$ term in the first binomial sum, vanishes. 

Then, in the limit $\eta$ tends to infinity, we find
\begin{align} 
\lim_{\eta\rightarrow\infty} H(-\eta) &= \frac{m-1}{m} (dw_d)^{m-1} \left(\binom{\frac{1}{m-1}}{1}\int_0^\pi \cos^2\theta\sin^{d-1}\theta \d\theta\right)\left(\binom{\frac{1}{m-1}}{0}\int_0^\pi\sin^{d-1}\theta\d\theta\right)^{m-2}\\
&= \frac{m-1}{m} (dw_d)^{m-1} \frac{1}{m-1} \times \frac{1}{\dm+1} \left(\int_0^\pi\sin^{d-1}\theta\d\theta\right)^{m-1}\\
&= \frac{|\bbs^\dm|^{m-1}}{m(d+1)},
\end{align}
where for the second equal sign we used \eqref{eqn:cossin-rel}. This proves the claim.
%%%%%

\section{Proof of Lemma \ref{lem:F-monotone}}
\label{appendix:F-monotone}

Compute from \eqref{eqn:F}:
\begin{equation}
\label{eqn:dlnF}
\begin{aligned}
%\frac{\d}{\d\phi}\ln F(\phi)
\frac{F'(\phi)}{F(\phi)}=&\frac{\sin\phi}{m-1}\times\frac{\int_0^\phi (\cos\theta-\cos\phi)^{\frac{1}{m-1}-1}\sin^{\dm-1}\theta\cos\theta \d\theta}{\int_0^\phi (\cos\theta-\cos\phi)^{\frac{1}{m-1}}\sin^{\dm-1}\theta \cos\theta \d\theta}\\[3pt]
&+\frac{(m-2)\sin\phi}{m-1}\times\frac{\int_0^\phi (\cos\theta-\cos\phi)^{\frac{1}{m-1}-1}\sin^{\dm-1}\theta \d\theta}{\int_0^\phi (\cos\theta-\cos\phi)^{\frac{1}{m-1}}\sin^{\dm-1}\theta \d\theta}.
\end{aligned}
\end{equation}
Hence, if we show that the following is positive:
\begin{align*}
G(\phi):=&\left(\int_0^\phi (\cos\theta-\cos\phi)^{\frac{1}{m-1}-1}\sin^{\dm-1}\theta\cos\theta \d\theta\right)\left(\int_0^\phi (\cos\theta-\cos\phi)^{\frac{1}{m-1}}\sin^{\dm-1}\theta \d\theta\right)\\
&+(m-2)\left(\int_0^\phi (\cos\theta-\cos\phi)^{\frac{1}{m-1}-1}\sin^{\dm-1}\theta \d\theta\right)\left(\int_0^\phi (\cos\theta-\cos\phi)^{\frac{1}{m-1}}\sin^{\dm-1}\theta \cos\theta \d\theta\right),
\end{align*}
then we get that $F'(\phi)$ is positive, and the conclusion follows.

To simplify $G(\phi)$, we use the following two identities obtained from integration by parts:
\begin{align*}
&\int_0^\phi(\cos\theta-\cos\phi)^{\frac{1}{m-1}-1}\sin^{\dm-1}\theta \cos\theta \d\theta\\
&\quad =\left[\frac{1}{\dm}(\cos\theta-\cos\phi)^{\frac{1}{m-1}-1}\sin^{\dm}\theta\right]_{\theta=0}^{\theta=\phi}+\frac{1}{\dm}\left(\frac{1}{m-1}-1\right)\int_0^\phi(\cos\theta-\cos\phi)^{\frac{1}{m-1}-2}\sin^{\dm+1}\theta\d\theta\\
& \quad =\frac{2-m}{\dm(m-1)}\int_0^\phi(\cos\theta-\cos\phi)^{\frac{1}{m-1}-2}\sin^{\dm+1}\theta\d\theta,
\end{align*}
and
\begin{align*}
&\int_0^\phi(\cos\theta-\cos\phi)^{\frac{1}{m-1}}\sin^{\dm-1}\theta \cos\theta \d\theta\\
& \quad =\left[\frac{1}{\dm}(\cos\theta-\cos\phi)^{\frac{1}{m-1}}\sin^{\dm}\theta\right]_{\theta=0}^{\theta=\phi}+\frac{1}{\dm}\left(\frac{1}{m-1}\right)\int_0^\phi(\cos\theta-\cos\phi)^{\frac{1}{m-1}-1}\sin^{\dm+1}\theta\d\theta\\
&\quad =\frac{1}{\dm(m-1)}\int_0^\phi(\cos\theta-\cos\phi)^{\frac{1}{m-1}-1}\sin^{\dm+1}\theta\d\theta.
\end{align*}
Then, we have
\begin{align}
G(\phi)&=\frac{2-m}{\dm(m-1)}\left(\int_0^\phi(\cos\theta-\cos\phi)^{\frac{1}{m-1}-2}\sin^{\dm+1}\theta\d\theta\right)\left(\int_0^\phi(\cos\theta-\cos\phi)^{\frac{1}{m-1}}\sin^{\dm-1}\theta\d\theta\right)\\
& \quad -\frac{2-m}{\dm(m-1)}\left(\int_0^\phi(\cos\theta-\cos\phi)^{\frac{1}{m-1}-1}\sin^{\dm-1}\theta\d\theta\right)\left(
\int_0^\phi(\cos\theta-\cos\phi)^{\frac{1}{m-1}-1}\sin^{\dm+1}\theta\d\theta
\right).
\end{align}
Since $\frac{2-m}{\dm(m-1)}>0$, we eventually need to show
\begin{equation}
\label{eqn:ineq-G}
\begin{aligned}
&\left(\int_0^\phi(\cos\theta-\cos\phi)^{\frac{1}{m-1}-2}\sin^{\dm+1}\theta\d\theta\right)\left(\int_0^\phi(\cos\theta-\cos\phi)^{\frac{1}{m-1}}\sin^{\dm-1}\theta\d\theta\right)\\
&\quad -\left(\int_0^\phi(\cos\theta-\cos\phi)^{\frac{1}{m-1}-1}\sin^{\dm-1}\theta\d\theta\right)\left(
\int_0^\phi(\cos\theta-\cos\phi)^{\frac{1}{m-1}-1}\sin^{\dm+1}\theta\d\theta
\right)\geq0.
\end{aligned}
\end{equation}

With $\phi \in (0,\pi]$ fixed, write
\begin{align*}
&2\left(\int_0^\phi(\cos\theta-\cos\phi)^{\frac{1}{m-1}-2}\sin^{\dm+1}\theta\d\theta\right)\left(\int_0^\phi(\cos\theta-\cos\phi)^{\frac{1}{m-1}}\sin^{\dm-1}\theta\d\theta\right)\\
&-2\left(\int_0^\phi(\cos\theta-\cos\phi)^{\frac{1}{m-1}-1}\sin^{\dm-1}\theta\d\theta\right)\left(
\int_0^\phi(\cos\theta-\cos\phi)^{\frac{1}{m-1}-1}\sin^{\dm+1}\theta\d\theta\right)\\
& \quad = \int_0^\phi\int_0^\phi(\cos\theta_1-\cos\phi)^{\frac{1}{m-1}-2}\sin^{\dm+1}\theta_1(\cos\theta_2-\cos\phi)^{\frac{1}{m-1}}\sin^{\dm-1}\theta_2\d\theta_1\d\theta_2\\
&\qquad + \int_0^\phi\int_0^\phi(\cos\theta_2-\cos\phi)^{\frac{1}{m-1}-2}\sin^{\dm+1}\theta_2(\cos\theta_1-\cos\phi)^{\frac{1}{m-1}}\sin^{\dm-1}\theta_1\d\theta_1\d\theta_2\\
&\qquad - \int_0^\phi\int_0^\phi(\cos\theta_1-\cos\phi)^{\frac{1}{m-1}-1}\sin^{\dm-1}\theta_1(\cos\theta_2-\cos\phi)^{\frac{1}{m-1}-1}\sin^{\dm+1}\theta_2\d\theta_1\d\theta_2\\
&\qquad -\int_0^\phi\int_0^\phi(\cos\theta_2-\cos\phi)^{\frac{1}{m-1}-1}\sin^{\dm-1}\theta_2(\cos\theta_1-\cos\phi)^{\frac{1}{m-1}-1}\sin^{\dm+1}\theta_1\d\theta_1\d\theta_2\\
&\quad =\int_0^\phi\int_0^\phi(\cos\theta_1-\cos\phi)^{\frac{1}{m-1}-2}(\cos\theta_2-\cos\phi)^{\frac{1}{m-1}-2}\sin^{\dm-1}\theta_1\sin^{\dm-1}\theta_2 \times\left(
(\cos\theta_2-\cos\phi)^2\sin^2\theta_1 \right.\\
&\qquad \left. +(\cos\theta_1-\cos\phi)^2\sin^2\theta_2-(\cos\theta_1-\cos\phi)(\cos\theta_2-\cos\phi)(\sin^2\theta_1+\sin^2\theta_2)
\right)\d\theta_1\d\theta_2.
\end{align*}

Using
\begin{align*}
&(\cos\theta_2-\cos\phi)^2\sin^2\theta_1+(\cos\theta_1-\cos\phi)^2\sin^2\theta_2-(\cos\theta_1-\cos\phi)(\cos\theta_2-\cos\phi)(\sin^2\theta_1+\sin^2\theta_2)\\
&\quad =\sin^2\theta_1(\cos\theta_2-\cos\phi)(\cos\theta_2-\cos\theta_1)+\sin^2\theta_2(\cos\theta_1-\cos\phi)(\cos\theta_1-\cos\theta_2)\\[5pt]
&\quad =(\cos\theta_2-\cos\theta_1)\sin^2\theta_1\sin^2\theta_2\left(\frac{\cos\theta_2-\cos\phi}{\sin^2\theta_2}-\frac{\cos\theta_1-\cos\phi}{\sin^2\theta_1}\right),
\end{align*}
we write the above as
\begin{equation}
\label{eqn:ineq-Gint}
\begin{aligned}
&2\left(\int_0^\phi(\cos\theta-\cos\phi)^{\frac{1}{m-1}-2}\sin^{\dm+1}\theta\d\theta\right)\left(\int_0^\phi(\cos\theta-\cos\phi)^{\frac{1}{m-1}}\sin^{\dm-1}\theta\d\theta\right)\\
&-2\left(\int_0^\phi(\cos\theta-\cos\phi)^{\frac{1}{m-1}-1}\sin^{\dm-1}\theta\d\theta\right)\left(
\int_0^\phi(\cos\theta-\cos\phi)^{\frac{1}{m-1}-1}\sin^{\dm+1}\theta\d\theta\right)\\
=&\int_0^\phi\int_0^\phi(\cos\theta_1-\cos\phi)^{\frac{1}{m-1}-2}(\cos\theta_2-\cos\phi)^{\frac{1}{m-1}-2}\sin^{\dm-1}\theta_1\sin^{\dm-1}\theta_2\\
&\times(\cos\theta_2-\cos\theta_1)\sin^2\theta_1\sin^2\theta_2\left(\frac{\cos\theta_2-\cos\phi}{\sin^2\theta_2}-\frac{\cos\theta_1-\cos\phi}{\sin^2\theta_1}\right)\d\theta_1\d\theta_2.
\end{aligned}
\end{equation}

Note that $\frac{\cos\theta-\cos\phi}{\sin^2\theta}$ is decreasing on $\theta \in (0,\pi)$, as 
\[
\frac{\d}{\d\theta}\left(\frac{\cos\theta-\cos\phi}{\sin^2\theta}\right)=\frac{-\cos^2\theta+2\cos\phi\cos\theta-1}{\sin^3\theta}=-\frac{(\cos\theta-\cos\phi)^2+\sin^2\phi}{\sin^3\theta}\leq0.
\]
Since $\cos\theta$ is also decreasing on $[0,\pi]$, we have
\[
(\cos\theta_2-\cos\theta_1)\left(\frac{\cos\theta_2-\cos\phi}{\sin^2\theta_2}-\frac{\cos\theta_1-\cos\phi}{\sin^2\theta_1}\right)\geq0, \qquad\forall \theta_1,\theta_2\in[0,\phi].
\]
Therefore, from \eqref{eqn:ineq-Gint} we can conclude that \eqref{eqn:ineq-G} holds, which completes the proof.

%%%%%

\section{Calculations in support of Remark \ref{rmk:kappa2}}
\label{appendix:kappa2}

By \eqref{eqn:kappa2}, we have
\begin{equation*}
\kappa_2^{-1}=\frac{m-1}{m}(\dm w_\dm)^{m-1}\left(\int_0^\pi (\cos\theta+1)^{\frac{1}{m-1}}\sin^{\dm-1}\theta \cos\theta\d\theta
\right)
\left(\int_0^\pi (\cos\theta+1)^{\frac{1}{m-1}}\sin^{\dm-1}\theta\d\theta\right)^{m-2},
\end{equation*}

Recall the identity
\[
\int_0^\pi \cos^a\left(\frac{\theta}{2}\right)\sin^b\left(\frac{\theta}{2}\right)\d\theta=\frac{\Gamma\left(\frac{a+1}{2}\right)\Gamma\left(\frac{b+1}{2}\right)}{\Gamma\left(\frac{a+b+2}{2}\right)}.
\]
Using several basic trigonometry formulas 
\[
\cos\theta+1=2\cos^2\frac{\theta}{2}, \quad \sin\theta=2 \sin \left(\frac{\theta}{2}\right)\cos\left(\frac{\theta}{2}\right),\quad \cos\theta=\cos^2\left(\frac{\theta}{2}\right)-\sin^2\left(\frac{\theta}{2}\right),
\]
and the property $\Gamma(z+1)=z \Gamma(z)$ for all $z>0$, we calculate
\begin{align*}
&\int_0^\pi (\cos\theta+1)^{\frac{1}{m-1}}\sin^{\dm-1}\theta \cos\theta\d\theta\\
&=2^{\frac{1}{m-1}}\int_0^\pi \cos^{\frac{2}{m-1}}\left(\frac{\theta}{2}\right)\sin^{\dm-1}\theta \cos\theta\d\theta\\
&=2^{\frac{1}{m-1}+\dm-1}\int_0^\pi \left(\cos^{\frac{2}{m-1}+\dm+1}\left(\frac{\theta}{2}\right)\sin^{\dm-1}\left(\frac{\theta}{2}\right)-\cos^{\frac{2}{m-1}+\dm-1}\left(\frac{\theta}{2}\right)\sin^{\dm+1}\left(\frac{\theta}{2}\right)\right) \d\theta\\
&=2^{\frac{1}{m-1}+\dm-1}\left(\frac{\Gamma\left(\frac{1}{m-1}+\frac{\dm+2}{2}\right)\Gamma\left(\frac{\dm}{2}\right)-\Gamma\left(\frac{1}{m-1}+\frac{\dm}{2}\right)\Gamma\left(\frac{\dm+2}{2}\right)}{\Gamma\left(\frac{1}{m-1}+\dm+1\right)}\right)\\
&=2^{\frac{1}{m-1}+\dm-1}\frac{\Gamma\left(\frac{1}{m-1}+\frac{\dm}{2}\right)\Gamma\left(\frac{\dm}{2}\right)}{\Gamma\left(\frac{1}{m-1}+\dm\right)}\left(\frac{\frac{1}{m-1}+\frac{\dm}{2}-\frac{\dm}{2}}{\frac{1}{m-1}+\dm}\right)\\
&=2^{\frac{1}{m-1}+\dm-1}\frac{\Gamma\left(\frac{1}{m-1}+\frac{\dm}{2}\right)\Gamma\left(\frac{\dm}{2}\right)}{\Gamma\left(\frac{1}{m-1}+\dm\right)}\times\frac{1}{1+\dm(m-1)}.
\end{align*}
Similarly, 
\begin{align*}
&\int_0^\pi (\cos\theta+1)^{\frac{1}{m-1}}\sin^{\dm-1}\theta \d\theta\\
&=2^{\frac{1}{m-1}}\int_0^\pi \cos^{\frac{2}{m-1}}\left(\frac{\theta}{2}\right)\sin^{\dm-1}\theta\d\theta\\
&=2^{\frac{1}{m-1}+\dm-1}\int_0^\pi\cos^{\frac{2}{m-1}+\dm-1}\left(\frac{\theta}{2}\right)\sin^{\dm-1}\left(\frac{\theta}{2}\right) \d\theta\\
&=2^{\frac{1}{m-1}+\dm-1}\frac{\Gamma\left(\frac{1}{m-1}+\frac{\dm}{2}\right)\Gamma\left(\frac{\dm}{2}\right)}{\Gamma\left(\frac{1}{m-1}+\dm\right)}
\end{align*}

Therefore, we can find an explicit expression of $\kappa_2$ in terms of the Gamma function:
\begin{align*}
\kappa_2^{-1}=\frac{m-1}{m}(\dm w_\dm)^{m-1}2^{1+(\dm-1)(m-1)}\left(\frac{\Gamma\left(\frac{1}{m-1}+\frac{\dm}{2}\right)\Gamma\left(\frac{\dm}{2}\right)}{\Gamma\left(\frac{1}{m-1}+\dm\right)}\right)^{m-1}\times\frac{1}{1+\dm(m-1)}.
\end{align*}
Finally, using 
\[
|\bbs^\dm|=\frac{\dm w_\dm\sqrt{\pi}\Gamma\left(\frac{\dm}{2}\right)}{\Gamma\left(\frac{\dm+1}{2}\right)},
\]
one arrives at \eqref{eqn:kappa2-alt}.

%%%%%

\bibliographystyle{abbrv}
\def\url#1{}
\bibliography{lit.bib}

\def\cprime{$'$}
\begin{thebibliography}{10}

\bibitem{AGS2005}
L.~Ambrosio, N.~Gigli, and G.~Savar{\'e}.
\newblock {\em Gradient flows in metric spaces and in the space of probability
  measures}.
\newblock Lectures in Mathematics ETH Z\"urich. Birkh\"auser Verlag, Basel,
  2005.

\bibitem{BuFeHu14}
M.~Burger, R.~C. Fetecau, and Y.~Huang.
\newblock Stationary states and asymptotic behavior of aggregation models with
  nonlinear local repulsion.
\newblock {\em SIAM J. Appl. Dyn. Syst.}, 13(1):397--424, 2014.

\bibitem{CalvezCarrilloHoffmann2017}
V.~Calvez, J.~Carrillo, and F.~Hoffmann.
\newblock Equilibria of homogeneous functionals in the fair-competition regime.
\newblock {\em Nonlinear Analysis}, 159:85--128, 2017.

\bibitem{CarrilloCraigYao2019}
J.~A. Carrillo, K.~Craig, and Y.~Yao.
\newblock Aggregation-diffusion equations: {D}ynamics, asymptotics, and
  singular limits.
\newblock In N.~Bellomo, P.~Degond, and E.~Tadmor, editors, {\em Active
  Particles, Volume 2: Advances in Theory, Models, and Applications}, Modeling
  and Simulation in Science, Engineering and Technology, pages 65--108.
  Birkh\"auser, 2019.

\bibitem{CaDePa2019}
J.~A. Carrillo, M.~Delgadino, and F.~S. Patacchini.
\newblock Existence of ground states for aggregation-diffusion equations.
\newblock {\em Analysis and Applications}, 17(3):393--423, 2019.

\bibitem{CaFeGo2024}
J.~A. Carrillo, A.~Fern\'{a}ndez-Jim\'{e}nez, and D.~G\'{o}mez-Castro.
\newblock Partial mass concentration for fast-diffusions with non-local
  aggregation terms.
\newblock {\em J. Diff. Eqns.}, 409:700--773, 2024.

\bibitem{CaFePa2025a}
J.~A. Carrillo, R.~C. Fetecau, and H.~Park.
\newblock Existence of ground states for free energies on the hyperbolic space.
\newblock {\em Canad. J. Math.}, pages 1--43, 2025.
\newblock Published online doi:10.4153/S0008414X25101272.

\bibitem{CaFePa2025b}
J.~A. Carrillo, R.~C. Fetecau, and H.~Park.
\newblock Global minimizers for fast diffusion versus nonlocal interactions on
  negatively curved manifolds.
\newblock 2025.
\newblock arXiv preprint: https://arxiv.org/abs/2503.19154.

\bibitem{CaHiVoYa2019}
J.~A. Carrillo, S.~Hittmeir, B.~Volzone, and Y.~Yao.
\newblock Nonlinear aggregation-diffusion equations: radial symmetry and long
  time asymptotics.
\newblock {\em Invent. Math.}, 218:889–977, 2019.

\bibitem{CaHoMaVo2018}
J.~A. Carrillo, F.~Hoffmann, E.~Mainini, and B.~Volzone.
\newblock Ground states in the diffusion-dominated regime.
\newblock {\em Calc. Var. Partial Differ. Equ.}, 57:127, 2018.

\bibitem{CaMcVi2006}
J.~A. Carrillo, R.~J. McCann, and C.~Villani.
\newblock Contractions in the 2-{W}asserstein length space and thermalization
  of granular media.
\newblock {\em Arch. Ration. Mech. Anal.}, 179(2):217--263, 2006.

\bibitem{HaChCh2014}
D.~Chi, S.-H. Choi, and S.-Y. Ha.
\newblock Emergent behavior of a holonomic particle system on a sphere.
\newblock {\em J. Math. Phys.}, 55(5):052703, 2014.

\bibitem{ConstantinKevrekidisTiti2004}
P.~Constantin, I.~G. Kevrekidis, and E.~S. Titi.
\newblock Asymptotic states of a {S}moluchowski equation.
\newblock {\em Arch. Rational Mech. Anal.}, 174:365–384, 2004.

\bibitem{DegondFrouvelleLiu2014}
P.~Degond, A.~Frouvelle, and J.-G. Liu.
\newblock A note on phase transitions for the {S}moluchowski equation with
  dipolar potential.
\newblock In F.~Ancona, A.~Bressan, P.~Marcati, and A.~Marson, editors, {\em
  Proceedings of the Fourteenth International Conference on Hyperbolic
  Problems: Theory, Numerics and Application}, volume~8 of {\em AIMS on Applied
  Mathematics}, pages 179--192. AIMS, 2014.

\bibitem{DelgadinoXukaiYao2022}
M.~G. Delgadino, X.~Yan, and Y.~Yao.
\newblock Uniqueness and nonuniqueness of steady states of
  aggregation-diffusion equations.
\newblock {\em Comm. Pure Appl. Math.}, 75(1):3--59, 2022.

\bibitem{fatkullin2005critical}
I.~Fatkullin and V.~Slastikov.
\newblock Critical points of the {O}nsager functional on a sphere.
\newblock {\em Nonlinearity}, 18(6):2565, 2005.

\bibitem{FeHaPa2021}
R.~C. Fetecau, S.-Y. Ha, and H.~Park.
\newblock An intrinsic aggregation model on the special orthogonal group
  {SO}(3): well-posedness and collective behaviours.
\newblock {\em J. Nonlinear Sci.}, 31(5):74, 2021.

\bibitem{FePa2023a}
R.~C. Fetecau and H.~Park.
\newblock Equilibria and energy minimizers for an interaction model on the
  hyperbolic space.
\newblock {\em Physica D}, 446:133670, 2023.

\bibitem{FePa2023b}
R.~C. Fetecau and H.~Park.
\newblock Long-time behaviour of interaction models on {R}iemannian manifolds
  with bounded curvature.
\newblock {\em J. Geom. Anal.}, 33(7):218, 2023.

\bibitem{FePa2024b}
R.~C. Fetecau and H.~Park.
\newblock Aggregation-diffusion energies on {C}artan--{H}adamard manifolds of
  unbounded curvature.
\newblock {\em J. Geom. Anal.}, 34(12):356, 2024.

\bibitem{FePa2024a}
R.~C. Fetecau and H.~Park.
\newblock Ground states for aggregation--diffusion models on
  {C}artan--{H}adamard manifolds.
\newblock {\em J. Lond. Math. Soc.}, 111(2):e70079, 2025.

\bibitem{FePaPa2020}
R.~C. Fetecau, H.~Park, and F.~S. Patacchini.
\newblock Well-posedness and asymptotic behaviour of an aggregation model with
  intrinsic interactions on sphere and other manifolds.
\newblock {\em Analysis and Applications}, 19(6), 2021.

\bibitem{FePa2021}
R.~C. Fetecau and F.~S. Patacchini.
\newblock Well-posedness of an interaction model on {R}iemannian manifolds.
\newblock {\em Comm. Pure Appl. Anal.}, 21(11):3559--3585, 2021.

\bibitem{FeZh2019}
R.~C. Fetecau and B.~Zhang.
\newblock Self-organization on {R}iemannian manifolds.
\newblock {\em J. Geom. Mech.}, 11(3):397–426, 2019.

\bibitem{FrouvelleLiu2012}
A.~Frouvelle and J.-G. Liu.
\newblock Dynamics in a kinetic model of oriented particles with phase
  transition.
\newblock {\em SIAM Journal on Mathematical Analysis}, 44(2):791--826, 2012.

\bibitem{Geshkovski_etal2025}
B.~Geshkovski, C.~Letrouit, Y.~Polyanskiy, and P.~Rigollet.
\newblock A mathematical perspective on transformers.
\newblock {\em Bull. Amer. Math. Soc.}, 62:427--479, 2025.

\bibitem{gradshteyn2014table}
I.~S. Gradshteyn and I.~M. Ryzhik.
\newblock {\em Table of integrals, series, and products}.
\newblock Academic Press, eight edition, 2014.
\newblock Translated from Russian by Scripta Technica, Inc.

\bibitem{Ha-hyperboloid}
S.-Y. Ha, S.~Hwang, D.~Kim, S.-C. Kim, and C.~Min.
\newblock Emergent behaviors of a first-order particle swarm model on the
  hyperboloid.
\newblock {\em J. Math. Phys.}, 61(4):042701, 2020.

\bibitem{HaKaKi2022}
S.-Y. Ha, M.~Kang, and D.~Kim.
\newblock Emergent behaviors of high-dimensional {K}uramoto models on {S}tiefel
  manifolds.
\newblock {\em Automatica}, 136:110072, 2022.

\bibitem{HaKoRy2018}
S.-Y. Ha, D.~Ko, and S.~W. Ryoo.
\newblock On the relaxation dynamics of {L}ohe oscillators on some {R}iemannian
  manifolds.
\newblock {\em J. Stat. Phys.}, 172(5):1427–1478, 2018.

\bibitem{HoPu2006}
D.~D. Holm and V.~Putkaradze.
\newblock Formation of clumps and patches in self-aggregation of finite-size
  particles.
\newblock {\em Physica D.}, 220(2):183--196, 2006.

\bibitem{huilearning}
H.~Huang, J.-G. Liu, and J.~Lu.
\newblock Learning interacting particle systems: diffusion parameter estimation
  for aggregation equations.
\newblock {\em Math. Models Methods Appl. Sci.}, 29(1):1--29, 2019.

\bibitem{Kaib17}
G.~Kaib.
\newblock Stationary states of an aggregation equation with degenerate
  diffusion and bounded attractive potential.
\newblock {\em SIAM J. Math. Anal.}, 49(1):272--296, 2017.

\bibitem{keller1970initiation}
E.~F. Keller and L.~A. Segel.
\newblock Initiation of slime mold aggregation viewed as an instability.
\newblock {\em J. Theor. Biol.}, 26(3):399--415, 1970.

\bibitem{Lohe2009}
M.~Lohe.
\newblock Non-abelian {K}uramoto models and synchronization.
\newblock {\em Journal of Physics A: Mathematical and Theoretical},
  42(39):395101, 2009.

\bibitem{Bongini_etal2017}
M.~H. M.~Bongini, M.~Fornasier and M.~Maggioni.
\newblock Inferring interaction rules from observations of evolutive systems
  {I}: {T}he variational approach.
\newblock {\em Math. Models Methods Appl. Sci.}, 27(5):909--951, 2017.

\bibitem{Maggioni-etal21}
M.~Maggioni, J.~Miller, H.~Qiu, and M.~Zhong.
\newblock Learning interaction kernels for agent systems on {R}iemannian
  manifolds.
\newblock {\em Proc. of the 38th International Conference on Machine Learning},
  139:7290--7300, 2021.

\bibitem{Markdahl_etal2018}
J.~Markdahl, J.~Thunberg, and J.~Gonçalves.
\newblock Almost global consensus on the $n$ -sphere.
\newblock {\em IEEE Transactions on Automatic Control}, 63(6):1664--1675, 2018.

\bibitem{MotschTadmor2014}
S.~Motsch and E.~Tadmor.
\newblock Heterophilious dynamics enhances consensus.
\newblock {\em S{IAM} Review}, 56:577--621, 2014.

\bibitem{OlfatiSaber2006}
R.~Olfati-Saber.
\newblock Swarms on sphere: {A} programmable swarm with synchronous behaviors
  like oscillator networks.
\newblock {\em Proceedings of the 45th IEEE Conference on Decision and
  Control}, pages 5060--5066, 2006.

\bibitem{Onsager1949}
L.~Onsager.
\newblock The effects of shape on the interaction of colloidal particles.
\newblock {\em Ann. N. Y. Acad. Sci.}, 51(4):627--659, 1949.

\bibitem{patlak1953random}
C.~S. Patlak.
\newblock Random walk with persistence and external bias.
\newblock {\em Bull. Math. Biophys.}, 15:311--338, 1953.

\bibitem{RotskoffEijnden2022}
G.~Rotskoff and E.~Vanden-Eijnden.
\newblock Trainability and accuracy of artificial neural networks: {A}n
  interacting particle system approach.
\newblock {\em Comm. Pure Appl. Math.}, 75(9):1889--1935, 2022.

\bibitem{TBL}
C.~M. Topaz, A.~L. Bertozzi, and M.~A. Lewis.
\newblock A nonlocal continuum model for biological aggregation.
\newblock {\em Bull. Math. Bio.}, 68:1601--1623, 2006.

\bibitem{WuSlepcev2015}
L.~Wu and D.~Slep{\v{c}}ev.
\newblock Nonlocal interaction equations in environments with heterogeneities
  and boundaries.
\newblock {\em Comm. Partial Differential Equations}, 40(7):1241--1281, 2015.

\end{thebibliography}

\end{document}